%% LyX 2.0.1 created this file.  For more info, see http://www.lyx.org/.
%% Do not edit unless you really know what you are doing.
\documentclass[oneside,english,reqno]{amsart}
\usepackage[T1]{fontenc}
\usepackage[latin9]{inputenc}
\usepackage{amsthm}
\usepackage{amssymb}
\usepackage{amsmath}
\usepackage{xcolor}
\usepackage[normalem]{ulem}
\usepackage{cancel}
\usepackage{esint}
\usepackage[margin=0.8in]{geometry}
\makeatletter
%%%%%%%%%%%%%%%%%%%%%%%%%%%%%% Textclass specific LaTeX commands.

\usepackage{amsthm,amsmath,amsfonts,amssymb,color}
\usepackage[bookmarks]{hyperref}
\usepackage{mathtools}
%\mathtoolsset{showonlyrefs}
%\usepackage[light, first]{draftcopy}

\numberwithin{equation}{section}
\numberwithin{figure}{section}
\theoremstyle{plain}
\newtheorem{thm}{Theorem}[section]
\newtheorem{prop}[thm]{Proposition}

\newtheorem{lem}[thm]{Lemma}
\newtheorem{cor}[thm]{Corollary}
\newtheorem{claim}[thm]{Claim}
\newtheorem{rem}[thm]{Remark}

  \newcounter{casectr}
  
\theoremstyle{definition}

\theoremstyle{remark}

\newcommand{\RR}{\mathbb{R}}
\newcommand{\ZZ}{\mathbb{Z}}

\newcommand{\RRR}{\mathbb{R}}

\newcommand{\vv}{\upsilon}

\makeatother

\usepackage{fancyhdr}
\pagestyle{fancy}
\fancyhead{}
\fancyhead[C]{Bilinear Estimates}
\fancyhead[R]{\thepage}
\fancyfoot{}
\usepackage{babel}
\providecommand{\casename}{Case}

\begin{document}
\title{On a bilinear Strichartz estimate on irrational tori}

\author{Chenjie Fan $^\ddagger$ $^*$}
\author{Gigliola Staffilani $^\dagger$ $^{*}$}
\author{Hong Wang $^\S$}
\author{Bobby Wilson $^\sharp$}
\thanks{
\quad \\$^{*}$ Chenjie Fan and Gigliola Staffilani are partially supported by NSF    DMS 1362509 and DMS 1462401.\\
\quad \\$^{\ddagger} $Department of Mathematics,  University of Chicago,  5734 S University Ave, 
Chicago, IL 60637,  USA. email:
cjfan@math.uchicago.edu.\\$^{\dagger}$ Department of Mathematics,  Massachusetts Institute of Technology,  77 Massachusetts Ave,  Cambridge,  MA 02139-4307 USA. email:
gigliola@math.mit.edu.\\$^{\S}$Department of Mathematics,  Massachusetts Institute of Technology,  77 Massachusetts Ave,  Cambridge,  MA 02139-4307 USA. email:
hongwang@mit.edu.\\ $^{\sharp}$Department of Mathematics,  Massachusetts Institute of Technology,  77 Massachusetts Ave,  Cambridge,  MA 02139-4307 USA. email:
blwilson@mit.edu.}
\maketitle
\begin{abstract}
We prove a bilinear Strichartz type estimate for irrational tori via a decoupling type argument, \cite{bourgain2014proof}, recovering and generalizing the result of \cite{de2006global}. As a corollary, we derive a global well-posedness result for the  cubic defocusing NLS on two dimensional irrational tori with data of infinite energy. 
\end{abstract}
\section{Introduction}
In  \cite{bourgain2014proof} Bourgain and Demeter proved the full range of Strichartz estimates for the Schr\"odingier equation on tori as a consequence of the $L^2$ decoupling theorem. In this paper we prove in full generality the analog of the improved Strichartz estimate that first appeared in \cite{de2006global} for rational tori.

\subsection{Statement of the problem and main results}

Let $\mathbb{T}=\RR/\ZZ$ be the one dimensional torus, and let  $\alpha_{1},..,\alpha_{d-1} \in [1/2, 1]$,  we define $d$--dimensional torus $\mathbb{T}^d$ as $\mathbb{T}^d = \mathbb{T} \times \alpha_1 \mathbb{T}\times \cdots \times \alpha_{d-1} \mathbb{T}$. We say that the torus is irrational if at least on $\alpha_{i}$ is irrational.  The torus is rational otherwise.   For any $\lambda \geq 1$, we define $\mathbb{T}^d_{\lambda}$ as a rescaling of $\mathbb{T}^d$ by $\lambda$, i.e. $\mathbb{T}^d_{\lambda}=\lambda\mathbb{T}^d=(\lambda \mathbb{T})\times (\alpha_{1}\lambda\mathbb{T})...\times (\alpha_{d-1}\lambda\mathbb{T})$.

When $\lambda \rightarrow \infty$, one should think $\mathbb{T}_{\lambda}$ as a large torus approximating $\RRR^d$. 
We consider the following linear Schr\"odinger equation on $\mathbb{T}_{\lambda}$,  We consider the following Cauchy problem for the linear Schr\"odinger equation on $\mathbb{T}^d_{\lambda}$, 
\begin{equation}\label{lambda}
\begin{cases}
iu_{t}-\Delta u=0, ~ (t,x) \in \RRR\times\mathbb{T}^d_{\lambda} ;\\
u(0,x)=u_{0}, ~  u_{0} \in L^{2}( \mathbb{T}^d_{\lambda} ).
\end{cases}
\end{equation}

Let $U_{\lambda}(t)u_{0}$ be the solution to \eqref{lambda}, and let  $\Lambda_{\lambda}:=\frac{1}{\lambda}(\ZZ \times\frac{1}{\alpha_1} \ZZ\times\cdots\times \frac{1}{\alpha_{d-1}}\ZZ)$.  One has
\begin{equation}
U_{\lambda}(t)u_{0}(x)=\frac{1}{\lambda^{d/2}}\sum_{k\in\Lambda_{\lambda}} e^{2\pi k i x - |2\pi k|^2 it } \widehat{u}_0(k).
\end{equation}

Our main theorem is the following bi-linear refined Strichartz estimate.
\begin{thm}\label{Main Theorem}

Let $\phi_1, \phi_2 \in L^2(\mathbb{T}_{\lambda})$ be two initial data such that $supp \,  \hat{\phi}_i \subset \{k: |k|\sim  N_i\}, i=1,2$, for some large $N_1 \geq N_2$, and let $\eta(t)$ be a time cut-off function, $supp \, \eta \subset [0,1]$. Then 

when $d=2$, 
\begin{equation}\label{main estimate}
\|\eta(t)U_{\lambda}\phi_1 \cdot \eta(t) U_{\lambda}\phi_2\|_{L^2_{x,t}} \lesssim N_2^{\epsilon} \left(\frac{1}{\lambda}+\frac{N_2}{N_1}\right)^{1/2} \|\phi_1\|_{L^2}
 \|\phi_2\|_{L^2},
\end{equation}

when $d\geq 3$
\begin{equation}\label{mainestimatehigh}
\|\eta(t)U_{\lambda}\phi_1 \cdot \eta(t) U_{\lambda}\phi_2\|_{L^2_{x,t}} \lesssim N_2^{\epsilon} \left(\frac{N_{2}^{d-3}}{\lambda}+\frac{N_2^{d-1}}{N_1}\right)^{1/2} \|\phi_1\|_{L^2} \|\phi_2\|_{L^2}.
\end{equation}
\end{thm}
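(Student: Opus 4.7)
My plan is to reduce the bilinear $L^2$ estimate to a lattice-point counting problem via Plancherel, and then to control the count by applying $L^2$ decoupling for the paraboloid from \cite{bourgain2014proof}. First I would expand $\eta(t)U_\lambda\phi_1\cdot\eta(t)U_\lambda\phi_2$ in Fourier series in $x$, take the Fourier transform in $t$, and apply Plancherel in $(x,t)$. Since $\widehat{\eta^2}$ is Schwartz, the time frequency $\tau$ is concentrated near $|k|^2+|\xi-k|^2$, where $\xi=k_1+k_2$ and $k=k_1$. A Cauchy--Schwarz on the inner sum then reduces matters to
\[
\|\eta(t) U_\lambda\phi_1 \cdot \eta(t) U_\lambda\phi_2\|_{L^2_{x,t}}^2 \;\lesssim\; \frac{1}{\lambda^d} \sup_{\xi\in\Lambda_\lambda,\ \tau\in\RRR} N(\xi,\tau)\cdot\|\phi_1\|_{L^2}^2\,\|\phi_2\|_{L^2}^2,
\]
where
\[
N(\xi,\tau) = \#\bigl\{k\in\Lambda_\lambda:\ |k|\sim N_1,\ |\xi-k|\sim N_2,\ \bigl||k|^2+|\xi-k|^2-\tau\bigr|\lesssim 1\bigr\}.
\]

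Next I would analyze $N$ geometrically. Shifting $k = \xi/2+m$ converts the energy constraint into $|m|^2\approx R^2$ for $R\sim N_1$, so $m$ lies on a spherical shell of radius $R$ and thickness $\sim 1/R$; the side conditions $|k|\sim N_1$ and $|\xi-k|\sim N_2$ further restrict $m$ to a spherical cap whose $(d-1)$-area is $\sim N_2^{d-1}$, giving an admissible region of Euclidean volume $\sim N_2^{d-1}/N_1$. The naive volume$\times$density count is $\lambda^d N_2^{d-1}/N_1$, which would yield the second term in the claim. To make this rigorous---and in particular to cover the regime where the shell thickness $1/N_1$ is smaller than the lattice spacing $1/\lambda$, so that the volume heuristic fails---I would cover the cap by smaller caps $\theta$ of diameter $\mu$, apply the $L^2$ paraboloid decoupling of \cite{bourgain2014proof} to the discrete sum organized by $\theta$, and bound the count inside each $\theta$ by an elementary argument. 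Optimizing $\mu$ should yield
\[
N(\xi,\tau)\;\lesssim\; N_2^\epsilon\Bigl(\lambda^{d-1}\max(1, N_2^{d-3}) \;+\; \lambda^d N_2^{d-1}/N_1\Bigr),
\]
which, after dividing by $\lambda^d$, recovers \eqref{main estimate} when $d=2$ and \eqref{mainestimatehigh} when $d\geq 3$.

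The main obstacle will be this decoupling step. For rational tori the analogous counting bound in \cite{de2006global} rests on divisor-type estimates for integer points on spheres, a tool unavailable for irrational $\alpha_i$. Replacing it by $L^2$ decoupling costs the $N_2^\epsilon$ loss; the delicate points are the correct choice of cap scale $\mu$ in the decoupling and the bookkeeping for the transition between the $1/\lambda$ (resp.~$N_2^{d-3}/\lambda$ for $d\geq 3$) and $N_2^{d-1}/N_1$ regimes.
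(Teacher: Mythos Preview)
Your reduction to the supremum count $N(\xi,\tau)$ via Plancherel and Cauchy--Schwarz is correct and standard. The paper, however, takes a different route: rather than bounding $N(\xi,\tau)$ uniformly, it proves a bilinear \emph{decoupling} inequality (Theorem~\ref{bilinear decoupling}) on the extended domain $\Omega=[0,N_1^2]\times[0,(\lambda N_1)^2]^d$, then passes back to the torus by periodicity, using that each cap of radius $1/(\lambda N_1)$ contains at most one lattice point. The bilinear decoupling theorem itself is established through a transversality lemma (decoupling $N_2/N_1$-caps into $(N_2/N_1,\,N_2^2/N_1^2)$-plates with no loss), combined for $d\ge3$ with linear decoupling in dimension $d-1$, and for $d=2$ with parabolic rescaling and an induction on $N_2$.

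The gap in your proposal is the step ``apply $L^2$ paraboloid decoupling to the discrete sum organized by $\theta$ and bound the count inside each $\theta$ elementarily.'' Linear decoupling controls $L^p$ norms of exponential sums, not the uniform lattice-point count $\sup_{\xi,\tau}N(\xi,\tau)$. If you cover the cap by pieces $\theta$ of diameter $\mu$ and bound each piece elementarily, the per-piece count is $\lesssim 1+\lambda\mu$ (once $\mu^2/N_1\le 1/\lambda$), and summing $N_2/\mu$ pieces gives $\lesssim \lambda N_2 + N_2/\mu$, which is too large by a factor $N_2$ in the regime $\lambda\le N_1/N_2$; no choice of $\mu$ fixes this. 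What is missing is precisely the mechanism ensuring that most $\theta$'s contain \emph{no} admissible lattice point---for rational tori this is the divisor bound, and for irrational tori the paper replaces it by the transversality-plus-induction argument that produces bilinear decoupling directly, never via a pointwise count. For $d\ge3$ an approach closer to yours does succeed (see Lemma~3.3 of \cite{killip2016scale}, which the paper cites), but for the mass-critical case $d=2$ you would either need to prove the uniform bound on $N(\xi,\tau)$ directly for irrational lattices---which appears to be as hard as the theorem itself---or drop the Cauchy--Schwarz and decouple the bilinear expression, which leads back to the paper's Theorem~\ref{bilinear decoupling}.
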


We note that
when $d=2, N_1 =N_2$, $\lambda=1$, estimate \eqref{main estimate} recovers the Strichartz inequality for the (irrational) torus after an application of H\"older's inequality, up to an $N_2^\epsilon$--loss.  When $\lambda \rightarrow \infty$, estimate \eqref{main estimate}, \eqref{mainestimatehigh}   consistent with the billinear Strichartz inequality in $\RR^{d+1}$, \cite{bourgain1998refinements}. Up to the $N_2^\epsilon$--loss,  inequality \eqref{main estimate} is sharp. 

Furthermore,
when $\lambda \geq N_1$, the estimates fall into the so-called semiclassical regime in which the geometry of $\mathbb{T}_{\lambda}$ is irrelevant.  We refer to the work of Hani, \cite{hani2012bilinear}, for same estimate (without $N_2^\epsilon$ loss) on general compact manifolds. On the torus, our results improves the estimate in \cite{hani2012bilinear} for $\lambda\leq N_{1}$. Estimate \eqref{main estimate}, \eqref{mainestimatehigh} rely on the geometry of torus and cannot hold on general compact manifolds.

\begin{rem}
It may also be interesting to consider trilinear estimates.  In fact  when one considers the quintic 	nonlinear Schr\"odinger equation as in \cite{herr2011global} and \cite{ionescu2012energy}, trilinear estimates are fundamental. See also \cite{ramos2016trilinear}.
\end{rem}

We will  derive  Theorem \ref{Main Theorem} from some bilinear decoupling type estimates. We first introduce some basic notations.

Let $P$ be the  truncated paraboloid  in $\RR^{d+1}$, 
\begin{equation}
P =\{ (\xi, |\xi|^2): \xi \in \RR^d, |\xi|\lesssim 1\}.
\end{equation}
For any function $f$ supported on $P$, we define 
\begin{equation}\label{def1}
Ef = \widehat{f d\sigma},
\end{equation}
where $\sigma$ is the measure on $P$.

Note a function supported on $P$ can be naturally understood as a function supported on the ball $B=\{\xi\in \RRR^{d}, |\xi|\lesssim 1\}$.

 By a slight abuse of notation,  for a function $f$ supported in the  ball $B$ in $\RR^d$, we also define
 \begin{equation}\label{def2}
  Ef(x,t) = \int_{B} e^{-2\pi i (\xi \cdot x +|\xi|^2 t)}f(\xi) d\xi.
\end{equation}  
One can see that the two definitions of $Ef$ are essentially the same since $P$ projects onto $B$.

We decompose $P$ as a finitely overlapping union of caps $\theta$ of radius $\delta$. Here a cap $\theta$ of radius $\delta$ is  the set $\theta=\{\xi\in P, |\xi-\xi_{0}|\lesssim \delta\}$ for some fixed $\xi_{0}\in P$.   We define $Ef_{\theta} = \widehat{f_{\theta} d\sigma}$, where $f_{\theta}$ is $f$ restricted to $\theta$. We use a similar definition also when  $f$ is a function supported on the unit ball in $\RR^d$. We have $Ef=\sum_{\theta} Ef_{\theta}$. 

Now, we are ready to state our main decoupling type estimate.
\begin{thm}\label{bilinear decoupling}
Given $\lambda \geq 1$, $N_1\geq N_2\geq 1$. 
Let $f_1$ be supported on $P$ where $ |\xi| \sim 1$, and let $f_2$ be supported on $P$ where $ |\xi| \sim \frac{N_2}{N_1}$. Let $\Omega = \{ (t,x)\in [0, N_1^2]\times [0, (\lambda N_1)^2]^d\}$. For a finitely overlapping covering of the ball $B=\{|\xi|\leq 1\}$ of caps $\{\theta\}$, $|\theta|=\frac{1}{\lambda N_1}$, we have the following estimate. For any small $\epsilon >0$, 

\noindent
when $d=2$, 
\begin{equation}
\|Ef_{1} Ef_{2}\|_{L^2_{avg}(w_{\Omega})} \lesssim_{\epsilon} ( N_2)^{\epsilon}\lambda^{d/2} \left(\frac{1}{\lambda}+\frac{N_2^{d-1}}{N_1}\right)^{1/2}\prod_{j=1}^2\left(\sum_{|\theta|=\frac{1}{\lambda N_1}}\|Ef_{j,\theta}\|_{L^4_{avg}(w_{\Omega})}^2\right)^{1/2},
\end{equation}

\noindent
when $d\geq 3$,
\begin{equation}
\|Ef_{1} Ef_{2}\|_{L^2_{avg}(w_{\Omega})} \lesssim_{\epsilon} ( N_2)^{\epsilon}\lambda^{d/2} \left(\frac{N_{2}^{d-3}}{\lambda}+\frac{N_2^{d-1}}{N_1}\right)^{1/2}\prod_{j=1}^2\left(\sum_{|\theta|=\frac{1}{\lambda N_1}}\|Ef_{j,\theta}\|_{L^4_{avg}(w_{\Omega})}^2\right)^{1/2},
\end{equation}
 where $w_{\Omega}$ is a weight adapted to $\Omega$.  

\end{thm}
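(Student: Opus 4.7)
The plan is to prove Theorem~\ref{bilinear decoupling} by combining a per-cap-pair Euclidean bilinear Strichartz estimate with an $L^2$ near-orthogonality in the cap decomposition, and finally invoking the $L^4$ decoupling theorem of \cite{bourgain2014proof} to put the result in the stated square-function form. The two terms $\frac{1}{\lambda}$ and $\frac{N_2^{d-1}}{N_1}$ (and their analogs for $d\geq 3$) will correspond, respectively, to the \emph{semiclassical} regime dictated by the anisotropy of $\Omega = [0,N_1^2]\times[0,(\lambda N_1)^2]^d$ and the \emph{Euclidean} regime governed by the transversality between the two frequency annuli $\{|\xi|\sim 1\}$ and $\{|\xi|\sim N_2/N_1\}$ on the paraboloid.

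Concretely, I would first decompose $f_j=\sum_{\theta_j}f_{j,\theta_j}$ into caps of radius $1/(\lambda N_1)$ and expand
\begin{equation*}
Ef_1\cdot Ef_2=\sum_{\theta_1,\theta_2}Ef_{1,\theta_1}\cdot Ef_{2,\theta_2}.
\end{equation*}
The space-time Fourier support of each summand lies in the Minkowski sum $P_{\theta_1}+P_{\theta_2}$; on our two annuli, the map $\Phi(\xi_1,\xi_2)=(\xi_1+\xi_2,\,|\xi_1|^2+|\xi_2|^2)$ is a submersion from $\RR^{2d}$ to $\RR^{d+1}$ with Jacobian of size $|\xi_1-\xi_2|\sim 1$, so a coarea-type count of fibers should yield an $L^2$ near-orthogonality between cap pairs up to controlled multiplicity. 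For each surviving pair I would apply the Euclidean bilinear Strichartz inequality of \cite{bourgain1998refinements}, or equivalently a direct wave-packet analysis: the wave packets of $Ef_{j,\theta_j}$ are tubes of transverse width $\lambda N_1$ and longitudinal length $(\lambda N_1)^2$, while $\Omega$ has time length only $N_1^2$, and this mismatch is what produces the $\frac{1}{\lambda}$ (resp.\ $\frac{N_2^{d-3}}{\lambda}$) term. The standard transversal tube-intersection count between the two annuli then contributes the complementary $\frac{N_2^{d-1}}{N_1}$ term.

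After summing over the cap pairs and tracking the volume factors coming from the $L^p\leftrightarrow L^p_{avg}$ conversion, which accounts for the $\lambda^{d/2}$ prefactor, a final application of the flat $L^4$ decoupling of \cite{bourgain2014proof} to each cap collection will convert the cap-wise $L^4$ norms into the square-function form on the right-hand side, with the $N_2^\epsilon$ loss arising from the standard dyadic pigeonholing. The main obstacle I anticipate is the careful geometric accounting in the per-cap-pair step: balancing the semiclassical loss against the transversality gain uniformly across the full parameter range $1\leq\lambda$, $1\leq N_2\leq N_1$, and in particular handling the corner case $N_2\sim N_1$ where $|\xi_1-\xi_2|$ may degenerate and the bilinear improvement over the linear Strichartz estimate must be recovered by splitting the paraboloid into angularly separated pieces before running the orthogonality argument.
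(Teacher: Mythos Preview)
Your outline has a genuine gap at the orthogonality step. When you decompose both $f_j$ directly to caps $\theta_j$ of radius $\frac{1}{\lambda N_1}$ and ask for $L^2$ near-orthogonality of the products $Ef_{1,\theta_1}Ef_{2,\theta_2}$, the multiplicity is not $O(1)$. Given a pair $(\theta_1,\theta_2)$, the resonant pairs $(\theta_1',\theta_2')$ are those with $c_{1}'+c_{2}'=c_1+c_2+O(\tfrac{1}{\lambda N_1})$ and $|c_1'|^2+|c_2'|^2=|c_1|^2+|c_2|^2+O(\tfrac{1}{\lambda N_1})$; the first constraint forces $c_1'$ into the $\tfrac{N_2}{N_1}$-ball around $c_1$, and the second puts $c_1'$ on a spherical shell of thickness $\tfrac{1}{\lambda N_1}$, so the count is $\sim(\lambda N_2)^{d-1}$. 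After H\"older on each surviving pair this yields a constant $(\lambda N_2)^{(d-1)/2}$ in front of the square functions, which misses the sharp $\lambda^{(d-1)/2}$ by exactly $N_2^{(d-1)/2}$. The per-pair ``Euclidean bilinear Strichartz'' cannot recover this: at scale $\tfrac{1}{\lambda N_1}$ each $f_{j,\theta_j}$ may be a single exponential (so $|Ef_{j,\theta_j}|\equiv\text{const}$), in which case the only available per-pair bound is H\"older with no transversality gain. The sharpness example with $f_j$ supported on $\{\xi_1=j-1,\ |\xi_2|\le \tfrac{1}{N_1}\}$ in the appendix witnesses exactly this failure. Finally, the ``closing $L^4$ decoupling'' step is misdirected: the right-hand side of the theorem \emph{is} the square function of $\|Ef_{j,\theta}\|_{L^4}$, so there is nothing for linear decoupling to do at that point.

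The paper's argument is organized quite differently. It first uses $L^2$ orthogonality only to localize $f_1$ to a single cap $\tau_1$ of radius $\tfrac{N_2}{N_1}$ (Lemma~\ref{reduce}); this step \emph{does} have bounded multiplicity. The heart of the proof is then a transversality lemma (Lemma~\ref{projection to 2D}) which shows that, between two $\tfrac{N_2}{N_1}$-caps at $|\xi|\sim 1$ and $|\xi|\sim \tfrac{N_2}{N_1}$, one gets \emph{lossless} $L^4$ orthogonality into $(\tfrac{N_2}{N_1},\tfrac{N_2^2}{N_1^2})$-plates in the direction of $\xi_1-\xi_2$. This gains exactly one dimension; the remaining $(d-1)$ directions are handled not by further orthogonality but by genuinely decoupling-type input: for $d=2$ an induction on $N_2$ via parabolic rescaling (Section~\ref{sectionind}), and for $d\ge 3$ an application of $(d-1)$-dimensional linear decoupling from \cite{bourgain2014proof} along each thin plate (Lemma~\ref{base case high}). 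The two terms in the constant then arise from the dichotomy $\lambda\lessgtr N_1/N_2^{d-1}$ tracked through that induction, not from a wave-packet intersection count on the anisotropic box $\Omega$.
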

The presence of weight $w$ in these estimates is standard. We list the basic property of $w$ in Section \ref{subsectionweight}, and one can refer to \cite{bourgain2016study} for more details. The notation $L_{avg}(w_{\Omega})^{2}$ is explained in notation subsection below, subsection \ref{subsectionnota}.

The proof of Theorem \ref{bilinear decoupling}   gives another proof of the linear decoupling theorem in  \cite{bourgain2014proof} in dimension $d=2$, and does not rely on multilinear-Kakeya or multilinear restriction theorems in $\mathbb{R}^{3}$. The proof of Theorem \ref{bilinear decoupling} in dimension $d\geq 3$ relies instead on linear decoupling in $\RRR^{d+1}$, \cite{bourgain2014proof}.

\begin{rem}
The estimate in Theorem \ref{Main Theorem}, Theorem \ref{bilinear decoupling} is sharp up to an $N_{2}^{\epsilon}$.  See Section \ref{sharpness} for examples.
\end{rem}
\begin{rem}
The $N_{2}^{\epsilon}$ loss in Theorem \ref{Main Theorem} is typical if one wants to directly use a decoupling type argument. It may be possible to remove $N_{2}^{\epsilon}$ in the mass supercritical setting,  (in our case, this means $d\geq 3$), using the approach in  \cite{killip2016scale}, where  the scale invariant Strichartz estimates are studied.  
\end{rem}
\begin{rem}
Similar bilinear estimates for dimension $d\geq 3$ were also considered in \cite{killip2016scale} for non-rescaled tori, see Lemma 3.3.
On the other hand in this work we also consider the $d=2$ case which is  mass critical.  
\end{rem}
\subsection{Acknowledgment}
We thank Larry Guth for very helpful discussions during the course of this work.

\subsection{Background and motivation}
The system \eqref{lambda} and the bilinear estimates \eqref{main estimate} and \eqref{mainestimatehigh}  naturally appear in the study of the following nonlinear Schr\"odinger equation on the \textbf{non-rescaled} tori:
\begin{equation}\label{cubicnls}
\begin{cases}
iu_{t}+\Delta u=|u|^{2}u,\\
u(0)=u_{0 }\in H^{s}(\mathbb{T}^{d}).
\end{cases}
\end{equation}

Let us  focus for a moment on the   $d=2$ case. The Cauchy problem is said to be locally well-posed in $H^s(\mathbb{T}^d)$ if for any initial data $u_0 \in H^s(\mathbb{T}^d)$ there exists a time $T=T(\|u_0\|_s)$ such that a unique solution to the initial value problem exists on the time interval $[0, T]$.  We also require that the data to solution map is continuous from $H^s(\mathbb{T}^d)$ to $C^0_tH^s_x( [0,T] \times \mathbb{T}^d)$.  If $T=\infty$,  we say that a Cauchy problem is globally well-posed. 

The initial value problem  \eqref{cubicnls} is locally well-posed for initial data $u_{0}\in H^{s}, s>0$ via Strichartz estimates. Note that using iteration, by the energy conservation law, i.e. 		\begin{align*} 
		 E(u(t))=E(u_{0})=\frac{1}{2}\int |\nabla u|^{2}+\frac{1}{4}\int |u|^{4},
	\end{align*}
all initial data in $H^{1}(\mathbb{T}^{2})$  give rise to  a global solution.
Next, by the nowadays standard I-method, \cite{colliander2002almost}, by considering  a modified version of the energy, in the rational torus case, it was proved in  \cite{de2006global} that   \eqref{cubicnls} is indeed globally well-posedness for initial data in $H^{s}, s>2/3$. The key estimate there was in fact   \eqref{main estimate} for linear solutions on  \textbf{rescaled} tori, which we prove here to be available also for irrational tori.

The proof  for \eqref{main estimate} presented in \cite{de2006global} is only for rational tori since it relies on certain types of counting lemmata that  cannot directly work on irrational tori. One of the main purpose of this work in fact is to extend results on rational tori to irrational ones.

Based on the discussion we just made, as a corollary of Theorem \ref{Main Theorem}, we have
\begin{cor}\label{gwp}
The initial value problem  \eqref{cubicnls}  defined on any torus $\mathbb {T}^{2}$ is globally well-posed for initial data in $H^{s}(\mathbb {T}^{2})$ with $s>2/3$.\end{cor}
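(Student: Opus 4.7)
The plan is to apply the I-method of Colliander-Keel-Staffilani-Takaoka-Tao \cite{colliander2002almost} in the form implemented by De Silva-Pavlovic-Staffilani-Tzirakis \cite{de2006global}. The only step in the latter paper that required the torus to be rational was the proof (via a counting-lemma argument) of the bilinear Strichartz estimate \eqref{main estimate} on the rescaled torus $\mathbb{T}^2_\lambda$; Theorem \ref{Main Theorem} removes this obstruction, so that the rest of their scheme should transfer verbatim to the irrational setting.

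More concretely, first I would recall local well-posedness of \eqref{cubicnls} in $H^s(\mathbb{T}^2)$, $s>0$, which follows from the linear Strichartz estimates of \cite{bourgain2014proof} available on any torus. Next, given $u_0 \in H^s(\mathbb{T}^2)$ and a large parameter $N$, I would rescale by setting $u_0^\lambda(x) = \lambda^{-1}u_0(x/\lambda)$ on $\mathbb{T}^2_\lambda$ and choosing $\lambda \sim N^{(1-s)/s}$ so that the smoothed initial datum $I u_0^\lambda$ has $H^1(\mathbb{T}^2_\lambda)$-norm of order $1$, where $I = I_N$ is the standard Fourier multiplier, equal to the identity on $\{|k| \le N\}$ and decaying like $(N/|k|)^{1-s}$ for $|k| \gg N$. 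Write $u^\lambda$ for the corresponding rescaled solution on $\mathbb{T}^2_\lambda$.

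The heart of the matter is an almost-conservation estimate for the modified energy $E(I u^\lambda)$. Differentiating in time and using the equation produces, after a Littlewood--Paley decomposition, quadrilinear frequency-localised expressions involving the commutator $[I,|u|^2]u$. Rewriting each such expression as an $L^2_{t,x}$ pairing of two bilinear objects and applying \eqref{main estimate} to each factor, I expect the new gain $(\lambda^{-1} + N_2/N_1)^{1/2}$ (replacing the bare $(N_2/N_1)^{1/2}$ one would obtain on $\mathbb{R}^2$) to be exactly what is needed to conclude $|E(I u^\lambda(1)) - E(I u^\lambda(0))| \lesssim N^{-1+\epsilon}$ on a unit time step.

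Iterating this bound over $\sim N^{1-\epsilon}$ consecutive unit intervals keeps the modified energy bounded, covering time $\sim N^{1-\epsilon}$ on $\mathbb{T}^2_\lambda$, equivalently time $\sim N^{1-\epsilon}/\lambda^2$ on $\mathbb{T}^2$. Requiring this to diverge as $N \to \infty$ gives the threshold $s > 2/3$, in line with \cite{de2006global}. The main obstacle would have been producing the $\lambda^{-1/2}$ gain of \eqref{main estimate} on an \emph{irrational} rescaled torus; Theorem \ref{Main Theorem} supplies exactly this, after which the argument of \cite{de2006global} proceeds without modification.
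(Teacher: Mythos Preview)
Your proposal is correct and follows exactly the approach the paper takes: the paper simply observes that the argument of \cite{de2006global} goes through verbatim once the bilinear Strichartz estimate \eqref{main estimate} is available on irrational rescaled tori, which is precisely what Theorem~\ref{Main Theorem} supplies. In fact you have spelled out the rescaling, the $I$-operator, the almost-conservation step, and the iteration in more detail than the paper itself, which treats the corollary as an immediate consequence of Theorem~\ref{Main Theorem} together with \cite{de2006global}.
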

\begin{rem}
Results  such as  Corollary \ref{gwp} usually also give a control on the  growth of Sobolev norms of the global solutions. We do not address this particular question here. We instead refer the reader to the recent work \cite{Deng2016growth}.
\end{rem}

The original  Strichartz estimates needed to prove  the local well-posedness of Cauchy problems such as \eqref{cubicnls} were first obtained in \cite{bourgain1993fourier} via number theoretical  related counting arguments for rational tori. Recently, the striking proof of the $L^{2}$ decoupling Theorem, \cite{bourgain2014proof}, provided a completely different approach from which   all the desired Strichartz estimates on tori, both rational and irrational, follow. This approach in particular does not depend on counting lattice points. See also the work \cite{guo2014strichartz} and \cite{Deng2016str}.
The  method of proof we implement in this present  work is mostly inspired by \cite{bourgain2014proof} and the techniques used to prove the $L^{2}$ decoupling Theorem.

We quickly recall the main result in \cite{bourgain2014proof}. Let $P$ be a unit parabola in $\mathbb{R}^{d+1}$,  covered by finitely overlapping caps $\theta$ of  radius $\frac{1}{R}$. Let $f$ be a function defined on $P$, then one has for any $\epsilon>0$ small,
\begin{equation}\label{decouple}
\|Ef\|_{L^{p}(w_{B_{R^{2}}})}\lesssim_{\epsilon}  R^{\epsilon} (R^{2})^{d/4-\frac{d+2}{2p}}\left(\sum_{\theta} \|Ef_{\theta}\|_{L^{p}(B_{R^{2}})}^{2}\right)^{1/2}, \quad p\geq \frac{2(d+2)}{d}.
\end{equation}
Note that \eqref{decouple} corresponds to  Theorem 1.1  in \cite{bourgain2014proof}, and  the dimension $n$ in the estimate (2) there corresponds to  our $d+1$.
{Also note that the linear decoupling \eqref{decouple}  not only works for  those $f$ exactly supported on $P$, but those $f$ supported in a $R^{-2}$ neighborhood of $P$, and in this case, cap $\theta$ would be replaced by the $R^{-2}$ neighborhood of the original $\theta$, see Theorem 1.1 in \cite{bourgain2014proof}. }

 We remark that one key feature of this decoupling type estimate is that one needs to work on a larger scale in physical space, i.e. the scale  $R^{2}$ rather than $R$, in order to observe  the decoupling phenomena. The proper observational scale dictated by Heisenberg's uncertainty principle is $R$.

Indeed, one principle, which is usually called {\it parallel decoupling}, indicates that if decoupling happens in a small region, then decoupling happens in a large region as well. We state a bilinear version the parallel decoupling below.

\begin{lem}[\cite{bourgain2014proof}, \cite{bourgain2016study}]\label{parallel}
Let $D$ be a domain, and $D=D_{1}\cup D_{2}...\cup D_{J}$, $D_{i}\cap D_{j}=\emptyset$. If for some constant $A>0$ and  for function $h_{1}, h_{2},$ defined on the unit parabola, one has 
\begin{equation}
\|Eh_{1} Eh_{2}\|_{L_{avg}^2(w_{D_{i}})}\leq  A\prod_{j=1}^2\left(\sum_{|\theta|=\frac{1}{\lambda N_1}}\|Eh_{j,\theta}\|_{L^4_{avg}(w_{D_{i}})}^2\right)^{1/2} \, \,  i=1,\dots,J,
\end{equation}
then one also has
\begin{equation}
\|Eh_{1} Eh_{2}\|_{L_{avg}^2(w_{D})}\leq  A\prod_{j=1}^2\left(\sum_{|\theta|=\frac{1}{\lambda N_1}}\|Eh_{j,\theta}\|_{L^4_{avg}(w_{D})}^2\right)^{1/2}.
\end{equation}
\end{lem}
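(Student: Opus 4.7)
The plan is to run a straightforward averaging argument: decompose the $L^2_{avg}(w_D)$ norm on the union into a weighted combination of the corresponding norms on the pieces, apply the hypothesis piecewise, and then reassemble the right-hand side using Cauchy--Schwarz in $i$ followed by Minkowski in $\theta$. The one piece of ``physics'' is the comparison of weights, and after that it is pure convexity.

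First I would unpack the weight. Because the $D_i$ partition $D$ and each $w_{D_i}$ is a standard weight adapted to $D_i$ (with rapidly decaying tails, see Section \ref{subsectionweight} and \cite{bourgain2016study}), one has the essentially pointwise bound $w_D \lesssim \sum_i w_{D_i}$. Setting $a_\theta^{i,j} := \|Eh_{j,\theta}\|_{L^4_{avg}(w_{D_i})}^2$ and $a_\theta^{D,j} := \|Eh_{j,\theta}\|_{L^4_{avg}(w_{D})}^2$, and using $|D|=\sum_i |D_i|$, the weight bound yields
\begin{equation*}
\|Eh_1 Eh_2\|_{L^2_{avg}(w_D)}^2 \;\lesssim\; \sum_i \frac{|D_i|}{|D|}\,\|Eh_1 Eh_2\|_{L^2_{avg}(w_{D_i})}^2,
\qquad (a_\theta^{D,j})^2 \;=\; \sum_i \frac{|D_i|}{|D|}\,(a_\theta^{i,j})^2.
\end{equation*}

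Next I would apply the local hypothesis on each $D_i$, which gives
$\|Eh_1 Eh_2\|_{L^2_{avg}(w_{D_i})}^2 \leq A^2\bigl(\sum_\theta a_\theta^{i,1}\bigr)\bigl(\sum_\theta a_\theta^{i,2}\bigr)$, and then apply Cauchy--Schwarz in $i$ with respect to the probability measure $\mu_i:=|D_i|/|D|$ to separate the two factors:
\begin{equation*}
\sum_i \mu_i \Bigl(\sum_\theta a_\theta^{i,1}\Bigr)\Bigl(\sum_\theta a_\theta^{i,2}\Bigr)
\;\leq\; \prod_{j=1}^{2} \Bigl(\sum_i \mu_i \Bigl(\sum_\theta a_\theta^{i,j}\Bigr)^{2}\Bigr)^{1/2}.
\end{equation*}
Finally I would use Minkowski's inequality for the weighted $\ell^2_i(\mu)$ norm to pull the $\theta$-sum outside, namely
\begin{equation*}
\Bigl(\sum_i \mu_i \Bigl(\sum_\theta a_\theta^{i,j}\Bigr)^{2}\Bigr)^{1/2}
\;\leq\; \sum_\theta \Bigl(\sum_i \mu_i (a_\theta^{i,j})^{2}\Bigr)^{1/2}
\;=\; \sum_\theta a_\theta^{D,j},
\end{equation*}
which after taking square roots delivers exactly the claimed bound
$A\prod_j \bigl(\sum_\theta \|Eh_{j,\theta}\|_{L^4_{avg}(w_D)}^2\bigr)^{1/2}$.

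The only nontrivial step is the initial weight comparison $w_D \lesssim \sum_i w_{D_i}$: the weights are not characteristic functions but Schwartz-type bumps centered on the $D_i$, so one has to absorb overlapping Schwartz tails into an implicit constant. Apart from that, the two applications of Cauchy--Schwarz/Minkowski are formal, and the structure of the proof is independent of the particular caps $|\theta|=\tfrac{1}{\lambda N_1}$, the exponents $2$ and $4$, or the bilinear nature of the inequality, so the same template would also prove a linear parallel decoupling statement.
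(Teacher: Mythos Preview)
Your argument is correct and is precisely the fleshed-out version of what the paper asserts: the authors only write ``The proof of this particular formulation of parallel decoupling follows by Minkowski's inequality'' without giving details, and your Cauchy--Schwarz in $i$ followed by Minkowski in $\theta$ (together with the weight comparison $w_D \sim \sum_i w_{D_i}$, which is standard in the paper's flexible weight convention) is exactly how one makes that sentence rigorous in the bilinear setting.
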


The proof of this particular formulation of parallel decoupling follows by Minkowski's inequality.

As it exists, parallel decoupling is a principle rather than a concrete lemma. We state the version here solely for concreteness. It should be easy to generalize the lemma under different conditions.

\subsection{Notation}\label{subsectionnota}

We write $A\lesssim B$ if $A\leq CB$, for a constant $C>0$, $A\sim B$ if both $A\lesssim B$ and $B\lesssim A$. We say $A\lesssim_{\epsilon} B$ if the constant $C$ depends on $\epsilon$. Similarly for $A\sim_{\epsilon} B$.  For a Borel set, $E\subset \mathbb{R}^d$, we denote that diameter of $E$ by $|E|$ and the Lebesgue measure of $E$ by $m(E)$.

We will use the usual function space $L^{p}$. We also use a (weighted) average version of $L^{p}$ space, i.e 
	\begin{align*}
		\|g\|_{L^p_{avg}(A)} =\left(\fint_A |g|^p\right)^{1/p}:= \left(\frac{1}{m(A)}\int_A|g|^p \right)^{1/p}
	\end{align*}
 and 
	\begin{align*}
		\|g\|_{L^p_{avg}(w_A)} =\left(\frac{1}{m(A)}\int|g|^p w_A \right)^{1/p},
	\end{align*}
where $w_A$ is a weight function described below.

For any function $f$, we use $\hat{f}$ to denote its Fourier transform.  When we say {\it unit ball}, we refer to a ball of radius  $r\sim 1$.  
We will often identify a torus as a bounded domain in Euclidean space, for example, we will view $(\mathbb{R}/\mathbb{Z})^{d}$ as $[0,1]^{d}\subset  \RRR^{d}$. 
In this work, $\Omega$ is used to denote the domain $[0,N_{1}^{2}]\times [0,(\lambda N_{1})^{2}]^{d} \subset \RRR^{d+1}$.

\subsection{  The weight $w_A$}\label{subsectionweight}

 If  $h$ is a Schwartz function whose Fourier transform, $\hat{h}$, is supported  in a ball of radius $1/R$, we expect $h$ be essentially constant on balls of radius $R$, and morally
	\begin{align}\label{intuitive}
	\|h\|_{L_{avg}^{p}(B_{R})} \sim \|h\|_{L^2_{avg}(B_{R})}\sim \|h\|_{L^{\infty}(B_{R})}.
	\end{align}

Expression \eqref{intuitive} is not rigorous, and the introduction of the weight $w_{B_{R}}$ is a standard way to overcome this technical difficulty.  We refer to Lemma 4.1 in \cite{bourgain2016study} for more detailed discussion of the weight function.

% Typically, $w_{B_{1/R}}$ is defined as $w_{B_{1/R}}=\chi_{B_{1/R}} * \varphi_R$ where $\hat{\varphi}_R(\xi)=1$ for $\xi \in B_R(0)$,  $\mbox{supp }\hat{\varphi}_R \subset B_{2R}(0)$ and $\hat{\varphi}_R \in C^{\infty}(\mathbb{R}^d)$. 
%If $\hat{h}$ is supported in a ball, $B_{1/R}$, by Bernstein's inequality,
%	\begin{align}\label{lcp}
%		\|h\|_{L_{avg}^{p}(w_{B_{1/R}})} \sim \|h\|_{L^2_{avg}(w_{B_{1/R}})}\sim \|h\|_{L^{\infty}_{avg}(w_{B_{1/R}})}
%	\end{align}
For any bounded open convex set $A$, the weight function $w_{A}$, might change from line to line, from the left hand side of the inequality to the right hand side, satisfies the same properties: 

\begin{itemize}
\item $\int w_{A} \sim m(A)$.
\item $w_{A}\gtrsim 1$ on $A$, and rapidly (polynomial type) decay outside $A$.
\end{itemize}

We will usually define $A$ to be a ball, or the product of balls in this paper. 

Furthermore, let $B_R$ be a ball centered at 0, and let $\mu_{B_{R}}$ be a function such that $\widehat{\mu_{B_{R}}}$ is about $\frac{1}{m(B_{1/R})}$ on $B_{1/R}$, and supported in $B_{2/R}$, then $\mu_{B_{R}}$ is about $1$ on $B_{R}$, decays faster than any polynomial outside of $B_{R}$. $\mu_{B_R}^2$ is positive, decays faster than any polynomial outside of $B_R$ and fourier supported in $B_{4/R}$, We take translations $B'$ of $B_R$ to cover the whole space, we note $\mu_{B'}$ as the corresponding translation of $\mu_{B_R}$ and  $w_{B_R}(B')=\max_{x\in B'} w_{B_R}$, we have the following useful property,
\begin{equation}\label{weight}
w_{B_R}(x)\leq \sum_{B'} w_{B_{R}}(B')1_{B'}(x) \lesssim \sum_{B'} w_{B_{R}}(B')\mu_{B'}^2(x)\lesssim w_{B_R}(x).
\end{equation}
The last inequality follows from the fact that $\mu_{B'}^2$ decays faster than any polynomial outside of $B'$. 

\begin{lem}\label{L infinity}
For a function $f$ supported in $B_{1/R}$, for any $p< \infty$, 
$$\|Ef\|_{L^{\infty}(B_{R})} \lesssim \|Ef\|_{L^p_{avg}(\mu_{B_{R}})}.$$ 
\end{lem}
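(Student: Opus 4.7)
The plan is to exploit the tight frequency localization of $Ef$: since $f$ is supported in $B_{1/R}\subset\RR^d$, the space-time function $Ef$ has its $(x,t)$ Fourier transform supported on the portion $\{(\xi,|\xi|^2):\xi\in B_{1/R}\}$ of the paraboloid, a set of diameter $O(1/R)$ in $\RR^{d+1}$. The lemma is then a Bernstein--Nikolskii type inequality for such a frequency-localized function, which I would prove via a reproducing kernel and H\"older's inequality.

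First I would set up the reproducing kernel. Pick a Schwartz function $\psi$ on $\RR^{d+1}$ with $\widehat{\psi}$ supported in a slightly enlarged $O(1/R)$ neighborhood of the Fourier support of $Ef$ and with $\widehat{\psi}\equiv 1$ on that support. (Modulating $Ef$ by a unimodular phase, which preserves both sides of the inequality, one may assume this support is centered near the origin of $\RR^{d+1}$.) This yields the pointwise identity $Ef = Ef*\psi$, with $\|\psi\|_{L^1(\RR^{d+1})}\lesssim 1$ and $|\psi(y)|\lesssim m(B_R)^{-1}(1+|y|/R)^{-N}$ for any preassigned $N$. Choosing $N$ strictly larger than the polynomial decay rate of $\mu_{B_R}$ and using $1+|y|/R\leq 2(1+|y-x|/R)$ for $|x|\leq R$, one obtains the uniform bound
\begin{equation*}
|\psi(x-y)|\lesssim \frac{1}{m(B_R)}\,\mu_{B_R}(y),\qquad x\in B_R,\ y\in\RR^{d+1}.
\end{equation*}

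Second, for $p\geq 1$ and $x\in B_R$, starting from $|Ef(x)|\leq \int|Ef(y)|\,|\psi(x-y)|\,dy$ and applying H\"older's inequality with the splitting $|\psi|=|\psi|^{1/p}|\psi|^{1/p'}$ together with $\|\psi\|_{L^1}\lesssim 1$ gives
\begin{equation*}
|Ef(x)|^{p}\lesssim \int|Ef(y)|^{p}\,|\psi(x-y)|\,dy\lesssim \frac{1}{m(B_R)}\int|Ef(y)|^{p}\,\mu_{B_R}(y)\,dy=\|Ef\|_{L^p_{avg}(\mu_{B_R})}^{p},
\end{equation*}
and taking the $p$-th root followed by the supremum over $x\in B_R$ completes the proof. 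For $0<p<1$ H\"older in this form fails, and I would instead invoke the Peetre maximal function $\mathcal{M}_{p}Ef(x):=\sup_{y}|Ef(y)|(1+|x-y|/R)^{-(d+1)/p}$, which by the reproducing formula satisfies $\mathcal{M}_{p}Ef(x)^{p}\lesssim(|Ef|^{p}*|\psi|)(x)$ provided the decay exponent of $\psi$ exceeds $(d+1)/p$, and which dominates $|Ef(x)|$ pointwise; combining this with the Step~1 bound on $|\psi(x-\cdot)|$ recovers the same inequality.

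The main obstacle, mild but not entirely trivial, is the uniform pointwise domination $|\psi(x-y)|\lesssim m(B_R)^{-1}\mu_{B_R}(y)$ for all $x\in B_R$ established in Step~1: this is the only place where the particular scale $R$ of the ball on the left-hand side enters, and it requires matching the Schwartz tails of $\psi$ and $\mu_{B_R}$ by choosing $N$ strictly larger than the decay rate of $\mu_{B_R}$, plus an application of the triangle inequality to pass between bumps centered at different points of $B_R$. Everything else is the standard reproducing-kernel-plus-H\"older calculation underlying Bernstein--Nikolskii inequalities.
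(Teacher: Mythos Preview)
Your overall strategy---a Bernstein--Nikolskii argument via a reproducing kernel and H\"older---is the right one, and in fact the paper does not prove this lemma at all: it simply cites Corollary~4.3 of \cite{bourgain2016study}. So you are supplying more than the paper does.

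There is, however, a real gap in your Step~1. You want the pointwise domination
\[
|\psi(x-y)|\lesssim m(B_R)^{-1}\,\mu_{B_R}(y)\qquad(x\in B_R),
\]
and you justify it by ``choosing $N$ strictly larger than the polynomial decay rate of $\mu_{B_R}$.'' But in this paper $\mu_{B_R}$ is \emph{not} one of the polynomial-decay weights $w_{B_R}$: by construction $\widehat{\mu_{B_R}}$ is compactly supported, so $\mu_{B_R}$ is (the restriction of) an entire function which decays faster than every polynomial and will typically have zeros. At any zero of $\mu_{B_R}$ the desired bound forces $\psi\equiv 0$ on a ball of radius $R$, hence $\psi\equiv 0$ by analyticity; and even away from zeros no finite $N$ can make $(1+|y|/R)^{-N}\lesssim \mu_{B_R}(y)$ hold uniformly. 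You seem to have conflated $\mu_{B_R}$ with the generic weights $w_{B_R}$; for those your argument would be fine as written.

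The repair is not to compare $\psi$ with $\mu_{B_R}$ pointwise but to use only that $\mu_{B_R}\gtrsim 1$ on $B_R$. One clean route (for $p\ge 1$): set $g:=Ef\cdot\mu_{B_R}$, which has Fourier support in a ball of radius $O(1/R)$ and lies in $L^p(\RR^{d+1})$ since $\mu_{B_R}$ is Schwartz. Global Bernstein--Nikolskii gives $\|g\|_\infty^p\lesssim m(B_R)^{-1}\int|g|^p$. On $B_R$ one has $|Ef|\sim|g|$, while $\mu_{B_R}^p\lesssim\mu_{B_R}$ since $\mu_{B_R}$ is bounded; combining these yields the lemma. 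For $p<1$ one can run the same idea with the Peetre maximal function you already invoke, or simply note that the paper only ever uses the case $p=4$.
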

We refer the proof to Corollary 4.3 in \cite{bourgain2016study} with the weight on the left hand side being $1_{B_R}$ so that on the right hand side we have a fast decay weight. 
{\begin{rem}
In general, Lemma~\ref{L infinity} should hold for any convex set $A$ and the dual convex body $A^*$.
\end{rem}
}

\section{Proof of Theorem \ref{Main Theorem} assuming Theorem \ref{bilinear decoupling} }
Assume Theorem~\ref{bilinear decoupling} , let us prove Theorem~\ref{Main Theorem}.  The argument  below  comes from the proof of  discrete restriction and Strichartz estimate on irrational tori assuming the $L^{2}$ decoupling estimate, see Theorem 2.2, Theorem 2.3 in  \cite{bourgain2014proof}. The argument originally comes  as observation due to Bourgain \cite{bourgain2013moment}. We record it here for completeness.

Let $\phi_{1}, \phi_{2}$ be as in Theorem \ref{Main Theorem}. We   rescale $\phi_{1}$ to be supported in the unit ball and rescale $\phi_{2}$ to be supported in a ball of radius $\sim\frac{N_2}{N_1}$.
Recall,
	\begin{equation}
		 U_{\lambda}(t)\phi_{j}(x,t) = \frac{1}{\lambda^{d/2}} \sum_{k\in \Lambda_{\lambda} , k\sim N_1}e^{2\pi i k\cdot x -|2\pi k |^2 t}\widehat{\phi}_{j}(k).
	\end{equation}

We perform a  change of variables $\xi=\frac{k}{N_1}$ and we 
let
	\begin{equation}
		 h_j(\tau) = \frac{1}{\lambda^{d/2}}\sum_{\xi \in \Lambda_{\lambda N_1}, |\xi|\sim 1} \widehat{\phi}_{j}(\xi N_1) \delta_{\xi}(\tau),\qquad j=1,2.
	\end{equation}

Note one can directly check that
	\begin{equation}
		 U_{\lambda}(t)\phi_{j} (x,t) = Eh_{j}(-2\pi N_1 x, (2\pi)^2 N_1^2 t).
	\end{equation}

Without loss of generality, we {suppress}  the constants $-2\pi$ and $(2\pi)^2$.  

 Let $Q_0=[0, N_{1}^2]\times { \mathbb{T}^d_{\lambda N_1}} $ and   let us view ${\mathbb{T}^d_{\lambda N_1}}$  as a compact set in $\RR^d$. In particular, one can construct the associated weight function $w_{Q_{0}}$.
 Direct computation (via change of variables) gives
	\begin{equation}\label{firstchangeofvariable}
		\|U_{\lambda}(t)\phi_1) U_{\lambda}(t)\phi_2\|_{L^2([0,1]\times \mathbb{T}^d_{\lambda})} \sim N_{1}^{-\frac{d+2}{2}} { m(Q_0)^{1/2}}\|Eh_{1} Eh_{2}\|_{L_{avg}^2(Q_{0})}
	\end{equation}
and due to the  the periodicity of $Eh_{i}, i=1,2$, one has 
	\begin{equation}
		\|Eh_{1} Eh_{2}\|_{L_{avg}^{2}(\Omega)}=\|Eh_{1} Eh_{2}\|_{L_{avg}^{2}(Q_{0})}.
	\end{equation}

For a covering $\{\theta\}$ of caps of radius $\frac{1}{\lambda N_1}$, each cap $\theta$ contains at most one $\xi_{\theta} \in \Lambda_{\lambda N_1}$, corresponding to $k_{\theta} =N_1 \xi_{\theta} \in \Lambda_{\lambda}$, then 
	\begin{align*}
		\|Eh_{j, \theta}\|_{L^4_{avg}(w_{Q_0})} \sim h_{j}(\xi_{\theta}) \sim \frac{1}{\lambda^d}\widehat{\phi}_{j}(k_{\theta})
	\end{align*}
and 
	\begin{align*}
		\prod_{j=1}^2\left(\sum_{|\theta|=\frac{1}{\lambda N_1}} \|Eh_{j}\|_{L^{4}_{avg}(w_{Q_0})}^2\right)^{1/2} & \sim \lambda^{-d} \prod_{j=1}^2\left(\frac{1}{\lambda^d}\sum_{k\in\Lambda_{\lambda}}|\widehat{\phi}_j(k)|^2\right)^{1/2}\\
		&\sim \lambda^{-d} \|\phi_1\|_{L^2}\|\phi_2\|_{L^2}.
	\end{align*}
For convenience of notation let 
	\begin{equation}\label{notationconvience}
		 D_{\lambda, N_{1},N_{2}}:=
		\begin{cases}
			\frac{1}{\lambda}+\frac{N_{2}}{N_{1}}, \text{ when } d=2,\\
			\frac{N_{2}^{d-3}}{\lambda}+\frac{N_{2}^{d-1}}{N_{1}}, \text{  when } d\geq 3.
		\end{cases}
	\end{equation}

Recall that  $\Omega=[0,N_{1}]^{2}\times [0,(\lambda N_{1})^{2}]^{d}$, we apply Theorem \ref{bilinear decoupling} with $f_{j}= h_{j}$, and we have 
	\begin{equation}\label{Omega domain}
		\|Eh_{1} Eh_{2}\|_{L_{avg}^2(w_{\Omega})} \lesssim_{\epsilon} (N_2)^{\epsilon}\lambda^{d/2} D_{\lambda, N_{1}, N_{2}}^{1/2}\prod_{j=1}^2\left(\sum_{|\theta|=\frac{1}{\lambda N_1}}\|Eh_{j,\theta}\|_{L^4_{avg}(w_{\Omega})}^2\right)^{1/2}.
	\end{equation}
Note that $\Omega$ can be covered   by ${Q}$ such that $\{Q\}$ are finitely overlapping and each $Q$ is a translation of $Q_{0}$. 
Since $Eh_{j}$ are periodic on $x$, estimate \eqref{Omega domain} is equivalent to 
	\begin{equation}\label{domain}
		\|Eh_{1} Eh_{2}\|_{L^2_{avg}(w_{Q_0})} \lesssim_{\epsilon} ( N_2)^{\epsilon}\lambda^{d/2}D_{\lambda, N_{1}, N_{2}}^{1/2}\prod_{j=1}^2\left(\sum_{|\theta|=\frac{1}{\lambda N_1}}\|Eh_{j,\theta}\|_{L^4(w_{Q_0})}^2\right)^{1/2}.
	\end{equation}

Plugging  \eqref{domain} into \eqref{firstchangeofvariable} gives
	\begin{align*}
		&\|U_{\lambda}(t)\phi_1 U_{\lambda}(t)\phi_2\|_{L^2([0,1]\times \mathbb{T}^d_{\lambda})}\\& \lesssim N_1^{-\frac{d+2}{2}}\cdot N_{1} { m(\mathbb{T}^d_{\lambda N_1})^{1/2}}\lambda^{-d}\cdot (N_2)^{\epsilon}\lambda^{d/2}D_{\lambda, N_{1}, N_{2}}^{1/2}\|\phi_1\|_{L^2}\|\phi_2\|_{L^2}\\
		&\sim ( N_2)^{\epsilon}D_{\lambda, N_{1}, N_{2}}^{1/2}\|\phi_1\|_{L^2}\|\phi_2\|_{L^2}
	\end{align*}
and Theorem \ref{Main Theorem} follows.
\medskip

The rest of the paper details the proof of Theorem \ref{bilinear decoupling}.

\section{An overview of the proof of Theorem \ref{bilinear decoupling}}

First, we reduce the proof of Theorem \ref{bilinear decoupling} to the following proposition.
\begin{prop}\label{transversal}
Let $\tau_1$ be a cap of radius $\frac{N_2}{N_1}$ supported at $\xi$ and $|\xi|\sim 1$. Let $\tau_2$ be a cap of radius $\frac{N_2}{N_1}$ supported at $\xi$ with $|\xi|\sim \frac{N_2}{N_1}$. Let $f_{j}$ be a function supported in $\tau_j$, then for any small $\epsilon >0$, 

\noindent
when $d=2$
\begin{equation}
\|Ef_1 Ef_2\|_{L^2_{avg}(w_{\Omega})} \lesssim_{\epsilon} ( N_2)^{\epsilon} \lambda^{d/2} \left(\frac{1}{\lambda}+\frac{N_2}{N_1}\right)^{1/2} \prod_{j=1}^2 \left(\sum_{|\theta|=\frac{1}{\lambda N_1}, \theta \subset\tau_j} \|Ef_{j,\theta}\|_{L^4_{avg}(w_{\Omega})}^2\right)^{1/2}
\end{equation}

\noindent
when $d\geq 3$,
\begin{equation}
\|Ef_1 Ef_2\|_{L^2_{avg}(w_{\Omega})} \lesssim_{\epsilon} ( N_2)^{\epsilon} \lambda^{d/2} \left(\frac{N_{2}^{d-3}}{\lambda}+\frac{N_2^{d-1}}{N_1}\right)^{1/2} \prod_{j=1}^2 \left(\sum_{|\theta|=\frac{1}{\lambda N_1}, \theta \subset\tau_j} \|Ef_{j,\theta}\|_{L^4_{avg}(w_{\Omega})}^2\right)^{1/2}.
\end{equation}
\end{prop}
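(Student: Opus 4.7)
The plan is to combine a transversal bilinear $L^2$ orthogonality argument (exploiting the separation of $\tau_1$, at $|\xi|\sim 1$, from $\tau_2$, at $|\xi|\sim N_2/N_1$) with the linear $L^2$ decoupling theorem of \cite{bourgain2014proof}, and then to localize to $\Omega$ via the weight framework of Section~\ref{subsectionweight} together with parallel decoupling (Lemma~\ref{parallel}).

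First, I would perform the parabolic rescaling $(\xi,x,t)\mapsto\big((N_2/N_1)\tilde\xi,(N_1/N_2)\tilde x,(N_1/N_2)^2\tilde t\big)$, which preserves the phase $\xi\cdot x+|\xi|^2 t$. This normalizes $\tau_1,\tau_2$ (of radius $N_2/N_1$) to unit-radius caps $\tilde\tau_1,\tilde\tau_2$, turns the small caps $\theta$ of radius $1/(\lambda N_1)$ into caps of radius $1/(\lambda N_2)$, and sends $\Omega$ to $\tilde\Omega=[0,N_2^2]\times[0,\lambda^2 N_1 N_2]^d$. Because $|\xi_1|\sim 1$ and $|\xi_2|\sim N_2/N_1\leq 1$, the centers of the rescaled caps are separated by distance $\sim 1$ and the change of variables $(\xi_1,\xi_2)\mapsto(\xi_1+\xi_2,|\xi_1|^2+|\xi_2|^2)$ has order-one Jacobian on $\tilde\tau_1\times\tilde\tau_2$, so pairs of small caps $(\tilde\theta_1,\tilde\theta_2)$ produce products with almost-disjoint Fourier supports at the natural scale.

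Next, I would apply almost-orthogonality on a natural-scale ball $B_{R^2}$ with $R=\lambda N_2$: distinct pairs $(\tilde\theta_1,\tilde\theta_2)$ produce products $E\tilde f_{1,\tilde\theta_1}\cdot E\tilde f_{2,\tilde\theta_2}$ with essentially disjoint Fourier supports, and Plancherel plus H\"older deliver the model bilinear decoupling
\begin{equation*}
\|E\tilde f_1\cdot E\tilde f_2\|_{L^2_{avg}(w_{B_{R^2}})}^2 \lesssim \prod_{j=1}^2\Big(\sum_{\tilde\theta\subset\tilde\tau_j}\|E\tilde f_{j,\tilde\theta}\|_{L^4_{avg}(w_{B_{R^2}})}^2\Big).
\end{equation*}
For $d\geq 3$ this should be combined with the linear $L^2$ decoupling of \cite{bourgain2014proof} inside each $\tilde\tau_j$ to extract the $L^4$ square function cleanly; this invocation is the source of the $N_2^\epsilon$ loss. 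For $d=2$ the relevant exponent $p=4$ is critical and the almost-orthogonality argument in $\RR^3$ should suffice directly, without recourse to higher-dimensional decoupling.

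Finally, I would upgrade the natural-scale estimate to $\tilde\Omega$. The spatial side $\lambda^2 N_1 N_2\geq (\lambda N_2)^2$ tiles cleanly by natural-scale boxes via parallel decoupling (Lemma~\ref{parallel}); combined with the volume change from rescaling, this accounts for the $\lambda^{d/2}$ prefactor. The temporal side $N_2^2\leq(\lambda N_2)^2$ is thinner than natural when $\lambda>1$, and this thinness is the origin of the two-term bound: when $\tilde\Omega$ has enough temporal room for the Fourier almost-orthogonality to hold at the small-cap scale, the Euclidean bilinear estimate is sharp and yields the $N_2^{d-1}/N_1$ term, and otherwise a correction reflecting the thin-slab structure of $\tilde\Omega$ in time yields the $N_2^{d-3}/\lambda$ (or $1/\lambda$ for $d=2$) term. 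The main obstacle is carrying out this two-regime analysis cleanly -- keeping the $N_2^\epsilon$ loss uniform, tracking weight and volume factors correctly across rescalings, and handling the critical $d=2$ endpoint without invoking the higher-dimensional decoupling of \cite{bourgain2014proof} -- while sharpness of the two-term form is confirmed separately in Section~\ref{sharpness}.
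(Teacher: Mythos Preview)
The proposal has a genuine gap at its core. Your key step asserts that after rescaling, ``distinct pairs $(\tilde\theta_1,\tilde\theta_2)$ produce products $E\tilde f_{1,\tilde\theta_1}\cdot E\tilde f_{2,\tilde\theta_2}$ with essentially disjoint Fourier supports, and Plancherel plus H\"older deliver the model bilinear decoupling.'' This is false. The map $(\xi_1,\xi_2)\mapsto(\xi_1+\xi_2,|\xi_1|^2+|\xi_2|^2)$ goes from $\RR^d\times\RR^d$ to $\RR^{d+1}$ and, even when $|\xi_1-\xi_2|\sim 1$, has $(d-1)$-dimensional level sets; you get injectivity (hence $L^2$ orthogonality of the products) only in the \emph{one} direction aligned with $\xi_1-\xi_2$. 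This is precisely the content of the paper's Lemma~\ref{projection to 2D}: transversality decouples caps into $(v,v^2)$-\emph{plates}, not into small caps. The remaining $d-1$ directions are the whole difficulty.

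For $d=2$ this is fatal to your plan as written. Your claim that ``the almost-orthogonality argument in $\RR^3$ should suffice directly'' would, if true, immediately give the $L^4$ Strichartz estimate on $\mathbb{T}^2$ with no loss --- which is open. The paper instead runs a genuinely iterative argument (Section~\ref{sectionind}): after decoupling into strips via Lemma~\ref{projection to 2D}, it splits each strip into $K$ sub-strips, separates adjacent from non-adjacent pairs, applies parabolic rescaling to the non-adjacent pairs to generate a \emph{new} bilinear transversality in the orthogonal direction, and inducts on $N_2$ (bootstrapping through ranges $N_1\geq N_2^2$, then $N_1\geq N_2^{3/2}$, etc.). None of this machinery is hinted at in your outline. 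Also note that your single rescaling by $N_1/N_2$ sends the center of $\tau_1$ to $|\tilde\xi|\sim N_1/N_2\gg 1$, far outside the unit paraboloid, so the ``normalize both caps to unit size'' picture is not quite right either.

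For $d\geq 3$ your outline is closer in spirit but misses the key point. The paper first uses the one-direction bilinear orthogonality (Lemma~\ref{projection to 2D}) to decouple into thin plates, and then applies the linear decoupling of \cite{bourgain2014proof} in dimension $d-1$, not $d$, to the remaining directions --- the thinness of the plates is exactly what permits this dimension drop, and it is this drop that produces the correct exponent $N_2^{(d-3)/2}$ rather than $N_2^{(d-2)/2}$ (see \eqref{lineardecoupleaux} versus \eqref{lineardecouple}). Your phrase ``combined with the linear $L^2$ decoupling of \cite{bourgain2014proof} inside each $\tilde\tau_j$'' does not capture this.
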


Now, let $f_{1}, f_{2}$ be as in Proposition \ref{transversal}. We define  $K_{0}(\lambda, N_{1},N_{2})$ to be  the best constant such that
\begin{equation}\label{best}
 \|Ef_{1} Ef_{2}\|_{L^2_{avg}(w_{\Omega})} \leq \lambda^{d/2} K_{0}(\lambda, N_1, N_2) \prod_{j=1}^2\left(\sum_{|\theta|=\frac{1
 }{\lambda N_1}}\|Ef_{j,\theta}\|_{L^4_{avg}(w_{\Omega})}^2\right)^{1/2}.
\end{equation}
We also let $\tilde{K}(\lambda, N_{1}, N_{2})$ and  $K(\lambda, N_{1},N_{2})$ be defined as the best constants such that
\begin{equation}\label{best3}
 \|Ef_{1} Ef_{2}\|_{L^2_{avg}(w_{[0,N_{1}^{2}]\times [0,\lambda N_{1}]^{d}})} \leq \lambda^{d/2} \tilde{K}(\lambda, N_1, N_2) \prod_{j=1}^2\left(\sum_{|\theta|=\frac{1
 }{\lambda N_1}}\|Ef_{j,\theta}\|_{L^4_{avg}(w_{[0,N_{1}^{2}]\times [0,\lambda N_{1}]^{d}})}^2\right)^{1/2},
\end{equation}
\begin{equation}\label{best2}
 \|Ef_{1} Ef_{2}\|_{L^2_{avg}(w_{B_{N_{1}^{2}}})} \leq \lambda^{d/2} K(\lambda, N_1, N_2) \prod_{j=1}^2\left(\sum_{|\theta|=\frac{1
 }{\lambda N_1}}\|Ef_{j,\theta}\|_{L^4_{avg}(w_{B_{N_{1}^{2}}})}^2\right)^{1/2}.
\end{equation}
Below we will prove that 
\begin{equation}\label{uuu}
\begin{aligned}
&K_{0}(\lambda, N_1, N_2)\lesssim N_2^{\epsilon}(\frac{1}{\lambda}+\frac{N_2}{N_1})^{1/2}, \qquad d=2,\\
&K_{0}(\lambda, N_1, N_2)\lesssim N_2^{\epsilon}(\frac{N_{2}^{d-3}}{\lambda}+\frac{N_2^{d-1}}{N_1})^{1/2},  \qquad d\geq 3.
\end{aligned}
\end{equation} 
We point out here that  by parallel decoupling and Lemma \ref{parallel}  one always has
\begin{equation}\label{tn}
K_{0}(\lambda, N_{1},N_{2})\lesssim K(\lambda, N_{1},N_{2}), \hspace{1cm} K_{0}(\lambda, N_{1}, N_{2})\lesssim \tilde{K}(\lambda, N_{1}, N_{2}).
\end{equation}

The proof of Proposition \ref{transversal} or equivalently \eqref{uuu}  proceeds as follows.
We  first show
\begin{lem}\label{big lambda}
When $\lambda \geq N_1$, 
\begin{equation}
\tilde{K}(\lambda, N_1, N_2) \lesssim  N_{2}^{\epsilon}\frac{N_2^{(d-1)/2}}{N_1^{1/2}}.
\end{equation}
\end{lem}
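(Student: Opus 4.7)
The plan is to exploit the semiclassical hypothesis $\lambda\geq N_1$, which forces the domain $D=[0,N_1^2]\times[0,\lambda N_1]^d$ to sit inside the dual box of every $\tfrac{1}{\lambda N_1}$-cap on the paraboloid. In this regime the discrete torus geometry becomes irrelevant and the estimate reduces to the classical $L^2$ bilinear Strichartz inequality on $\RR^{d+1}$. Since $f_1$ is supported where $|\xi|\sim 1$ and $f_2$ where $|\xi|\sim N_2/N_1$, one has $|\xi_1-\xi_2|\sim 1$ for $\xi_j\in\tau_j$, so the two pieces meet $P$ transversally and the standard Euclidean bilinear Strichartz bound gives
\begin{equation*}
\|Ef_1\cdot Ef_2\|_{L^2(\RR^{d+1})}\lesssim \left(\tfrac{N_2}{N_1}\right)^{(d-1)/2}\|f_1\|_{L^2(\sigma)}\|f_2\|_{L^2(\sigma)}.
\end{equation*}
Restricting to $D$ and dividing by $m(D)^{1/2}=N_1(\lambda N_1)^{d/2}$ yields an upper bound on $\|Ef_1Ef_2\|_{L^2_{avg}(w_D)}$; the rapid decay of $w_D$ handles the weighted version.

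To match this against the right-hand side of \eqref{best3}, fix a cap $\theta$ of radius $r=\tfrac{1}{\lambda N_1}$ and observe that its dual box $T_\theta$ has dimensions $(\lambda N_1)^2\times(\lambda N_1)^d$, so $\lambda\geq N_1$ guarantees $D\subset T_\theta$. Therefore $Ef_{j,\theta}$ is essentially constant on $D$, and $\|Ef_{j,\theta}\|_{L^4_{avg}(w_D)}\sim \|Ef_{j,\theta}\|_{L^2_{avg}(w_D)}$. The spatial extent $\lambda N_1$ of $D$ is exactly dual to $r$, so the $\{Ef_{j,\theta}\}_\theta$ are approximately orthogonal on $D$, and a direct Fubini computation of $\int_D|Ef_j|^2$---in which the $x$-integral localizing $\xi_1-\xi_2$ at scale $\tfrac{1}{\lambda N_1}$ is the binding constraint precisely because $\lambda\geq N_1$---yields
\begin{equation*}
\sum_{\theta\subset\tau_j}\|Ef_{j,\theta}\|_{L^2_{avg}(w_D)}^2\sim \|Ef_j\|_{L^2_{avg}(w_D)}^2\sim (\lambda N_1)^{-d}\|f_j\|_{L^2(\sigma)}^2.
\end{equation*}

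Combining the two bounds and collecting powers $(N_2/N_1)^{(d-1)/2}\cdot m(D)^{-1/2}\cdot (\lambda N_1)^{d}=\lambda^{d/2}\cdot N_2^{(d-1)/2}/N_1^{1/2}$ produces the claimed estimate on $\tilde K$, with the $N_2^\epsilon$ loss absorbing logarithms from the ``essentially constant'' approximations and the weight manipulations. The main obstacle is justifying the orthogonality rigorously: strict orthogonality lives in $L^2(\RR^{d+1})$, whereas here we are working on a box whose $t$-extent $N_1^2$ is much smaller than the natural dual-cap time-scale $(\lambda N_1)^2$. The semiclassical hypothesis $\lambda\geq N_1$ is exactly what makes the spatial stationary-phase localization tight enough to become the binding one, which keeps both the Euclidean bilinear upper bound on the left and the Plancherel lower bound on the right sharp; tracking the weights $w_D$ carefully through these comparisons (so that no Schwartz tail spoils the sum over $\theta$) is the remaining, more routine, technicality.
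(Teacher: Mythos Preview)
Your approach is different from the paper's and, as written, has a genuine gap. The paper never invokes the Euclidean bilinear Strichartz inequality. Instead it applies the transversality Corollary~\ref{Tcor} with $\upsilon=N_2/N_1$, $\delta=N_1^{-2}$, $R=N_1^2$ to decouple $f_1,f_2$ into caps of radius $N_1^{-2}$ on each ball $B_{N_1^2}$, extends this to $\Omega_1=[0,N_1^2]\times[0,\lambda N_1]^d$ by parallel decoupling, and then further decouples each $N_1^{-2}$-cap into $(\lambda N_1)^{-1}$-caps by an elementary $L^2/L^\infty$ interpolation (the $L^2$ endpoint via Lemma~\ref{L2 orthogonality}, the $L^\infty$ endpoint via Cauchy--Schwarz over the $(\lambda/N_1)^d$ sub-caps together with Lemma~\ref{L infinity}). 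The whole argument stays inside the decoupling framework.

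The gap in your argument is the claimed lower bound $\sum_\theta\|Ef_{j,\theta}\|_{L^4_{avg}(w_D)}^2\gtrsim(\lambda N_1)^{-d}\|f_j\|_{L^2(\sigma)}^2$, which you need to hold for \emph{every} admissible $f_j$ in order to bound the best constant $\tilde K$. Since $D$ sits inside the dual box of each $\theta$, one has $\|Ef_{j,\theta}\|_{L^4_{avg}(w_D)}\sim|Ef_{j,\theta}(0)|=\big|\int_\theta f_j\big|$, and this vanishes whenever $f_j$ has mean zero on $\theta$ (equivalently, whenever $Ef_{j,\theta}$ is physically concentrated far from $D$); for such $f_j$ the right-hand side of~\eqref{best3} collapses while $\|f_j\|_{L^2}$ does not, breaking the chain ``LHS $\lesssim\cdots\|f_1\|_2\|f_2\|_2\lesssim\cdots$ RHS''. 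You sense a difficulty but misdiagnose it: the almost-orthogonality $\sum_\theta\|Ef_{j,\theta}\|_{L^2_{avg}}^2\sim\|Ef_j\|_{L^2_{avg}}^2$ is fine; it is the second equivalence $\|Ef_j\|_{L^2_{avg}(w_D)}^2\sim(\lambda N_1)^{-d}\|f_j\|_{L^2}^2$ that fails (only $\lesssim$ holds in general). A correct version along your lines would have to localize first---replace $Ef_j$ by $\mu_D Ef_j$ and run a bilinear $L^2$ computation on the thickened paraboloid so that $\|Ef_j\|_{L^2(w_D)}$ rather than $\|f_j\|_{L^2}$ appears on the right---and at that point the argument is no longer a black-box application of Euclidean bilinear Strichartz.
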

Note that when $\lambda \geq N_{1}$, Proposition \ref{transversal} follows from \eqref{tn}  and Lemma \ref{big lambda}.

Then, we  show 
\begin{lem}\label{criticallll}
When $\lambda \leq N_{1}$, 
\begin{equation}
\begin{aligned}
&K (\lambda,N_{1},N_{2} )\lesssim N_{2}^{\epsilon}(\frac{1}{\lambda}+\frac{N_{2}}{N_{1}})^{1/2}, \qquad d=2,\\
&K (\lambda,N_{1},N_{2} )\lesssim N_{2}^{\epsilon}(\frac{N_{2}^{d-3}}{\lambda}+\frac{N^{d-1}_{2}}{N_{1}})^{1/2},  \qquad d=3.
\end{aligned}
\end{equation}
\end{lem}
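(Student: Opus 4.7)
The plan is to prove Lemma \ref{criticallll} by establishing two separate bounds on $K(\lambda,N_1,N_2)$ and taking the best of the two. The first bound has the form $N_2^\epsilon (N_2/N_1)^{1/2}$ (resp.\ $N_2^\epsilon (N_2^{d-1}/N_1)^{1/2}$) and comes from bilinear transversality; the second has the form $N_2^\epsilon \lambda^{-1/2}$ (resp.\ $N_2^\epsilon N_2^{(d-3)/2}\lambda^{-1/2}$) and comes from linear decoupling at the fine scale. Since $\min(a,b)^{1/2}\le (a+b)^{1/2}$, the two bounds combined give the additive form stated in the lemma.

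For the first (bilinear transversal) bound I would use the classical $L^2$-bilinear Strichartz inequality: a Plancherel/change-of-variables computation bounds $\|Ef_1 Ef_2\|_{L^2(\mathbb{R}^{d+1})}^2$ by an integral of $|f_1|^2|f_2|^2$ weighted by the Jacobian $|\xi_1-\xi_2|^{-1}$ of the map $(\xi_1,\xi_2)\mapsto(\xi_1+\xi_2,|\xi_1|^2+|\xi_2|^2)$. The transversality between $\tau_1$ (where $|\xi|\sim 1$) and $\tau_2$ (where $|\xi|\sim N_2/N_1$) forces $|\xi_1-\xi_2|\sim 1$, so after localizing to $B_{N_1^2}$ via the weighted Plancherel of Section \ref{subsectionweight} one obtains $\|Ef_1 Ef_2\|_{L^2_{avg}(w_{B_{N_1^2}})}\lesssim\|f_1\|_{L^2}\|f_2\|_{L^2}$. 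Decomposing $\|f_j\|_{L^2}^2=\sum_\theta\|f_{j,\theta}\|_{L^2}^2$ and converting each $\|f_{j,\theta}\|_{L^2}$ to the $L^4$-cap norm on the right of (\ref{best2}) via the uncertainty principle (Lemma \ref{L infinity}) then produces the $N_2/N_1$ factor, with the cap measure $|\theta|^d$ accounting for the extra $N_2^{d-2}$ in higher dimensions.

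For the second (decoupling) bound I would parabolically rescale each cap $\tau_j$ of radius $N_2/N_1$ to the unit paraboloid; the target fine caps of size $1/(\lambda N_1)$ become caps of size $1/(\lambda N_2)$ and $B_{N_1^2}$ becomes an anisotropic region of spatial size $N_1 N_2$ and temporal size $N_2^2$. On this region I would apply the linear $L^{2(d+2)/d}$-decoupling of Bourgain-Demeter (\ref{decouple}) at the appropriate scale. When the rescaled region contains the critical ball $B_{(\lambda N_2)^2}$, (\ref{decouple}) applies directly and one aggregates via parallel decoupling (Lemma \ref{parallel}); otherwise the box is subcritical and I would combine a coarser linear decoupling with $L^2$-orthogonality, which remains valid throughout $\lambda\le N_1$ because the fine caps have $\xi$-separation $\sim 1/(\lambda N_2)\gg 1/(N_1 N_2)$, the Fourier resolution of the spatial side. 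H\"older $\|Ef_1 Ef_2\|_{L^2_{avg}}\le\|Ef_1\|_{L^p_{avg}}\|Ef_2\|_{L^{p'}_{avg}}$ for suitable conjugate exponents, together in $d\ge 3$ with interpolation against the trivial $L^2$ bound to convert back to the $L^4$-square-function of (\ref{best2}), yields the $\lambda^{-1/2}$ factor with the stated $N_2$-power.

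The main obstacle is sharp tracking of exponents in $d\ge 3$: since the critical BD exponent $p=2(d+2)/d$ does not equal the $L^4$ appearing on the right of (\ref{best2}), the interpolation must land exactly on the powers $N_2^{d-3}$ and $N_2^{d-1}$ in the two terms of the stated bound. Balancing the subcritical $L^2$-orthogonality step with the bilinear transversality step while correctly accounting for the weights $w_{B_{N_1^2}}$ throughout is where the main bookkeeping occurs; the $d=2$ case is cleaner because $p=4$ already agrees with the square-function exponent.
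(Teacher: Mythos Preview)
There is a genuine gap in your second bound. Once H\"older splits $\|Ef_1 Ef_2\|_{L^2_{avg}}$ into a product of linear norms, decoupling each factor separately no longer sees the transversality between $\tau_1$ and $\tau_2$. Concretely in $d=2$: your parabolic rescaling sends $B_{N_1^2}$ to a box with temporal side $N_2^2$, so BD decouples losslessly only to caps of size $1/N_2$; the remaining passage from $1/N_2$ to $1/(\lambda N_2)$ via $L^2$-orthogonality interpolated with Cauchy--Schwarz costs exactly $\lambda^{d/2}$ on $\|Ef_j\|_{L^4_{avg}}^2$, which after H\"older cancels the $\lambda^{d/2}$ in (\ref{best2}) and leaves only $K\lesssim N_2^\epsilon$, not $N_2^\epsilon\lambda^{-1/2}$. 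In $d\ge3$ the same route yields $K\lesssim N_2^{(d-2)/2}$, off from the target $N_2^{(d-3)/2}\lambda^{-1/2}$ by $(N_2\lambda)^{1/2}$. The missing $\lambda^{-1/2}$ is a genuinely \emph{bilinear} gain and cannot be extracted from linear decoupling plus H\"older.

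What the paper does instead is use transversality \emph{inside} the decoupling step. For $d\ge3$ (Lemma~\ref{base case high}) one first applies Lemma~\ref{projection to 2D} to decouple $Ef_1 Ef_2$ into products over thin plates whose short side already sits at the final scale $1/(\lambda N_1)$; each plate is effectively a $(d{-}1)$-dimensional object, and it is $(d{-}1)$-dimensional BD---not $d$-dimensional---that finishes the job. That dimension drop is exactly where both the extra $N_2^{-1/2}$ and the $\lambda^{-1/2}$ come from. In $d=2$ there is no lower-dimensional BD to invoke, and the paper runs a multi-scale induction on $N_2$ (Section~\ref{sectionind}) alternating Lemma~\ref{projection to 2D} with parabolic rescaling, so that the transversal gain is harvested at every scale rather than once. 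Your first bound has a related problem: the localization $\|Ef_1 Ef_2\|_{L^2_{avg}(w_{B_{N_1^2}})}\lesssim\|f_1\|_2\|f_2\|_2$ does not follow from the global Euclidean bilinear estimate by a weighted-Plancherel argument, and together with your cap-measure conversion it would give $K\lesssim(N_2/N_1)^{1/2}$ uniformly in $\lambda$, contradicting the second sharpness example in Section~\ref{sharpness} for $\lambda<N_1/N_2$.
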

From  \eqref{tn}, clearly Proposition \ref{transversal} follows from Lemma \ref{big lambda} and Lemma \ref{criticallll}.

\medskip
The proof of Lemma \ref{criticallll} in dimension $d=2$ relies on  induction (of scale $N_{2}$). The proof of Lemma \ref{criticallll} in dimension in $d\geq 3$ is easier and more straightforward, (in some sense, it also relies on induction, but it is enough to induct only once.)

\medskip
We  first show the base case: 
\begin{lem}\label{base case}
When  $\lambda \leq N_1$ and $N_2\lesssim 1$, , $K(\lambda, N_1, N_2)  \lesssim \frac{1}{\lambda^{1/2}}$.
\end{lem}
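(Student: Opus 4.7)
My strategy is to combine the refined bilinear $L^2$-restriction estimate on the paraboloid in $\mathbb R^{d+1}$ (which exploits the transversality between $\tau_1$ and $\tau_2$) with the cap orthogonality $f_j=\sum_\theta f_{j,\theta}$ and a wave-packet comparison between the $L^{p_c}$-norm on $\mathbb R^{d+1}$ and the $L^4_{avg}$-norm on $B_{N_1^2}$. Throughout, I set $\sigma:=N_2/N_1$ and $\delta:=1/(\lambda N_1)$, noting that $\sigma\lesssim 1/N_1$ since $N_2\lesssim 1$.

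First I would apply the refined bilinear $L^2$-restriction estimate. Since $\tau_1\subset\{|\xi|\sim 1\}$ and $\tau_2\subset\{|\xi|\sim N_2/N_1\}$ are caps of radius $\sigma$ whose centers lie at distance $\sim 1$, the standard Plancherel-convolution computation (using that the Jacobian of $(\xi_1,\xi_2)\mapsto(\xi_1+\xi_2,|\xi_1|^2+|\xi_2|^2)$ is $\sim |\xi_1-\xi_2|\sim 1$) yields
\[
\|Ef_1\cdot Ef_2\|_{L^2(\mathbb R^{d+1})}^2\lesssim \sigma^{d-1}\,\|f_1\|_{L^2}^2\,\|f_2\|_{L^2}^2.
\]
Passing to the average on $B_{N_1^2}$ via the weight $w_{B_{N_1^2}}\lesssim 1$, $\int w\sim N_1^{2(d+1)}$, gives $\|Ef_1 Ef_2\|_{L^2_{avg}(w_{B_{N_1^2}})}^2\lesssim \sigma^{d-1}N_1^{-2(d+1)}\|f_1\|^2\|f_2\|^2$. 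Next, I would convert $\|f_j\|^2$ to $\sum_\theta\|Ef_{j,\theta}\|_{L^4_{avg}(w_{B_{N_1^2}})}^2$ using $L^2$-orthogonality $\|f_j\|^2=\sum_\theta\|f_{j,\theta}\|^2$ combined with the wave-packet structure of each $Ef_{j,\theta}$: it is essentially constant on a tube with cross-section $\lambda N_1$ and time length $(\lambda N_1)^2$, which when intersected with $B_{N_1^2}$ (whose radius $N_1^2$ clips the time to $N_1^2$ since $\lambda\geq 1$) has effective volume $\sim (\lambda N_1)^d\cdot N_1^2$. The sharp case of Strichartz at $p_c=2(d+2)/d$ applied on the cap then gives the relation $\|f_{j,\theta}\|_{L^2}^2\sim \lambda^{d/2}N_1^{3d/2}\|Ef_{j,\theta}\|_{L^4_{avg}(w_{B_{N_1^2}})}^2$. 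Assembling these ingredients, using $\sigma^{d-1}\lesssim N_1^{-(d-1)}$,
\[
\|Ef_1 Ef_2\|_{L^2_{avg}(w_{B_{N_1^2}})}^2 \lesssim \frac{\sigma^{d-1}\lambda^d N_1^{3d}}{N_1^{2(d+1)}}\prod_{j=1}^2\sum_\theta\|Ef_{j,\theta}\|_{L^4_{avg}(w_{B_{N_1^2}})}^2 \lesssim \frac{\lambda^d}{N_1}\prod_{j=1}^2\sum_\theta\|Ef_{j,\theta}\|_{L^4_{avg}(w_{B_{N_1^2}})}^2,
\]
so comparing with the definition \eqref{best2}, $K(\lambda,N_1,N_2)\lesssim N_1^{-1/2}\leq \lambda^{-1/2}$ since $\lambda\leq N_1$.

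The main obstacle I anticipate is justifying the Knapp-type heuristic $\|f_{j,\theta}\|_{L^2}^2\lesssim \lambda^{d/2}N_1^{3d/2}\|Ef_{j,\theta}\|_{L^4_{avg}(w_{B_{N_1^2}})}^2$ for \emph{general} $f_{j,\theta}$: for highly concentrated (plane-wave-like) $f_{j,\theta}$ the right-hand side is genuinely smaller than the left-hand side, so the bound is not uniform. In practice this forces one either to decompose $f_{j,\theta}$ into wave packets and treat each regime separately, or to invoke linear $L^{p_c}$-decoupling (Bourgain-Demeter) at the cost of a $(\lambda N_1)^\epsilon$ loss. Fortunately, the slack between the bound produced by the heuristic, $1/N_1^{1/2}$, and the target $1/\lambda^{1/2}$ is a factor $(\lambda/N_1)^{1/2}$, which is enough to absorb such technical losses in the base-case regime $N_2\lesssim 1$.
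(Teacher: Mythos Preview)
Your approach has a genuine gap at the conversion step $\|f_{j,\theta}\|_{L^2}^2 \lesssim \lambda^{d/2}N_1^{3d/2}\|Ef_{j,\theta}\|_{L^4_{avg}(w_{B_{N_1^2}})}^2$. You correctly flag this as an obstacle, but you underestimate it: the inequality is not merely off by an $\epsilon$-power, it is false by an \emph{unbounded} factor. For any $f_{j,\theta}$ supported in a $\frac{1}{\lambda N_1}$-cap, replacing $f_{j,\theta}(\xi)$ by $e^{2\pi i \xi\cdot x_0}f_{j,\theta}(\xi)$ leaves $\|f_{j,\theta}\|_{L^2}$ unchanged but translates $|Ef_{j,\theta}|$ by $x_0$ in physical space; taking $|x_0|\gg N_1^2$ makes $\|Ef_{j,\theta}\|_{L^4_{avg}(w_{B_{N_1^2}})}$ arbitrarily small. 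Neither a wave-packet decomposition nor linear decoupling with an $(\lambda N_1)^\epsilon$ loss can repair this, and the polynomial slack $(\lambda/N_1)^{1/2}$ certainly cannot absorb an unbounded ratio. The structural problem is that once you pass to the global norm $\|Ef_1Ef_2\|_{L^2(\mathbb{R}^{d+1})}$ and then to $\|f_j\|_{L^2}$, all spatial localization is lost, and there is no route back to norms on the right-hand side that live on $B_{N_1^2}$.

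The paper's proof avoids this entirely by staying inside the local weighted framework. It applies Corollary~\ref{Tcor} on balls $B_{\lambda N_1}$ with $\upsilon=N_2/N_1$ and $\delta=1/(\lambda N_1)$, obtaining
\[
\int |Ef_1 Ef_2|^2 w_{B_{\lambda N_1}}\ \lesssim\ (\lambda N_2)^{d-1}\,|\log|^C \sum_{|\theta_j|=\frac{1}{\lambda N_1}} \int |Ef_{1,\theta_1}Ef_{2,\theta_2}|^2 w_{B_{\lambda N_1}},
\]
then turns the bilinear $L^2$ on the right into $\prod_j\|Ef_{j,\theta_j}\|_{L^4}^2$ by H\"older, and finally passes from $B_{\lambda N_1}$ to $B_{N_1^2}$ by parallel decoupling. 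Since $N_2\lesssim 1$ the prefactor is $\lesssim \lambda^{d-1}$, giving $\lambda^{d/2}K\lesssim\lambda^{(d-1)/2}$ and hence $K\lesssim\lambda^{-1/2}$. The point is that Corollary~\ref{Tcor} is an $L^4$-orthogonality (decoupling) statement in which both sides are weighted integrals over the \emph{same} region, so no global-to-local conversion is ever needed.
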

Lemma \ref{base case} is not as useful in dimension $d\geq 3$, we indeed have a better estimate: 
\begin{lem}\label{base case high}
When $d\geq 3$,  $\lambda \leq N_{1}$ and $\lambda \leq \frac{ N_{1}}{N_{2}^{2}}$, $K(\lambda, N_{1}, N_{2})\lesssim \left(\frac{N_{2}^{d-3}}{\lambda}\right)^{1/2}$.
\end{lem}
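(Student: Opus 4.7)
\smallskip
\noindent\textbf{Proof plan for Lemma~\ref{base case high}.}

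The plan is to derive the lemma from the Euclidean bilinear $L^2$--Strichartz estimate on $\mathbb{R}^{d+1}$, transferred to the averaged norm on $B_{N_1^2}$, combined with a per--cap Plancherel/wave--packet conversion on the right-hand side. Since $\tau_1$ (at $|\xi|\sim 1$) and $\tau_2$ (at $|\xi|\sim N_2/N_1$) are transverse caps on the paraboloid whose tangent planes differ by angle $\sim 1$, a standard Plancherel argument reducing to the map $(\xi,\eta)\mapsto (\xi+\eta,|\xi|^2+|\eta|^2)$ (whose Jacobian is bounded below by a transversality constant $\sim|\xi-\eta|\sim 1$) yields
\[
\|Ef_1\cdot Ef_2\|_{L^2(\mathbb{R}^{d+1})} \lesssim (N_2/N_1)^{(d-1)/2}\,\|f_1\|_{L^2}\|f_2\|_{L^2},
\]
where the $(N_2/N_1)^{(d-1)/2}$ factor reflects the thickness of $\tau_2$. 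Passing to the average over $B_{N_1^2}$ (dividing by $m(B_{N_1^2})^{1/2}\sim N_1^{d+1}$ and noting $w_{B_{N_1^2}}\lesssim 1$) gives
\[
\|Ef_1Ef_2\|_{L^2_{avg}(w_{B_{N_1^2}})} \lesssim N_1^{-(d+1)}(N_2/N_1)^{(d-1)/2}\|f_1\|_2\|f_2\|_2.
\]

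The next step is to express each $\|f_j\|_2$ in terms of the cap square function $(\sum_\theta\|Ef_{j,\theta}\|_{L^4_{avg}(w_{B_{N_1^2}})}^2)^{1/2}$. By Plancherel, $\|f_j\|_2^2=\sum_\theta\|f_{j,\theta}\|_2^2$; and for each cap $\theta$ of radius $1/(\lambda N_1)$, the function $Ef_{j,\theta}$ is essentially constant on balls of size $\lambda N_1$ in space-time. Covering $B_{N_1^2}$ by $\lambda N_1$-balls, applying Lemma~\ref{L infinity} on each such ball to trade $L^\infty$ for $L^4_{avg}$, and using Plancherel in $x$ on each $t$-slice ($\int_{\mathbb{R}^d}|Ef_{j,\theta}|^2dx=\|f_{j,\theta}\|_2^2$) yields the key per--cap inequality
\[
\|f_{j,\theta}\|_2 \lesssim \lambda^{d/4}\, N_1^{3d/4}\, \|Ef_{j,\theta}\|_{L^4_{avg}(w_{B_{N_1^2}})}.
\]
Squaring and summing over $\theta$, and plugging back into the bilinear bound above (the $N_1$-exponents collapse to $-1/2$), one obtains
\[
\|Ef_1Ef_2\|_{L^2_{avg}(w_{B_{N_1^2}})} \lesssim \lambda^{d/2}\bigl(N_2^{d-1}/N_1\bigr)^{1/2}\prod_{j=1}^2\Bigl(\sum_\theta\|Ef_{j,\theta}\|_{L^4_{avg}(w_{B_{N_1^2}})}^2\Bigr)^{1/2},
\]
i.e.\ $K(\lambda,N_1,N_2)\lesssim (N_2^{d-1}/N_1)^{1/2}$. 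Finally, the hypothesis $\lambda N_2^2\le N_1$ is equivalent to $N_2^{d-1}/N_1\le N_2^{d-3}/\lambda$, which delivers the desired bound $K\lesssim (N_2^{d-3}/\lambda)^{1/2}$.

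The main obstacle will be the per--cap bound $\|f_{j,\theta}\|_2 \lesssim \lambda^{d/4} N_1^{3d/4}\|Ef_{j,\theta}\|_{L^4_{avg}}$: it is tight for $f_{j,\theta}$ that is spread out on all of $\theta$ (the characteristic function of $\theta$ is the extremal case), but a highly concentrated $f_{j,\theta}$ produces a wave-packet with artificially small $L^4_{avg}$. Handling this requires either (i) a pigeonhole/interpolation argument showing that concentrated $f_{j,\theta}$ also produces a correspondingly small contribution to the bilinear product on the left (so the worst case is the spread-out one), or (ii) a finer wave-packet decomposition of each $f_{j,\theta}$ combined with orthogonality on the tubes; both approaches ultimately reduce to the spread-out estimate above via Lemma~\ref{L infinity} applied uniformly on a covering of $B_{N_1^2}$ by $\lambda N_1$-balls.
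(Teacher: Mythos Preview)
Your approach is genuinely different from the paper's, but it has a fatal flaw that the proposed fixes cannot repair.

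The paper never touches the global $L^2$ norm $\|f_j\|_2$. Instead it (i) uses the transversality Lemma~\ref{projection to 2D} to decouple the $\tfrac{N_2}{N_1}$-caps into thin $(\tfrac{N_2}{N_1},\tfrac{1}{\lambda N_1})$-plates, (ii) applies the Bourgain--Demeter linear decoupling \eqref{decouple} in dimension $d-1$ (the plates are so thin in one direction that they sit in the $N_1^{-2}$-neighborhood of a $(d-1)$-dimensional paraboloid) to pass to $(\tfrac{1}{N_1},\tfrac{1}{\lambda N_1})$-plates, and (iii) finishes by Cauchy--Schwarz/$L^2$-orthogonality to reach $\tfrac{1}{\lambda N_1}$-caps. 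Every step is stated with weighted averaged $L^p$ norms on $B_{N_1^2}$.

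Your route via the Euclidean bilinear $L^2$ estimate breaks down precisely at the step you flag as the ``main obstacle'', and the obstacle is not merely technical. The per-cap inequality
\[
\|f_{j,\theta}\|_{2}\ \lesssim\ \lambda^{d/4}N_1^{3d/4}\,\|Ef_{j,\theta}\|_{L^4_{avg}(w_{B_{N_1^2}})}
\]
is simply false for $f_{j,\theta}$ concentrated on a set of small measure inside $\theta$ (the right-hand side tends to $0$ while the left is fixed). More decisively, your \emph{intermediate} conclusion $K(\lambda,N_1,N_2)\lesssim (N_2^{d-1}/N_1)^{1/2}$, which you derive before ever invoking the hypothesis $\lambda\le N_1/N_2^2$, is false: the third example in Section~\ref{sharpness} (adapted to $B_{N_1^2}$, using $N_1/\lambda$ periods in each of the $x_2,\dots,x_d$ directions) gives $K(\lambda,N_1,N_2)\gtrsim (N_2^{d-3}/\lambda)^{1/2}$, which is \emph{strictly larger} than $(N_2^{d-1}/N_1)^{1/2}$ whenever $\lambda< N_1/N_2^2$. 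So no pigeonholing or wave-packet refinement can rescue that intermediate bound---any correct argument must genuinely use the hypothesis $\lambda\le N_1/N_2^2$ \emph{before} reaching the final decoupling constant, not after.

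The underlying reason is that restricting the global bilinear estimate to $B_{N_1^2}$ by the crude inequality $\|\cdot\|_{L^2(w_{B_{N_1^2}})}\le\|\cdot\|_{L^2(\mathbb{R}^{d+1})}$ discards exactly the localization that the $L^4_{avg}$ square function sees; you then try to recover it on the right, which is impossible. The paper's decoupling approach keeps everything local from the outset, and the hypothesis $\lambda\le N_1/N_2^2$ enters through the geometry of the plates (it is what allows the dimension reduction to $d-1$ in step (ii)).
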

%(We would do the proof of Lemma \ref{base case high} in the end of Section \ref{sectionpro}.)

We then show the following lemma, which ensures that we  only need to induct until $\lambda\leq  \frac{N_1}{N_2}$, when $d=2$, and until $\frac{N_{1}}{N_{2}}$ when $d\geq 3$. 
\begin{lem}\label{InductionN2}
Let $\lambda\leq N_{1}$.

\medskip
Let  $d=2$.   Assume we have that $K(\lambda, N_1, N_2) \leq \lambda^{-1/2}$ when $\lambda < \frac{N_1}{N_2}$. Then 
	\begin{align*}
		K(\lambda, N_1, N_2) \leq N_{2}^{\epsilon}\frac{N_2^{\frac{d-1}{2}}}{N_1^{1/2}} \qquad \mbox{ when}\quad  \lambda \geq \frac{N_1}{N_2}.
	\end{align*}

\medskip
Let $d\geq 3$. Assume we have that $K(\lambda, N_{1},N_{2})\leq (\frac{N_{2}^{d-3}}{\lambda})^{1/2}$ when $\lambda <\frac{N_{1}}{N_{2}^{2}}$. Then
	\begin{align*}
		K(\lambda, N_1, N_2) \leq N_{2}^{\epsilon}\frac{N_2^{\frac{d-1}{2}}}{N_1^{1/2}} \quad \mbox{ when} \quad  \lambda \geq \frac{N_1}{N^{2}_2}.
	\end{align*}
\end{lem}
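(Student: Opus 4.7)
The plan is to reduce the large-$\lambda$ regime of the $K$-estimate to a configuration that falls under Lemma~\ref{big lambda}, via a parabolic rescaling centered at the cap $\tau_1$. Given $f_1,f_2$ as in the definition of $K(\lambda,N_1,N_2)$ with $\lambda$ above the stated threshold, I apply the change of variables
\[
\xi = \xi_0 + \tfrac{N_2}{N_1}\eta,\qquad
x' = \tfrac{N_2}{N_1}(x+2\xi_0 t),\qquad
t' = \bigl(\tfrac{N_2}{N_1}\bigr)^{\!2} t,
\]
where $\xi_0$ is a fixed point of $\tau_1$. Under this parabolic rescaling: $\tau_1$ is sent to the unit ball in $\eta$-space; $\tau_2$ is sent to a cap of radius $1$ centered at distance $\sim N_1/N_2$; caps $\theta$ of size $1/(\lambda N_1)$ become caps of size $1/(\lambda N_2)$; and the physical ball $B_{N_1^2}$ is contained in the box $Q':=[N_1 N_2]^d\times[N_2^2]$. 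The pointwise identity $|Ef_j(x,t)|=(N_2/N_1)^d |Eg_j(x',t')|$, with $g_j(\eta):=f_j(\xi_0+(N_2/N_1)\eta)$, together with the Jacobian $(N_1/N_2)^{d+2}$, dictates exactly how the $L^2_{avg}$ and $L^4_{avg}$ norms entering the $K$-definition transform.

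The key observation is that the rescaled box $Q'$ matches the $\tilde K$-type domain $[0,N'^2]\times[0,\lambda' N']^d$ with $N'=N_2$ and $\lambda'=N_1$. Since our standing assumption is $N_1\geq N_2$, we have $\lambda'\geq N'$, which places the rescaled bilinear problem in the large-$\lambda$ regime of Lemma~\ref{big lambda}. I would then apply Lemma~\ref{big lambda} to obtain the bilinear decoupling constant in the rescaled coordinates, handling two technical mismatches along the way: first, the rescaled cap size $1/(\lambda N_2)$ is coarser than the natural $\tilde K$-cap size $1/(N_1 N_2)$ when $\lambda<N_1$, which is absorbed by parallel decoupling (Lemma~\ref{parallel}) applied to cover $Q'$ by translates of $[\lambda N_2]^d\times[N_2^2]$; second, in the rescaled problem the higher-frequency function $g_2$ sits at $|\eta|\sim N_1/N_2$ rather than at the origin as in the convention of Lemma~\ref{big lambda}, which is accommodated by the symmetry of the bilinear estimate under interchanging the roles of the two caps.

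Unwinding the rescaling using the Jacobian and the pointwise identity above converts the rescaled bound to an estimate on the original $K$-quantity, yielding
\[
K(\lambda,N_1,N_2)\lesssim N_2^{\epsilon}\,N_2^{(d-1)/2}/N_1^{1/2},
\]
as claimed; the same argument covers the $d=2$ threshold $\lambda\geq N_1/N_2$ and the $d\geq 3$ threshold $\lambda\geq N_1/N_2^2$ (the latter only requiring one inductive step, as the paper remarks, because the rescaled parameters immediately satisfy the hypothesis of Lemma~\ref{big lambda}).

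The main obstacle is the careful bookkeeping of all the volume and Jacobian factors across the parabolic rescaling and the parallel decoupling step, so that the final constant is indeed $N_2^{(d-1)/2}/N_1^{1/2}$ with only an $N_2^\epsilon$ loss; in particular, verifying that the ``swap'' of the two caps' positions in the rescaled problem and the adjustment of the rescaled ``$N_2$''-parameter (effectively to $N_2^2/N_1$) are consistent with the conventions required by Lemma~\ref{big lambda}, and introduce no extraneous powers of $N_1,N_2,\lambda$ beyond the desired $N_2^\epsilon$.
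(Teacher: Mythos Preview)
Your parabolic-rescaling strategy has a genuine gap: after the rescaling $\xi=\xi_0+(N_2/N_1)\eta$ centered at $\xi_0\in\tau_1$, the rescaled functions $g_1,g_2$ sit on caps of radius $1$ with $g_1$ near the origin and $g_2$ near $|\eta|\sim N_1/N_2$. This is \emph{not} the configuration required by Lemma~\ref{big lambda}, which concerns $\tilde K(\lambda',N_1',N_2')$ for functions on the \emph{unit} paraboloid with one cap at $|\xi|\sim 1$ and the other near the origin, both of radius $N_2'/N_1'$. Your domain $Q'=[0,N_2^2]\times[0,N_1N_2]^d$ does match the $\tilde K$-domain with $N_1'=N_2$, $\lambda'=N_1$, but the frequency picture does not. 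If you try to fix this by a further dilation $\eta'=(N_2/N_1)\eta$ to bring $g_2$ back to $|\eta'|\sim 1$, the domain rescales to $[0,N_1^2]^{d+1}$, the caps become $1/(\lambda N_1)$, and you have recovered exactly the original $K(\lambda,N_1,N_2)$ problem with the roles of the two functions swapped---the rescaling is circular. A second symptom of the problem is that your argument never invokes the lemma's hypothesis (the assumed bound on $K(\lambda,N_1,N_2)$ below the threshold), which is in fact indispensable.

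The paper's proof is entirely different and does not use parabolic rescaling here. It chooses an intermediate scale $N_2'$ at the threshold (namely $N_2'=N_1/\lambda$ when $d=2$ and $N_2'=(N_1/\lambda)^{1/2}$ when $d\ge 3$), applies Corollary~\ref{Tcor} on balls $B_{\lambda N_1}$ (which tile $B_{N_1^2}$ since $\lambda\le N_1$) to decouple the $N_2/N_1$-caps down to $N_2'/N_1$-caps at a cost of $(N_2/N_2')^{(d-1)/2}$, and then invokes the \emph{hypothesis} $K(\lambda,N_1,N_2')\le \lambda^{-1/2}$ (resp.\ $((N_2')^{d-3}/\lambda)^{1/2}$) on each pair of smaller caps. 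Combining these two steps gives
\[
K(\lambda,N_1,N_2)\ \lesssim\ \Bigl(\tfrac{N_2}{N_2'}\Bigr)^{(d-1)/2} K(\lambda,N_1,N_2'),
\]
and with the stated choices of $N_2'$ the right side is exactly $N_2^{(d-1)/2}/N_1^{1/2}$ up to $N_2^\epsilon$. The hypothesis is what closes the argument; parabolic rescaling is used elsewhere in the paper (Section~\ref{sectionind}) but not for this lemma.
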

Note that when $d\geq 3$, Lemma \ref{base case high} and Lemma \ref{InductionN2}  imply  Lemma \ref{criticallll}.
In dimension $d=2$,  we use induction (we rely on the so-called parabolic rescaling) to finish the proof of Lemma \ref{criticallll}.

\medskip
We end this section with an outline of the  structure of the rest  of the paper. We show that Proposition \ref{transversal} implies Theorem \ref{bilinear decoupling} in  Section \ref{sectionred}. Lemma \ref{big lambda}, Lemma \ref{base case}, Lemma \ref{InductionN2} all rely  on the exploration of the so-called {\it transversality}  which essentially allow us to reduce  the dimensionality of the problem. We first explore  {\it transversality} in Section \ref{sectionpro} and then we prove Lemma \ref{big lambda}, Lemma \ref{base case}, Lemma \ref{InductionN2} in Section \ref{section3lemma}.

\medskip
The detail of the induction procedure, (which is non trivial), that is used  to prove  Lemma \ref{criticallll} in dimension $d=2$ is  given in Section \ref{sectionpro}.
We remark here the proof of Lemma \ref{criticallll} relies on Lemma \ref{big lambda}.

\medskip
Finally, we  prove  Lemma \ref{base case high} at the  end of Section \ref{sectionind}, which, together with Lemma \ref{InductionN2} will  conclude the proof of Lemma \ref{criticallll} in dimension $d\geq 3$.

\section{Proposition \ref{transversal} implies Theorem \ref{bilinear decoupling}}\label{sectionred}

We first introduce one standard but important tool in the following lemma.

\begin{lem}\label{L2 orthogonality}[\cite{bourgain2014proof}, \cite{bourgain2016study}]
Let $\{g_{\alpha}\}$ be a family of functions such that  $\mbox{supp } \widehat{g}_{\alpha}$ are finitely overlapped cubes of length $\rho$.  Let  {$A$} be bounded convex open set tiled by finitely overlapped cubes { $Q$} of side length $\geq \rho^{-1}$, then for the $w_{A}$ adapted to $A$, the following holds,
	\begin{align*} 
		\fint_{A} |\sum g_{\alpha}|^2w_{A}  \lesssim \sum\frac{1}{m(A)} \int |g_{\alpha}|^2 w_A.
	\end{align*}
\end{lem}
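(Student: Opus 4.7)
The plan is to combine a dyadic decomposition of the weight $w_A$ with a Plancherel-type almost-orthogonality at scale $\rho^{-1}$. The building block is property \eqref{weight} from Section \ref{subsectionweight}, which supplies a pointwise envelope $w_A(x) \lesssim \sum_{Q'} w_A(Q')\,\mu_{Q'}^2(x)$, where $\{Q'\}$ is a covering of the ambient space by translates of a reference cube of diameter $\sim \rho^{-1}$, and each $\mu_{Q'}$ is a smooth bump adapted to $Q'$ whose Fourier transform is supported in a ball of radius $\lesssim \rho$. One is free to choose the tiling and may take cubes of diameter exactly $\rho^{-1}$, which is compatible with the hypothesis $|Q|\geq \rho^{-1}$.

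First, I would fix one such $Q'$ and estimate the local quantity $\int |\sum_\alpha g_\alpha|^2 \mu_{Q'}^2$. Since $\widehat{\mu_{Q'}}$ has support of radius $\lesssim \rho$, the Fourier transform of $g_\alpha\,\mu_{Q'}$ is the convolution $\widehat{g_\alpha}*\widehat{\mu_{Q'}}$, which sits in an $O(\rho)$-thickening of the original side-$\rho$ cube containing $\operatorname{supp}\widehat{g_\alpha}$. These thickened cubes are still finitely overlapping, with overlap bounded in terms of the dimension and the original overlap multiplicity. Plancherel then yields
\begin{equation*}
\int \Bigl|\sum_\alpha g_\alpha\Bigr|^2 \mu_{Q'}^2 = \int \Bigl|\sum_\alpha \widehat{g_\alpha\,\mu_{Q'}}\Bigr|^2 \lesssim \sum_\alpha \int \bigl|\widehat{g_\alpha\,\mu_{Q'}}\bigr|^2 = \sum_\alpha \int |g_\alpha|^2 \mu_{Q'}^2.
\end{equation*}

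Next, I would weight this local estimate by $w_A(Q')$, sum over $Q'$, and close the loop using \eqref{weight} from both sides:
\begin{equation*}
\int \Bigl|\sum_\alpha g_\alpha\Bigr|^2 w_A \lesssim \sum_{Q'} w_A(Q') \int \Bigl|\sum_\alpha g_\alpha\Bigr|^2 \mu_{Q'}^2 \lesssim \sum_\alpha \int |g_\alpha|^2 \Bigl(\sum_{Q'} w_A(Q')\mu_{Q'}^2\Bigr) \lesssim \sum_\alpha \int |g_\alpha|^2 w_A.
\end{equation*}
Dividing by $m(A)$ produces the averaged inequality in the lemma.

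The only real obstacle is verifying the surviving almost-orthogonality after multiplication by $\mu_{Q'}$: one needs the Fourier support of $\mu_{Q'}$ to be small enough compared with the separation scale of the $\widehat{g_\alpha}$. This is precisely why the hypothesis demands cubes of diameter at least $\rho^{-1}$ --- a shorter cube would force $\widehat{\mu_{Q'}}$ to spread over a ball of radius much larger than $\rho$, blurring the finitely overlapping cubes into a non-overlap regime and destroying the Plancherel step. Everything else reduces to Cauchy--Schwarz type bookkeeping and an application of \eqref{weight} in reverse.
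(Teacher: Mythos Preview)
Your proposal is correct and follows essentially the same route as the paper: use the pointwise envelope \eqref{weight} to pass from $w_A$ to $\sum_{Q'} w_A(Q')\mu_{Q'}^2$, apply Plancherel on each $Q'$ using that $\widehat{\mu_{Q'}}$ has Fourier support at scale $\rho$ so that the convolved supports $\widehat{g_\alpha}*\widehat{\mu_{Q'}}$ remain finitely overlapping, and then use \eqref{weight} in reverse. The only cosmetic difference is that the paper first reduces to the case where $A$ equals a single tile $Q$ (writing $w_A=\sum_{Q\subset A}w_Q$) before invoking \eqref{weight}, whereas you apply \eqref{weight} directly to $w_A$; the substance of the argument is identical.
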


\begin{proof}
Since we can sum up the weight function over a finitely overlapping cover $\{Q\}$ of $A$: $w_{A}=\sum_{Q\subset A} w_{Q}$, it suffices to prove for $A=Q$. 
Recall by the inequality~\ref{weight}, we cover the whole space $\mathbb{R}^n$ by translations $Q'$ of $Q$, 

	\begin{align*}
		\fint_{Q} |\sum g_{\alpha}|^2w_{Q}dx&\leq\frac{1}{m(Q)} \sum_{Q'} \int_{Q'} w_{Q}(Q') |\sum g_{\alpha} |^2 \\
		&\leq \frac{1}{m(Q)} \sum_{Q'}w_{Q}(Q')\int |\sum g_{\alpha}|^2 \mu_{Q'}^2 \\
		&=  \frac{1}{m(Q)}\sum_{Q'} w_{Q}(Q')\int|\widehat{g}_{\alpha}\ast\widehat{\mu_{Q'}}|^2  \\
		&\lesssim \frac{1}{m(Q)}\sum_{Q'}w_{Q}(Q') \sum_{\alpha}\int |g_{\alpha}|^2 \mu_{Q'}^2\\
		&\lesssim \frac{1}{m(Q)}\sum_{\alpha}\int |g_{\alpha}|^2 w_{Q}
	\end{align*}
\end{proof}

Now we can reduce Theorem \ref{transversal} to  a bilinear decoupling on two $\frac{N_2}{N_1}$-diameter caps.

\begin{lem}\label{reduce}
Theorem~\ref{bilinear decoupling} is equivalent to Proposition~\ref{transversal}.
\end{lem}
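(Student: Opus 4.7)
The plan is to prove both implications of the equivalence; the forward direction (Theorem $\Rightarrow$ Proposition) is a trivial specialization, while the substantive direction (Proposition $\Rightarrow$ Theorem) reduces the general two-scale decoupling to the two-cap case by cap decomposition and an application of Lemma~\ref{L2 orthogonality}.

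For Theorem $\Rightarrow$ Proposition: if $f_j$ is supported in a single cap $\tau_j$ satisfying the hypotheses of Proposition~\ref{transversal}, it also satisfies the hypotheses of Theorem~\ref{bilinear decoupling} (namely $|\xi|\sim 1$ for $j=1$ and $|\xi|\sim N_2/N_1$ for $j=2$), and the $\theta$-pieces with $\theta\not\subset\tau_j$ are identically zero. Hence Theorem~\ref{bilinear decoupling}'s bound specializes directly to Proposition~\ref{transversal}'s.

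For Proposition $\Rightarrow$ Theorem, the plan is to decompose each $f_j$ on the intermediate scale $N_2/N_1$. Cover $\{|\xi|\sim 1\}$ by finitely overlapping caps $\tau_1$ of radius $N_2/N_1$ (there are $\sim (N_1/N_2)^d$ of these), and cover $\{|\xi|\sim N_2/N_1\}$ by $O(1)$ caps $\tau_2$ of the same radius. Writing $f_j=\sum_{\tau_j}f_{j,\tau_j}$, we have
\begin{equation*}
    Ef_1\cdot Ef_2=\sum_{\tau_1,\tau_2}Ef_{1,\tau_1}\cdot Ef_{2,\tau_2}.
\end{equation*}
The key step is to apply Lemma~\ref{L2 orthogonality} to this expansion. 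The Fourier support of $Ef_{1,\tau_1}\cdot Ef_{2,\tau_2}$ lies in the sum set $\tau_1+\tau_2\subset\RR^{d+1}$, which fits in a cube of side $\rho\sim N_2/N_1$ centered near $(\xi_{\tau_1}+\xi_{\tau_2},|\xi_{\tau_1}|^2+|\xi_{\tau_2}|^2)$. Because $\tau_2$ lies in a ball of radius $N_2/N_1$ about the origin, the spatial center $\xi_{\tau_1}+\xi_{\tau_2}$ determines $\tau_1$ up to $O(1)$ neighbors, so these frequency cubes are finitely overlapping. All sides of $\Omega$ are at least $N_1^2\geq N_1/N_2=\rho^{-1}$, so the hypotheses of Lemma~\ref{L2 orthogonality} are met and
\begin{equation*}
    \|Ef_1\cdot Ef_2\|_{L^2_{avg}(w_\Omega)}^2\lesssim\sum_{\tau_1,\tau_2}\|Ef_{1,\tau_1}\cdot Ef_{2,\tau_2}\|_{L^2_{avg}(w_\Omega)}^2.
\end{equation*}

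Now apply Proposition~\ref{transversal} to each pair $(\tau_1,\tau_2)$ (whose hypotheses hold by construction), and collapse the double sum by Fubini:
\begin{equation*}
    \sum_{\tau_1,\tau_2}\prod_{j=1}^2\sum_{\theta\subset\tau_j}\|Ef_{j,\theta}\|_{L^4_{avg}(w_\Omega)}^2=\prod_{j=1}^2\sum_\theta\|Ef_{j,\theta}\|_{L^4_{avg}(w_\Omega)}^2.
\end{equation*}
Taking square roots yields the conclusion of Theorem~\ref{bilinear decoupling}, with the factor $D_{\lambda,N_1,N_2}^{1/2}$ as in~\eqref{notationconvience}.

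The main technical point to verify is the finite-overlap claim for the frequency cubes. It rests on the asymmetric setup: since $f_2$'s support $\{|\xi|\sim N_2/N_1\}$ has diameter $\sim N_2/N_1$, the addition map $(\tau_1,\tau_2)\mapsto \xi_{\tau_1}+\xi_{\tau_2}$ is essentially injective at the scale $N_2/N_1$, which is precisely the scale needed so that Lemma~\ref{L2 orthogonality} applies on $\Omega$. The time-direction constraint is automatic since each cap contributes at most $N_2/N_1$ of time-frequency extent, matching the spatial scale.
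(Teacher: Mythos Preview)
Your proof is correct and follows essentially the same approach as the paper's. The only cosmetic difference is that the paper does not bother decomposing $f_2$ at all (since its support already sits in a single $\frac{N_2}{N_1}$-cap), whereas you decompose both $f_1$ and $f_2$ and then note that $f_2$ needs only $O(1)$ caps; after that, both arguments invoke Lemma~\ref{L2 orthogonality} at scale $\frac{N_2}{N_1}$ and apply Proposition~\ref{transversal} cap by cap.
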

\begin{proof}
Let $f_{1}, f_{2}$ be as in Theorem \ref{bilinear decoupling}.
Then $f_1 =\sum_{|\tau|=\frac{N_2}{N_1}} f_{1,\tau}$ and  $f_{1,\tau}$'s are supported on finitely overlapping caps of diameter $\frac{N_2}{N_1}$. 

 Since $|f_2|$ is supported in a cap of diameter $\frac{N_2}{N_1}$, the supports of $\{\widehat{Ef}_{1,\tau} \ast \widehat{Ef_2}\}_{\tau}$ are in finitely overlapping cubes of length $\frac{N_2}{N_1}$. Since the scale of $\Omega$ is larger than $N_{1}/N_{2}$, i.e. it contains a ball of radius $>N_{1}/N_{2}$, By Lemma \ref{L2 orthogonality}, 
	\begin{align*}
		\fint_{\Omega}|Ef_1 Ef_2|^2w_{\Omega} dx & \leq\sum_{|\tau|=\frac{N_2}{N_1}} \left| \fint_{\Omega} Ef_{1,\tau}Ef_2\right|^2 w_{\Omega} dx\\
	\end{align*}
Now apply Proposition \ref{transversal} for $f_{1,\tau}$ and $ f_{2}$ for each $\tau$, Theorem \ref{bilinear decoupling} follows. 
\end{proof}

\section{{ Transversality}}\label{sectionpro}
Let $f_{1}, f_{2}$ be as in Proposition \ref{transversal}, then $f_{1}$ is supported around $(0,0,\dots,0,1,1)$ and  $f_{2}$ is supported around $(0,0,\dots,0)$. The main goal of this section is to explore the transversality between $(0,0,\dots,0,1)$ and $(0,0,0,\dots,0)$, or more precisely, the transversality between the unit normal vectors of the truncated parabola at these two points. The main lemma in this section  is  Lemma \ref{projection to 2D} below, and Corollary  \ref{Tcor} which essentially follows from Lemma \ref{projection to 2D}.

We first introduce some basic notation. Let $(e_{1},\dots,e_{d})$ be the standard basis of $\RRR^{d}$. We will encounter caps of radius $v$ around $(0,0,\dots,0)$ and $(0,\dots,0,1,1)$ on the parabola. Note around those two points, when $v$ is small (which is always the case in our work),  one may view those caps  as their natural  projection to $\RRR^{d-1}$.  And their image is essentially a square/cap of radius $v$. {  We say that a $(v,v^{2})$-plate is a $d$-dimensional rectangle with the short side on $e_{d-1}$ direction such that its image under under the orthogonal projection to $R^{d-1}$ is a $v\times v\times \cdots \times v \times v^{2}$-rectangle.}

\begin{lem}\label{projection to 2D}
Given $|\upsilon|< 1$, let $f_1$ be a function supported on a cap of radius $\upsilon$, centered at $(0,\dots, 0, 1,1)$ on the truncated parabola $P$,  and let $f_2$ be a function supported on a cap of radius $\upsilon$ centered at $(0,\dots,0,0,0)$ on the paraboloid. For a covering $\{\tau_i\}$ of $supp \, \, f_i$ with $(\upsilon,\upsilon^2)$--plates, with the shorter side on $e_{d-1}$ direction. We have the following decoupling inequality,  for any $R> \upsilon^{-2}$, 
	\begin{equation}\label{1ddecouple}
		\int |Ef_1 Ef_2|^2 w_{B_R}  \lesssim \sum_{\tau_1, \tau_2}\int |Ef_{1,\tau_1} Ef_{2,\tau_2}|^2 w_{B_R}. 
	\end{equation}
\end{lem}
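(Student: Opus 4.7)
To prove Lemma~\ref{projection to 2D}, I intend to establish a Plancherel-type orthogonality statement for the family $\{Ef_{1,\tau_1}\cdot Ef_{2,\tau_2}\}_{\tau_1,\tau_2}$ on $B_R$ with $R>\upsilon^{-2}$, and then invoke a box-shaped version of Lemma~\ref{L2 orthogonality}. Expanding $Ef_1\cdot Ef_2=\sum_{\tau_1,\tau_2}Ef_{1,\tau_1}\cdot Ef_{2,\tau_2}$, the Fourier support of each summand is contained in the Minkowski sum $\tau_1+\tau_2$ of two patches of $P\subset\mathbb{R}^{d+1}$, so the task reduces to verifying that these Fourier supports are finitely overlapping in an appropriate box system on $\mathbb{R}^{d+1}$ dual to boxes contained in $B_R$.

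The geometric inputs are as follows. Each plate $\tau_j^{(n)}$ is a slab of $\xi_{d-1}$-width $\upsilon^2$ inside a $\upsilon$-cap on $P$, so the Minkowski sum $\tau_1^{(n)}+\tau_2^{(m)}$ has $e_{d-1}$-extent $O(\upsilon^2)$ centered near $(n+m)\upsilon^2$ and extent $O(\upsilon)$ in each of the remaining coordinates. Different values of $n+m$ give essentially disjoint $e_{d-1}$-intervals, yielding finite overlap in the $e_{d-1}$ direction. The subtler step is to separate pairs $(n,m)$ and $(n',m')$ sharing a common value of $n+m$. Here the title of the lemma is suggestive: the tangent planes to $P$ at $(0,\ldots,0,1,1)$ and at the origin have transverse normals $(0,\ldots,0,2,-1)$ and $-e_{d+1}$, and projecting the Minkowski sums onto the $(e_{d-1},e_{d+1})$-slice of $\mathbb{R}^{d+1}$ reduces the separation question to a bilinear $L^2$ orthogonality for two transversally placed 1D parabolic arcs --- a Fefferman--C\'ordoba-type observation to the effect that the Fourier supports of products of such arc-supported parabola extensions are essentially disjoint in $\mathbb{R}^2$.

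The main obstacle is turning this ``projection to 2D'' heuristic into a rigorous finite-overlap statement in $\mathbb{R}^{d+1}$: one must group the pairs $(\tau_1,\tau_2)$ by their index $n+m$, within each group exploit the $(e_{d-1},e_{d+1})$-projection to separate them, and verify that the resulting box-system has bounded overlap at the scale $\upsilon^2$ detectable by $B_R$. Once this is in place, the box version of Lemma~\ref{L2 orthogonality} (whose proof is identical to the cube version, substituting dual-box bumps for cube bumps) delivers the conclusion, with the threshold $R>\upsilon^{-2}$ being precisely what is required for the dual physical box of side $\upsilon^{-2}$ in the $e_{d-1}$-direction to fit inside $B_R$.
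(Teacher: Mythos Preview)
Your plan is correct and coincides with the paper's argument. The paper expands $|Ef_1Ef_2|^2$ quadrilinearly and shows that the cross terms $\int Ef_{1,\tau_1}Ef_{2,\tau_2}\overline{Ef_{1,\tau_3}Ef_{2,\tau_4}}\,\mu_{B'}^2$ vanish unless $\tau_1\approx\tau_3$ and $\tau_2\approx\tau_4$, which is exactly the dual formulation of your finite-overlap statement for the Minkowski sums $\tau_1+\tau_2\subset\mathbb{R}^{d+1}$; the key computation---that $\xi_{1,d-1}+\xi_{2,d-1}$ fixes $n+m$ and then $|\xi_1|^2+|\xi_2|^2\approx 1+2n\upsilon^2+O(\upsilon^2)$ fixes $n$---is identical to your ``projection to the $(e_{d-1},e_{d+1})$-plane'' step, and the paper likewise identifies this as the 1D bilinear/$L^4$ orthogonality mechanism.
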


\begin{rem} We thank J. Ramos for pointing out that Lemma \ref{projection to 2D} is a particular case of Proposition 2 in his  work \cite{ramos2016trilinear}.  We still write a proof in this paper for clarity.
\end{rem}

\begin{proof}
The proof is similar to the proof of the $L^{4}$ Strichartz estimate on the one  dimensional torus. 
From the inequality~\ref{weight}, we only need to prove that $$\int_{B'} |Ef_1 Ef_2|^2   \lesssim \sum_{\tau_1, \tau_2}\int |Ef_{1,\tau_1} Ef_{2,\tau_2}|^2 \mu_{B'}^2$$ for all translation $B'$ of $B_R$.

\begin{equation}\label{tempproject}
\int_{B'} |Ef_{1}Ef_{2}|^{2}\leq  \sum_{\tau_{1},\tau_{2},\tau_{3},\tau_{4}}\int_{B'}  Ef_{1,\tau_{1}}Ef_{2,\tau_{2}}\overline{Ef_{1,\tau_{3}}} \overline{Ef_{2,\tau_{4}}}\mu_{B'}^2
\end{equation}

Let $\xi_{i}\in \tau_{i}$, $\xi_{i}=(\xi_{i,1},..,\xi_{i,d-1}, \sum_{j=1}^{d-1}(\xi^{j}_{i})^{2})\equiv (\bar{\xi}_{i},\xi_{i,d-1},|\bar{\xi}_{i}|^{2}+(\xi_{i}^{d-1})^{2})$, $i=1,2,3,4$. 
We have 
	\begin{align}\label{t_{0}} 
		\begin{array}{ll}
			|\bar{\xi}_{i}|\lesssim \upsilon, \hspace{.5cm}& i=1,2,3,4.\\
			|\xi_{i,d-1}-1|\lesssim \upsilon, \hspace{.5cm}&i=1,3.\\
			|\xi_{i,d-1}|\lesssim \upsilon, \hspace{.5cm}&i=2,4.
		\end{array}
	\end{align}

 Essentially, for any $\tau_{1},\tau_{2},\tau_{3},\tau_{4}$ such that $$\int Ef_{1,\tau_{1}}Ef_{2,\tau_{2}}\overline{Ef_{1,\tau_{3}}} \overline{Ef_{2,\tau_{4}}}\mu_{B'}^2\neq 0,$$ one must have for some $\xi_{i}\in \tau_{i}$,
	\begin{equation}\label{t1}
		\begin{aligned}
			\xi_{1}-\xi_{3}=\xi_{2}-\xi_{4}+O(R^{-1}),\\
			|\xi_{1}|^{2}-|\xi_{3}|^{2}=|\xi|_{2}^{2}-|\xi|_{4}^{2}+O(R^{-1}),\\
		\end{aligned}
	\end{equation}
and the second formula in \eqref{t1} implies
	\begin{equation}
		(\xi_{1,d-1}-\xi_{3,d-1})(\xi_{1,d-1}+\xi_{3,d-1})=O(|\xi_{2}|^{2}+|\xi_{4}|^{2})+O(|\bar{\xi}_{1}|^{2}+|\bar{\xi}_{3}|^{2})+O(R^{-1}).
	\end{equation}
Plugging into \eqref{t_{0}}, one has $|\xi_{1,d-1}-\xi_{3,d-1}|\lesssim v^{2}$, which again implies $|\xi_{2,d-1}-\xi_{4,d-1}|\lesssim v^{2}$.

To summarize, $\int Ef_{1,\tau_{1}}Ef_{2,\tau_{2}}\overline{Ef_{1,\tau_{3}}} \overline{Ef_{2,\tau_{4}}}\mu_{B'}^2\neq 0$ implies the distance between $\tau_{1}$ and $\tau_{3}$ and the distance between $\tau_{2}$ and $\tau_{4}$ are both bounded by $v^{2}$, which essentially means $\tau_{i}=\tau_{i+2}$, $i=1,2$. Applying this fact to \eqref{tempproject}, Lemma \ref{projection to 2D} follows.
\end{proof}

\begin{rem}\label{Ktran}
A quantitative version of estimate \eqref{1ddecouple} can be stated as follows:   assume that the support of $f_{1}$ is centered at $(0,1/K, (1/K)^{2})$ rather than $(0,0,1)$, from the proof we can attain the same estimate as in \eqref{1ddecouple} by introducing an additional constant $K$,
	\begin{align}\label{1ddecouplemode}
		\int |Ef_1 Ef_2|^2 w_{B_R}  \lesssim  K\sum_{\tau_1, \tau_2}\int |Ef_{1,\tau_1} Ef_{2,\tau_2}|^2 w_{B_R} 	
	\end{align}
Indeed, the proof  essentially only relies on the fact that for $\xi_{i}\in \mbox{supp } f_{i}, i=1,2$, the difference between the $d-1$ components is at least $\frac{1}{K}$. Similar arguments also hold for estimate in Lemma \ref{Testimate},  Cor \ref{Tcor} below.
\end{rem}

\begin{rem}\label{p2drem}
We remark that for  any $\alpha<\upsilon$, a function which is supported on a   cap of radius  $\alpha$ can be naturally understood as a function supported on a  cap of radius $\upsilon.$
\end{rem}

Lemma \ref{projection to 2D} facilitates the decomposition of caps of radius $v$ into plates of size $(v,v^{2})$, we can further decompose those into caps of radius $v^{2}$.
\begin{lem}\label{Testimate}
With same notation as in Lemma~\ref{projection to 2D},  $R\geq \upsilon^{-2}$, let $supp f_{i}$ be the covered by finitely overlapping caps $\theta_{i}$ of radius $v^{2}$, $i=1,2$. Then 
	\begin{equation}\label{formulatestimate}
		\int |Ef_{1}Ef_2|^2w_{B_R} \lesssim \upsilon^{-(d-1)} \sum_{|\theta_i|=\upsilon^2}\int|Ef_{1,\theta_1}Ef_{2,\theta_2}|^2w_{B_R}.
	\end{equation}
\end{lem}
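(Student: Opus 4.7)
My plan is to combine the lossless plate decomposition of Lemma~\ref{projection to 2D} with a Cauchy--Schwarz step on one factor and an $L^2$-orthogonality step via Lemma~\ref{L2 orthogonality} on the other. Applying Cauchy--Schwarz to both factors would cost $\upsilon^{-2(d-1)}$ and fall short of the desired bound; the missing factor of $\upsilon^{d-1}$ is recovered by exploiting the orthogonality of Fourier supports of the other factor.

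First I would invoke Lemma~\ref{projection to 2D} to obtain, at no cost,
\begin{equation*}
\int |Ef_1 Ef_2|^2 w_{B_R} \lesssim \sum_{\tau_1, \tau_2} \int |Ef_{1,\tau_1} Ef_{2,\tau_2}|^2 w_{B_R},
\end{equation*}
where each $\tau_i$ is a $(\upsilon, \upsilon^2)$-plate in a finitely overlapping cover of $\operatorname{supp} f_i$. A volume count shows that each such plate contains $M \sim \upsilon^{-(d-1)}$ caps of radius $\upsilon^2$. Fixing a pair $(\tau_1, \tau_2)$, I would apply Cauchy--Schwarz to the decomposition $Ef_{2,\tau_2} = \sum_{\theta_2 \subset \tau_2} Ef_{2,\theta_2}$ to deduce
\begin{equation*}
\int |Ef_{1,\tau_1} Ef_{2,\tau_2}|^2 w_{B_R} \leq M \sum_{\theta_2 \subset \tau_2} \int |Ef_{1,\tau_1} Ef_{2,\theta_2}|^2 w_{B_R}.
\end{equation*}

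Next, for each fixed $\theta_2$ I would decompose $Ef_{1,\tau_1} = \sum_{\theta_1 \subset \tau_1} Ef_{1,\theta_1}$ and invoke Lemma~\ref{L2 orthogonality}. The essential observation is that the Fourier transform of each product $Ef_{1,\theta_1} Ef_{2,\theta_2}$ is supported in a cube of side $\sim \upsilon^2$ centered near $(\xi_1^0 + \xi_2^0,\ |\xi_1^0|^2 + |\xi_2^0|^2)$, with $\xi_i^0$ the center of $\theta_i$; as $\theta_1$ varies with $\theta_2$ fixed, these cubes translate only through $\xi_1^0$ and therefore remain a finitely overlapping family of $\upsilon^2$-cubes. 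Since $R \geq \upsilon^{-2}$, Lemma~\ref{L2 orthogonality} applies with $\rho = \upsilon^2$ and yields
\begin{equation*}
\int |Ef_{1,\tau_1} Ef_{2,\theta_2}|^2 w_{B_R} \lesssim \sum_{\theta_1 \subset \tau_1} \int |Ef_{1,\theta_1} Ef_{2,\theta_2}|^2 w_{B_R}.
\end{equation*}
Chaining these three displays and summing over the plate pairs $(\tau_1, \tau_2)$ produces~\eqref{formulatestimate} with the desired constant $M \sim \upsilon^{-(d-1)}$.

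The main technical point is the finite-overlap verification in the orthogonality step, for which the key input is that the product of two paraboloid-localized extensions has Fourier support essentially determined by the centers of the underlying caps; once $\theta_2$ is fixed, the remaining dependence on $\theta_1$ is a translation by $\theta_1$'s center, so finite overlap of the $\{\theta_1\}$ passes directly to finite overlap of the $\{\theta_1 + \theta_2\}$.
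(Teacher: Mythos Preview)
Your proof is correct and follows essentially the same route as the paper: apply Lemma~\ref{projection to 2D} to pass to plates at no cost, use Cauchy--Schwarz on one factor to pay $\upsilon^{-(d-1)}$, then use $L^2$ orthogonality (Lemma~\ref{L2 orthogonality}) on the other factor at no cost. Your justification of the finite-overlap condition for the orthogonality step is in fact more explicit than the paper's, which simply cites Lemma~\ref{L2 orthogonality} and Lemma~\ref{reduce}.
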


\begin{proof}
Clearly, we need only to prove \eqref{formulatestimate} for every ball of radius $\upsilon^{-2}$ contained in $B_{R}$, and then sum them together. (This is in the same principle of parallel decoupling, Lemma \ref{parallel}.)

Fix a pair of $(\upsilon, \upsilon^2)$--plates  $\tau_1, \tau_2$. 
\begin{equation}\label{hhhh}
\begin{aligned}
\int |Ef_{1,\tau_1} Ef_{2,\tau_2}|^2 w_{B_R} &= \int |\sum_{\theta_{2}\subset \tau_2, |\theta_2|=\upsilon^2} Ef_{1,\tau_1}Ef_{2, \theta_2}|^2 w_{B_R} \\
&\leq \upsilon^{-(d-1)} \sum_{\theta_2\subset \tau_2, |\theta_2|=\upsilon^2}|Ef_{1,\tau_1}Ef_{2,\theta_2}|^2 w_{B_R}\\
&\lesssim \sum_{\theta_j\subset \tau_j, |\theta_j|=\upsilon^2}|Ef_{1,\theta_1}Ef_{2,\theta_2}|^2 w_{B_R}
\end{aligned}
\end{equation}
The last inequality follows from Lemma~\ref{L2 orthogonality} and Lemma~\ref{reduce}.

\end{proof}

\begin{rem}\label{easyrem}
Similar to Remark \ref{p2drem}, {for $\upsilon^{2}<\alpha<\upsilon$,  a cap of scale $\upsilon$ naturally lies in a cap of scale $\sqrt{\alpha}$.} Thus if we let  $f_1$ be a function supported on a cap of radius $\alpha$, centered at $(0,\dots, 0, 1,1)$ on the paraboloid and we let  $f_2$ be a function supported on a cap of radius $\alpha$ centered at $(0,\dots,0,0,0)$ on the paraboloid, then by arguing similar to the proof of Lemma \ref{Testimate}, we have for $R\geq 
\alpha^{-1}$, 
	\begin{equation}\label{formulatestimatemode}
		\int |Ef_{1}Ef_2|^2w_{B_R} \lesssim (\upsilon/\alpha)^{(d-1)} \sum_{|\theta_i|=\alpha}\int|Ef_{1,\theta_1}Ef_{2,\theta_2}|^2w_{B_R}.
	\end{equation}

\end{rem}

If we directly use Holder inequality for all caps in the support of $f_{i}$ to estimate as in \eqref{hhhh}, then the interpolation in the proof of Lemma \ref{Testimate} will give us a constant $v^{-d}$ rather than $v^{-(d-1)}$ in \eqref{formulatestimate}, since one has  $v^{-d}$ caps for each $f_{i}$. The bilinear transversality, i.e. the transversality between $(0,0,\dots,0)$ and $(0,\dots,0,1,1)$ helps in reducing the dimension by one since  in one direction we can use $L^{4}$ orthogonality, as shown in Lemma \ref{projection to 2D}. Thus here we are able to improve the constant in \eqref{formulatestimate} to $v^{-(d-1)}$.

\begin{cor}\label{Tcor}
Same notation as in Lemma~\ref{projection to 2D}, there exists a constant $C$, such that for any $\upsilon$, $\delta$, $R^{-1}\leq \delta \leq \upsilon$, 
	\begin{align*}
		\int |Ef_{1}Ef_2|^2 w_{B_R} \lesssim   \left(\frac{\upsilon}{\delta}\right)^{d-1}\Big|\frac{\log \delta}{\log \upsilon}\Big|^{C} \sum_{|\theta_i|=\delta}\int |Ef_{1,\theta_1}Ef_{2,\theta_2}|^2 w_{B_R}.
	\end{align*}
\end{cor}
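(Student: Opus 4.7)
The corollary follows by iterating the bilinear decoupling of Lemma~\ref{Testimate}. Choose a decreasing sequence of scales $\upsilon=\upsilon_0 > \upsilon_1 > \cdots > \upsilon_k = \delta$ with $\upsilon_j^2 \leq \upsilon_{j+1} \leq \upsilon_j$ at every step; for instance take $\upsilon_{j+1}=\upsilon_j^2$ as long as $\upsilon_j^2 > \delta$ and set $\upsilon_k=\delta$ on the final step. Since each squaring halves the log of the log-scale, the number of iterations needed is $k \lesssim \log_2(|\log\delta|/|\log\upsilon|)$. At step $j$, apply Lemma~\ref{Testimate} (or its generalization in Remark~\ref{easyrem}) at the current scale $\upsilon_j$ to each pair of caps of radius $\upsilon_j$ produced by the previous step, decoupling them into sub-caps of radius $\upsilon_{j+1}$ with a loss of $(\upsilon_j/\upsilon_{j+1})^{d-1}$ and an absolute multiplicative constant $C_0$ arising from the weight/$L^2$-orthogonality bookkeeping. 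The transversality hypothesis of Lemma~\ref{projection to 2D} is preserved at every step, because each pair of sub-caps remains inside the original $\upsilon$-caps centered at $(0,\dots,0,1,1)$ and $(0,\dots,0,0,0)$, so their centers stay separated on the paraboloid by a distance $\sim 1$; the quantitative form in Remark~\ref{Ktran} then gives a uniformly bounded per-step constant.

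After the $k$ iterations, the scale-dependent losses telescope:
\begin{equation*}
\prod_{j=0}^{k-1}\left(\frac{\upsilon_j}{\upsilon_{j+1}}\right)^{d-1}=\left(\frac{\upsilon_0}{\upsilon_k}\right)^{d-1}=\left(\frac{\upsilon}{\delta}\right)^{d-1},
\end{equation*}
while the absolute constants accumulate to $C_0^{\,k} \lesssim (|\log\delta|/|\log\upsilon|)^{\log_2 C_0}$, which yields the polylogarithmic factor $|\log\delta/\log\upsilon|^C$ with $C=\log_2 C_0$. Combining both factors gives exactly the estimate claimed.

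The main technical obstacle is to justify the variant of Lemma~\ref{Testimate} claimed by Remark~\ref{easyrem}: that one can decouple caps of radius $\upsilon_j$ directly into sub-caps of any intermediate radius $\upsilon_{j+1}\in[\upsilon_j^2,\upsilon_j]$ with the loss $(\upsilon_j/\upsilon_{j+1})^{d-1}$, rather than being forced into the strict square $\upsilon_{j+1}=\upsilon_j^2$. This flexibility is what allows the last iteration to land exactly on $\delta$ instead of overshooting to some $\upsilon^{2^k}<\delta$ and contaminating the telescoping constant. The argument is obtained by replaying the proof of Lemma~\ref{Testimate} but subdividing the $(\upsilon_j,\upsilon_j^2)$--plates into $(\upsilon_j,\upsilon_{j+1})$--pieces and applying Cauchy--Schwarz with the correct number of pieces in the $e_{d-1}$-direction. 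The conditions $R\geq \upsilon_j^{-2}$ required by Lemma~\ref{projection to 2D} at each scale are met because the iteration ultimately uses these inequalities only on the final scale $\upsilon_{k-1}\geq\sqrt{\delta}$, which is compatible with the hypothesis $R\geq\delta^{-1}$ of the corollary.
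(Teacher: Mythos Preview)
Your proposal is correct and follows essentially the same approach as the paper: iterate Lemma~\ref{Testimate} along the dyadic-squaring sequence $\upsilon_j=\upsilon^{2^j}$, use Remark~\ref{easyrem} on the last step to land exactly at $\delta$, telescope the scale losses to $(\upsilon/\delta)^{d-1}$, and absorb the $O(\log_2(|\log\delta|/|\log\upsilon|))$ accumulated absolute constants into the polylogarithmic factor. Your explicit remarks on preservation of transversality (via Remark~\ref{Ktran}) and on why $R\geq\delta^{-1}$ suffices at every step are points the paper leaves implicit, but the argument is otherwise identical.
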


\begin{proof}
The proof is most clear when $\delta=\upsilon^{2^{n}}$ for some $n$, let us first handle this case and then go to the general case.
One may use  induction. (This induction, however, does not rely on parabolic rescaling.)  If $n=0$, there is nothing to prove. 

Assume the result holds for the case  $n=k$, let us turn to the  case $n=k+1$, where  $\delta=v^{2^{k+1}}, \, \delta^{1/2}=v^{2^{k}}$, thus by induction assumption, we have
	\begin{equation}\label{f1}
		\int |Ef_{1}Ef_2|^2 w_{B_R} \lesssim    \left(\frac{\upsilon}{\delta^{1/2}}\right)^{d-1}2^{Ck} \sum_{|\eta_i|=\delta^{1/2}}\int |Ef_{1,\eta_1}Ef_{2,\eta_2}|^2 w_{B_R}.
	\end{equation}
Now note $R\geq (\delta^{-1/2})^{2}$, by Lemma \ref{Testimate}, we have for each pair $(\eta_{1}, \eta_{2})$ in \eqref{f1} that 
	\begin{equation}\label{f2}
		\int |Ef_{1,\eta_1}Ef_{2,\eta_2}|^2 w_{B_R} \lesssim (\delta^{1/2})^{-(d-1)}\sum_{\theta_{i}\subset\eta_{i}, |\theta_{i}|=\delta}\int |Ef_{1,\theta_1}Ef_{2,\theta_2}|^2 w_{B_R}.
	\end{equation}
The case $n=k+1$ clearly follows if one plugs \eqref{f2} into \eqref{f1}, taking  the constant $C$ large enough.

Now we turn to the general case, we only need to work on the case $\vv^{2^{n+1}}<\delta<\vv^{2^{n}}$.
Recall that previously, when $\delta=\vv^{2^{n}}$, we used induction as $\vv\rightarrow v^{2}\rightarrow \vv^{2^{2}}\dots\rightarrow \vv^{2^{n}}=\delta$, and in each step we used Lemma \ref{Testimate} to finish the induction $\vv^{2^{k}}\rightarrow \vv^{2^{k+1}}$.

 In the case  $\vv^{2^{n+1}}<\delta<\vv^{2^{n}}$ we have $\vv^{2^{n}}<\delta^{1/2}$,  and we use induction  as  before for 
$\vv\rightarrow \vv^{2}\rightarrow \vv^{2^{2}}\dots\rightarrow \vv^{2^{n}},$ and we use \eqref{formulatestimatemode} to use induction again  from $\vv^{2^{n}}$ to $\delta$. This ends the proof.
\end{proof}

\section{Proof of Lemma \ref{big lambda}, Lemma \ref{base case} and  Lemma \ref{InductionN2}}\label{section3lemma}

We are now prepared to use transversality to prove Lemma \ref{big lambda}, Lemma \ref{base case}, and  Lemma \ref{InductionN2}.
Recall Lemma \ref{big lambda} concerns $\tilde{K}(\lambda, N_{1}, N_{2})$ defined in \eqref{best3}. Furthermore, Lemma \ref{base case} and Lemma \ref{InductionN2} refer to $K(\lambda, N_{1}, N_{2})$ defined in \eqref{best2}.

\subsection{Proof of Lemma \ref{big lambda}}\label{ss1}
For convenience of notation, we let $\Omega_{1}:=[0, N_{1}^{2}]\times [0, \lambda N_{1}]^{d}$. Note that one can use finite overlapped balls of radius $N_{1}^{2}$ to cover $\Omega_{1}$ since $\lambda \geq N_{1}$. We want to prove 
\begin{equation}\label{de1}
\|Ef_{1} Ef_{2}\|_{L^2_{avg}(\omega_{\Omega_{1}})} \lesssim_{\epsilon} \lambda^{d/2}N_{2}^{\epsilon}\frac{N_{2}^{d-1}}{N_{1}}\prod_{j=1}^2\left(\sum_{|\theta|=\frac{1
 }{\lambda N_1}}\|Ef_{j,\theta}\|_{L^4_{avg}(w_{\Omega_{1}})}^2\right)^{1/2}.
\end{equation}
We first apply Corollary \ref{Tcor} with $\delta= N_1^{-2}$ , $\upsilon = \frac{N_2}{N_1}$, $R= N_{1}^{2}$.   Note that  $\delta\leq v$. Then we have 
\begin{equation}\label{1111}
\begin{aligned}
\int |Ef_1 Ef_2|^2w_{B_{ N^{2}_{1}}}&\lesssim (N_1N_2)^{d-1}\Big|\frac{\log N_1}{\log N_1 -\log  N_2}\Big|^{C}  \sum_{|\theta_j|= \frac{1}{N_1^2}} \int |Ef_{1, \theta_1} Ef_{2,\theta_2}|^2 w_{B_{N^{2}_{1}}}\\
&\lesssim (N_1 N_2)^{d-1}N_{2}^{\epsilon} \sum_{|\theta_j|= \frac{1}{N_1^2}} \prod_{j=1}^2\|Ef_{j,\theta_j}\|_{L^4(w_{B_{ N^{2}_{1}}})}^2. 
\end{aligned}
\end{equation}
\begin{rem}\label{rrr}

We avoid the case when $N_1=N_2$, and thus  $\ln N_{1}-\ln N_{2}=0$,   by first  decomposing caps of diameter $N_{2}/N_{1}$ into caps of  diameter $N_{2}/2N_{1}$ with loss of a fixed constant, then continuing with the proof as above. In all of the text that follows, one may assume, without loss of generality, that  $N_{1}\geq 2N_{2}$.

\end{rem}

Via  the principle of parallel decoupling, Lemma \ref{parallel}, or  by  summing different $B_{N_{1}^{2}}$ together,  we have 
\begin{equation}\label{1dddd}
\begin{aligned}
\int |Ef_1 Ef_2|^2 w_{\Omega_{1}}&
&\lesssim (N_1 N_2)^{d-1}N_{2}^{\epsilon}  \sum_{|\theta_j|= \frac{1}{N_1^2}} \prod_{j=1}^2\|Ef_{j,\theta_j}\|_{L^4(w_{\Omega_{1})}}^2. 
\end{aligned}
\end{equation}

Next we would like to show that 
\begin{equation}\label{min}
\|Ef_{j,\theta_j}\|_{L^4(w_{\Omega_1})}^2 \leq (\frac{\lambda}{N_1})^{d/2}\sum_{\theta_j' \subset \theta_j, |\theta_j'|=\frac{1}{\lambda N_1}} \|Ef_{j,\theta_j'}\|_{L^4(w_{\Omega_1})}^2.
\end{equation}
It suffices to show 
$$\|Ef_{j,\theta_j}\|_{L^4_{avg}(\Omega_1)}^2 \leq (\frac{\lambda}{N_1})^{d/2}\sum_{\theta_j' \subset \theta_j, |\theta_j'|=\frac{1}{\lambda N_1}} \|Ef_{j,\theta_j'}\|_{L^4_{avg}(w_{\Omega_1})}^2$$
and sum up as in Lemma~\ref{L2 orthogonality}

Each function $Ef_{j,\theta_j'}$ is fourier supported in $\theta_j'$, in particular, fourier supported in a cylinder of radius $\frac{1}{\lambda N_1}$, height $\frac{1}{N_1^2}$. $\Omega_1$ is tiled by cylinders of radius $\lambda N_1$, height $N_1^2$ in $t$--direction.
The proof of Lemma~\ref{L2 orthogonality} works the same, 
\begin{align*}
\|Ef_{j,\theta_j}\|_{L^2_{avg}(\Omega_1)}^2&\lesssim \sum_{\theta_j'\subset \theta_j, |\theta_j'|=\frac{1}{\lambda N_1}} \|Ef_{j,\theta_j'}\|_{L^2_{avg}(w_{B_R})}^2 \\
&\lesssim \sum_{\theta_j'\subset \theta_j, |\theta_j'|=\frac{1}{\lambda N_1}} \|Ef_{j,\theta_j'}\|_{L^4_{avg}(w_{B_R})}^2
\end{align*}

For the $L^{\infty}$--estimate, we apply Cauchy Schwartz inequality:
\begin{align*}
\|Ef_{j,\theta_j}\|_{L^{\infty}(\Omega_1)}^2&\leq (\frac{\lambda}{N_1})^{d}\sum_{\theta_j'\subset\theta_j, |\theta_{j}'|=\frac{1}{\lambda N_1}} \|Ef_{j,\theta_j'}\|_{L^{\infty}(\Omega_1)}^2\\
&\lesssim  (\frac{\lambda}{N_1})^{d}\sum_{\theta_j'\subset\theta_j, |\theta_{j}'|=\frac{1}{\lambda N_1}} \|Ef_{j,\theta_j'}\|_{L^{4}_{avg}(w_{\Omega_1})}^2
\end{align*}
{ The last inequality is an application of Lemma~\ref{L infinity}. 
Note $f_{\theta_j'}$ is supported in a ball of scale $\frac{1}{\lambda N_{1}}$, and inside a box $C$ of size $ \frac{1}{N_1^2}\times \frac{1}{\lambda N_1} \times \cdots \times \frac{1}{\lambda N_1}$. We can make a affine transform of $C$ into a cube $Q^{*}$ of scale $\lambda_{N_{1}}$, which  on the physical side would transform $\Omega_{1}$ into a cube of scale $\lambda N_{1}$ . We apply Lemma~\ref{L infinity} after the affine transformation and then transform back. (Note in those setting, cube is no different than a ball.) }

We apply H\"{o}lder's inequality to conclude the argument. 

\subsection{Proof of Lemma \ref{base case}}\label{base}
Let $\lambda\leq N_1$.  We first note that  we can use{ finitely overlapping balls $B_{\lambda N_{1}}$} to cover $\Omega$ and that  $N_{2}\lesssim 1$.
Applying  Corollary \ref{Tcor} with $\delta= \frac{1}{\lambda N_1}$ and $\upsilon = \frac{N_2}{N_1}$ we have  
\begin{align*}
\int |Ef_1 Ef_2|^2w_{B_{\lambda {N_{1}}}}&\lesssim (\lambda N_2)^{d-1}\Big| \frac{\log \lambda + \log N_1}{\log N_1 - \log N_2}\Big|^C  \sum_{|\theta_j|= \frac{1}{\lambda N_1}} \int |Ef_{1, \theta_1} Ef_{2,\theta_2}|^2 w_{B_{\lambda N_{1}}}\\
&\lesssim (\lambda N_2)^{d-1}N_{2}^{\epsilon}  \sum_{|\theta_j|= \frac{1}{{\lambda N_1}}} \prod_{j=1}^2\|Ef_{j,\theta_j}\|_{L^4(w_{B_{\lambda {N_{1}}}})}^2.
\end{align*}
With parallel decoupling, Lemma \ref{parallel}, then the  desired estimate follows.  (As remarked in Remark \ref{rrr}, one can assume $N_{1}\geq 2N_{2}$.)

\subsection{Proof of Lemma \ref{InductionN2} }\label{subsectionani}
Let $\lambda \leq N_{1}$. 

We have the following two cases: 
	\begin{itemize}
		\item Case 1: $d=2$, $N_1\geq \lambda \geq \frac{N_1}{N_2}$,  and $N_{2}' = \Big(\frac{N_1}{\lambda}\Big)$,

		\item Case 2: $d\geq 3$, $N_{1}\geq \lambda \geq \frac{N_{1}}{N_{2}^{2}}$, and $N_{2}'=(\frac{N_{1}}{\lambda})^{1/2}.$
	\end{itemize}

 It is easy to check that we only need to show that 
	\begin{equation}\label{k3}
		 K(\lambda, N_1, N_2) \lesssim K(\lambda, N_1, N_2')\big(\frac{N_1}{N_2'}\frac{N_2}{N_1}\big)^{\frac{d-1}{2}}.
	\end{equation}
We claim that 
	\begin{equation}\label{dec1}
		\|Ef_{1}Ef_{2}\|_{L^{4}_{avg}(w_{B_{N_{1}^{2}}})}\lesssim (\frac{N_{2}/N_{1}}{N'_{2}/N_{1}})^{d-1/2}\prod_{j=1}^2\left(\sum_{|\theta|=\frac{N'_{2}}{ N_1}}\|Ef_{j,\theta}\|_{L^4_{avg}(w_{B_{N_{1}^{2}}})}^2\right)^{1/2}.
	\end{equation}
Since $\lambda\leq N_{1}$, we cover $B_{N_{1}^{2}}$with balls of radius $\lambda N_{1}$. Thus by parallel decoupling, to prove \eqref{dec1},  we only need to show 
	\begin{equation}\label{dec2}
		\|Ef_{1}Ef_{2}\|_{L^{4}_{avg}(w_{B_{\lambda N_{1}}})}\lesssim (\frac{N_{2}/N_{1}}{N_{2}'/N_{1}})^{d-1/2}\prod_{j=1}^2\left(\sum_{|\theta|=\frac{1}{\lambda N_1}}\|Ef_{j,\theta}\|_{L^4_{avg}(w_{B_{\lambda N_{1}}})}^2\right)^{1/2}.
	\end{equation}
Note  that since $\lambda N_{1}\geq \frac{1}{N'_{2}/N_{1}}$, estimate  \eqref{dec2} follows from Corollary \eqref{Tcor} by setting $\delta=N_{2}/N_{1}, \upsilon=N'_{2}/N_{1}$ via  interpolation and local constant arguments as in Section \ref{ss1}.

By the definition of 
$K(\lambda, N_{1},N_{2}) $,  we have  that for any $\theta_{1},\theta_{2}$ in \eqref{dec2}, 
	\begin{equation}\label{dec3}
		\|Ef_{1,\theta_{1}}Ef_{2,\theta_{2}}\|_{L^4_{avg}(w_{B_{\lambda N_{1}}})}\lesssim  \lambda^{d/2} K(\lambda, N_1, N'_2) \prod_{j=1}^2\left(\sum_{|\theta'_{j}|=\frac{1
 }{\lambda N_1}, \theta_{j}\subset \theta_{j}}\|Ef_{j,\theta'_{j}}\|_{L^4_{avg}(w_{\Omega})}^2\right)^{1/2}.
	\end{equation}
Plugging \eqref{dec3} into \eqref{dec2}, clearly \eqref{k3} follows.

\section{Induction procedure and proof of Lemma \ref{criticallll} }\label{sectionind}
To conclude the proof of Proposition \ref{transversal}, we are left with the proof of  Lemma \ref{criticallll}. For this lemma the proof relies on induction on $N_{2}$. The base case $N_{2}\lesssim 1$ is resolved by Lemma \ref{base case}, and by Lemma \ref{InductionN2}, so we need only to induct until $\lambda=\frac{(N_{2})^{d-1}}{N_{1}}$. 

Let $f_{1}, f_{2}$ be as in Lemma \ref{criticallll}. Applying  Lemma \ref{projection to 2D}, taking $v=N_{1}/N_{2}$ and  $R=N^{2}_{1}$, we could decouple the $\frac{N_2}{N_1}$ caps into $(\frac{N_2}{N_1},\frac{N_2^2}{N_1^2})$ plates without any loss, i.e. 
\begin{equation}\label{preinduction}
\int |Ef_1 Ef_2|^2 w_{B_{N_{1}^{2}}}  \lesssim \sum_{\tau_1, \tau_2}\int |Ef_{1,\tau_1} Ef_{2,\tau_2}|^2 w_{B_{N_{1}}^{2}}. 
\end{equation}
Here $\tau_{i}$ are plates as described in Lemma \ref{projection to 2D}. We focus on the case when $d=2$ in $\RR^3$, the high dimensional case would be explained in the end. When $d=2$, the underlying plates become strips.  We start with  some preparation before the induction.

\subsection{Preliminary preparation for the induction}
We fix a  pair {of $(\frac{N_2}{N_1},\frac{N_2^2}{N_1^2})$ strips} $\tau_1, \tau_2$ from estimate \eqref{preinduction}. We decompose $\tau_j$ into a union of $\frac{N_2}{KN_1}\times \frac{N_2^2}{N_1^2}$ strips $\{s_j\}$.

Using the notation {\it nonadj}   short for {\it nonadjacent}, and {\it adj}  short for {\it adjacent}, we   have 
\begin{align*}
|Ef_{\tau_j}|^2&=  \sum_{s_j}|Ef_{s_j}|^2+ \sum_{s_j, s_j' adj} |Ef_{s_j} Ef_{s_j'}| + \sum_{s_j, s_j' non adj}|Ef_{s_j}Ef_{s_j'}|\\
&\leq 10 \sum_{s_j}|Ef_{s_j}|^2 +\sum_{s_j, s_j' non adj}|Ef_{s_j}Ef_{s_j'}|\\
&= I_{j,1}+ I_{j,2}
\end{align*}

\begin{align}
\int |Ef_{\tau_1}Ef_{\tau_2}|^2 w_{B_{N_{1}^{2}}} &\leq \int |(Ef_{\tau_1}^2 -I_{1,1})(|Ef_{\tau_2}^2-I_{2,1})| +Ef_{\tau_1}^2 I_{2,1} + Ef_{\tau_2}^2 I_{1,1}+ I_{1,1} I_{2,1}w_{B_{N_{1}^{2}}}\\
&\lesssim \sum_{s_j, s_j' nonadj} \int |Ef_{s_1}Ef_{s_1'}Ef_{s_2}Ef_{s_2'}| w_{B_{N_{1}^{2}}} +\sum_{s_1, s_2}\int |Ef_{s_1}Ef_{s_2}|^2 w_{B_{N_{1}^{2}}}\label{firred}
\end{align}

The last inequality follows from Lemma~\ref{L2 orthogonality} and Lemma~\ref{reduce}.

 The reason why we want to have non-adjacent parts is that we would like transversality (after rescaling) on the other direction.  Formula \eqref{firred} will  the starting point of  our induction.

\medskip
 For the second term in \eqref{firred}, we will later directly use  induction ( not relying  on parallel rescaling) on $N_{2}$ and reduce everything to the known base   case $N_2 =1$.
 
 \medskip
 For the first term,  using Cauchy-Schwartz  
 \begin{equation}
\int |Ef_{s_1}Ef_{s_1'}Ef_{s_2}Ef_{s_2'}| w_{B_{N_{1}^{2}}} \leq \left(\int |Ef_{s_1}Ef_{s_1'}|^2 w_{B_{N_{1}^{2}}}\right)^{1/2}\left(\int |Ef_{s_2}Ef_{s_2'}|^2w_{B_{N_{1}^{2}}}\right)^{1/2}.
\end{equation}

We point out here that in what follows   we do not rely on the  bilinear transversality between $s_{1}$ and $s_{2}$ (or $s_{1}$ and $s_{2}'$), which is already handled in Lemma \ref{projection to 2D}. Instead  we will rely on the bilinear transversality between  $s_{1}$ and $s_{1}'$, (or $s_{2}, s_{2}'$), since they are not adjacent. This  transversality is most clear when one applies parabolic rescaling.

\medskip
Let us now turn to the term  $\int |Ef_{s_2}Ef_{s_2'}|^2 w_{\Omega}$, when $s_{2}, s_{2}'$ are non adjacent. The term with $s_{1},  s'_{1}$ is  handled similarly, though one may need to rotate the coordinates. 

\medskip
Finally we point out here   that $K$ would be chosen large later and   any (fixed) power of $K$ will  not impact the final estimate. In particular, in the following estimates we would not worry about losing  powers of $K$.

\medskip

Without loss of generality, we assume 
\begin{itemize}
\item $s_{2}$ is the strip that $\{(a_{1},a_{2},a_{1}^{2}+a_{2}^{2}) | \, |a_{1}|\leq  N^{2}_{2}/N^{2}_{1}, |a_{2}|\leq {N_{2}/KN_{1}}\}$

\smallskip
\item $s_{2}'$ is the strip that $\{(b_{1},b_{2}, b_{1}^{2}+b_{2}^{2})|\, |b_{1}|\leq N_{2}^{2}/N^{2}_{1}, |b_{2}-CN_{2}/KN_{1}|\leq N_{2}/KN_{1}\}, C\geq 10$. (Here $10$ is of course just some universal constant.)
\end{itemize}

\subsection{Parabolic rescaling}
The next step, parabolic scaling, is standard in decoupling types results; we give the details here for the convenience of the  reader. 

\medskip
 Note $s_{2}, s_{2}'$ lie on the same $\frac{N_2}{N_1}$ cap. We rescale the $\frac{N_2}{N_1}$ cap to radius 1. 
 By a slight abuse of notation, we regard $f_{s_{i}}$ as a function depending only on two variables $(\xi_{i,1}, \xi_{i,2})$.
For convenience notation, we let $h_{1}=f_{s_{2}}, \, h_{2}=f_{s_{2}'}$.
Let also $g_{i}(\eta_{i,1},\eta_{i,2}): =h_{i}((N_{2}/N_{1})\eta_{i,1}, (N_{2}/N_{1})\eta_{i,2})$.
 
Note now
\begin{itemize}
\item $g_{1}$ is supported in the strip of $\{(a_{1},a_{2},a_{1}^{2}+a_{2}^{2}) \, |  \, |a_{1}|\leq  N_{2}/N_{1}, |a_{2}|\leq 1/K\}$
\item $g_{2}$ is supported in the strip of $\{(b_{1},b_{2}, b_{1}^{2}+b_{2}^{2}) \, | \, |b_{1}|\leq N_{2}/N_{1}, |b_{2}-C/K|\leq 1/K\}, C\geq 10$
\end{itemize}
 Note  $g_1$, $g_2$ are supported on a pair of transverse $\frac{N_2}{N_1}\times 1$ strips\footnote{ Strictly speaking, we need them to support on a pair of $ \frac{N_{2}}{N_{1}}\times \frac{1}{100}$ strips, we neglect this technical point here.} due to the non adjacency  of $s_{2},  s_{2}'$.
 We point out here  the transversality between  $g_{1},  g_{2}$ is  not as in the assumption of    Lemma \ref{projection to 2D}, but  it is in the sense of Remark \ref{Ktran}, which usually cause a loss of $K$ in the estimate, but this  does not matter.
  
  The {\it parabolic scaling}  says the following:
  \begin{claim}\label{ps}
  Let  $Eg_{i}(y_{1},y_{2},y_{3})=Eh_{i}(N_{1}/N_{2}y_{1},N_{1}/N_{2} y_{2}, N_{1}^{2}/N_{2}^{2} y_{3})$, and let $D$ be domain in $\RRR^{3}$ and let 
  $$\tilde{D}:=\{(y_{1},y_{2},y_{3}): N_{1}/N_{2}y_{1},N_{1}/N_{2} y_{2}, N_{1}^{2}/N_{2}^{2} y_{3}\in D\},$$ then it follows from standard change of variables technique that the following two estimates, with the  same constant $A$, are equivalent:
  \begin{equation}\label{before}  
 \|Eh_{1} Eh_{2}\|_{L^2_{avg}(w_{D})} \lesssim A\prod_{j=1}^2\left(\sum_{|\theta|=\frac{1}{\lambda N_1}}\|Ef_{s_j,\theta}\|_{L^4_{avg}(w_{D})}^2\right)^{1/2},
  \end{equation}
  \begin{equation}\label{after}
 \|Eg_{1} Eg_{2}\|_{L^2_{avg}(w_{\tilde{D}})} \lesssim A\prod_{j=1}^{2} \left(\sum_{|\tilde{\theta}|=\frac{1}{\lambda N_2}}\|Eg_{j,\tilde{\theta}}\|_{L^4_{avg}(w_{\tilde{D}})}^2\right)^{1/2}.
  \end{equation}
  \end{claim}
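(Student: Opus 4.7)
The plan is to verify Claim~\ref{ps} by a direct change of variables, carried out first in Fourier space (to pass from $h_i$ to $g_i$) and then in physical space (to pass from $D$ to $\tilde D$). First I would spell out the pointwise relation between the extension operators. Using $g_i(\eta) = h_i((N_2/N_1)\eta)$ and substituting $\xi = (N_2/N_1)\eta$ in
\[
Eg_i(y_1,y_2,y_3) = \int e^{-2\pi i(\eta_1 y_1 + \eta_2 y_2 + (\eta_1^2+\eta_2^2)y_3)} g_i(\eta)\,d\eta
\]
yields
\[
Eg_i(y_1,y_2,y_3) = \Big(\frac{N_1}{N_2}\Big)^2 Eh_i\Big(\tfrac{N_1}{N_2}y_1,\tfrac{N_1}{N_2}y_2,\tfrac{N_1^2}{N_2^2}y_3\Big),
\]
so $Eg_i$ and $Eh_i$ agree (up to a fixed scalar that the claim's notation suppresses) along the affine map $T:(y_1,y_2,y_3)\mapsto \bigl((N_1/N_2)y_1,(N_1/N_2)y_2,(N_1/N_2)^2y_3\bigr)$. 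The same identity holds piece-by-piece: each cap $\tilde\theta$ of radius $1/(\lambda N_2)$ on the $\eta$-parabola corresponds under the Fourier dilation to a cap $\theta$ of radius $1/(\lambda N_1)$ on the $\xi$-parabola, so $Eg_{j,\tilde\theta}$ relates to $Ef_{s_j,\theta}$ by the same pointwise formula.

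Next I would compute how each norm transforms. The Jacobian of $T$ equals $(N_1/N_2)^4$, hence $m(\tilde D)=(N_2/N_1)^4 m(D)$, and a direct change of variable in the defining integrals yields
\[
\|Eg_1 Eg_2\|_{L^2_{avg}(w_{\tilde D})} = \Big(\frac{N_1}{N_2}\Big)^4 \|Eh_1 Eh_2\|_{L^2_{avg}(w_D)},
\]
\[
\|Eg_{j,\tilde\theta}\|_{L^4_{avg}(w_{\tilde D})} = \Big(\frac{N_1}{N_2}\Big)^2 \|Ef_{s_j,\theta}\|_{L^4_{avg}(w_D)}.
\]
Taking the bilinear product on the right of \eqref{after} produces a prefactor $(N_1/N_2)^4$ identical to the one produced on the left, so both sides of \eqref{after} scale by the common factor $(N_1/N_2)^4$ relative to \eqref{before}. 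Consequently the two estimates are equivalent with the same constant $A$.

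The only place that needs some care is the choice of weight: I would set $w_{\tilde D}(y):=w_D(Ty)$, which by the axiomatic properties listed in Section~\ref{subsectionweight} (positivity, normalization $\int w_{\tilde D}\sim m(\tilde D)$, and rapid decay off $\tilde D$) is a legitimate weight adapted to $\tilde D$. With this choice every integral transforms with a clean Jacobian and the $(N_1/N_2)^4$ factors cancel cleanly on both sides of the asserted inequality. This bookkeeping is the ``standard change of variables technique'' alluded to in the statement; the mildly technical point, and hence the only obstacle, is verifying that after pulling back the weight one still has a weight with the correct localization properties, which is immediate from the affine nature of $T$.
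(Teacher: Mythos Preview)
Your proposal is correct and is exactly the ``standard change of variables technique'' the paper invokes without further detail. The paper does not supply a proof of Claim~\ref{ps} beyond that phrase, so there is nothing to compare against; your computation of the Jacobian $(N_1/N_2)^4$, the resulting scaling of the $L^2_{avg}$ and $L^4_{avg}$ averaged norms, and the observation that the weight $w_{\tilde D}:=w_D\circ T$ inherits the required localization from the affine map $T$ are precisely the details the paper omits. Your remark that the identity $Eg_i(y)=Eh_i(Ty)$ in the claim suppresses the Fourier--Jacobian factor $(N_1/N_2)^2$ is also accurate, and as you note this constant multiplies both sides of the inequality equally and so does not affect the equivalence.
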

we then concentrate on \eqref{after}.

Take $D=B_{N_{1}^{2}}$, then $\tilde{D}=[0,N_{2}^{2}]\times [0,N_{1}N_{2}]^{2}$.  (Here, without loss of generality, we regard  $B_{N_{1}^{2}}$ as $[0,N_{1}^{2}]^{3}$.) For convenience of notation, we set $\tilde{\Omega}=[0, N_{2}^{2}]\times [0,N_{1}N_{2}]^{2}$. The parabolic rescaling gives 
\begin{lem}\label{reduction1}
Assume  $g_{1},  g_{2}$ are  two general functions defined on the parabola.  Let $g_{1}$ be supported in  a strip of size $N_{2}/N_{1}\times 1$ around $(0,0,0)$, and $g_{2}$ be supported in  a strip of size $N_{2}/N_{1}\times 1$  around $(0,1,1)$. 
If for some constant $A$, one has (for all such $g_{1}, g_{2}$),
\begin{equation}\label{gd}
 \|Eg_{1} Eg_{2}\|_{L^2_{avg}(w_{\tilde{\Omega}})} \lesssim A \left(\sum_{|\tilde{\theta}|=\frac{1}{\lambda N_2}}\|Eg_{j,\tilde{\theta}}\|_{L^4_{avg}(w_{\hat{\Omega}})}^2\right)^{1/2},
\end{equation}
then for the same constant $A$, one has 
\begin{equation}\label{sd}
 \|Ef_{s_2} Ef_{s'_2}\|_{L^2_{avg}(w_{B_{N_{1}^{2}}})} \lesssim K^{C}A\left(\sum_{|\theta|=\frac{1}{\lambda N_1}}\|Ef_{s_2,\theta}\|_{L^4_{avg}(w_{B_{N_{1}^{2}}})}^2\right)^{1/2}\left(\sum_{|\theta|=\frac{1}{\lambda N_1}}\|Ef_{s'_2,\theta}\|_{L^4_{avg}(w_{B_{N_{1}^{2}}})}^2\right)^{1/2}.
\end{equation}
\end{lem}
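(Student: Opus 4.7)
The plan is to invoke Claim~\ref{ps} (parabolic rescaling) to transform the target estimate \eqref{sd} for $h_1 = f_{s_2}$ and $h_2 = f_{s'_2}$ on the ball $B_{N_1^2}$ into an equivalent bilinear estimate for the rescaled functions $g_1, g_2$ on the box $\tilde\Omega$, and then reduce this rescaled estimate to the hypothesis \eqref{gd}. Taking $D = B_{N_1^2}$ in Claim~\ref{ps}, the rescaled domain $\tilde D$ essentially coincides with $\tilde\Omega$ (the slight ball-vs-box mismatch being controlled by the fast-decay weights). Under the rescaling $g_i(\eta_1, \eta_2) := h_i((N_2/N_1)\eta_1, (N_2/N_1)\eta_2)$, caps of scale $1/(\lambda N_1)$ in the $\xi$-variable correspond to caps of scale $1/(\lambda N_2)$ in the $\eta$-variable, matching the caps on the right-hand side of \eqref{gd}. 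Thus \eqref{sd} is equivalent, with the same constant, to a bilinear estimate of the form \eqref{gd} on $Eg_1, Eg_2$.

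The resulting $g_1, g_2$, however, are supported in narrow strips of size $N_2/N_1 \times 1/K$ centered at $(0,0,0)$ and $(0, C/K, (C/K)^2)$ respectively, while the hypothesis \eqref{gd} is formulated for functions supported in the wider strips of size $N_2/N_1 \times 1$ centered at $(0,0,0)$ and $(0,1,1)$. To bridge this I compose an additional paraboloid-preserving change of variables — a Galilean translation in $\eta_2$ together with an isotropic parabolic dilation of order $K$ — that normalizes the pair of centers to $(0,0,0)$ and $(0,1,1)$ while preserving both the paraboloid and the structure of the extension operator. The hypothesis \eqref{gd} then applies to the normalized functions; tracking the order-$K$ rescaling factor through the weighted $L^p_{avg}$ norms, through the cap decomposition of scale $1/(\lambda N_2)$, and through the shape of the rescaled domain produces the multiplicative $K^C$ loss recorded in \eqref{sd}.

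The main obstacle I anticipate is the careful bookkeeping of this second change of variables, specifically verifying that after normalization the domain remains comparable (up to weights) to $\tilde\Omega$ and that the cap-decomposition of scale $1/(\lambda N_2)$ is preserved in a manner consistent with the hypothesis, so that \eqref{gd} can be applied as stated. Once this alignment is established, reversing the composite rescaling — the initial parabolic rescaling from Claim~\ref{ps} together with the subsequent Galilean/parabolic normalization — chains back to \eqref{sd} with the claimed constant $K^C A$ and concludes the proof.
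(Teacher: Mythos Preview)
Your approach is correct and essentially the same as the paper's. The paper presents Lemma~\ref{reduction1} as an immediate consequence of Claim~\ref{ps} (parabolic rescaling with $D=B_{N_1^2}$, so that $\tilde D=\tilde\Omega$), and handles the mismatch between the rescaled centers $(0,0,0)$, $(0,C/K,(C/K)^2)$ and the normalized centers $(0,0,0)$, $(0,1,1)$ by the informal remark that this normalization costs only $K^C$; your Galilean shift plus order-$K$ parabolic dilation is exactly a concrete realization of that remark.
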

\begin{rem}
After rescaling, the relevant $g_{1}, g_{2}$ should be supported around $(0,0,0)$ and $(0, 1/K, 1/K^{2})$ rather than $(0,0,0)$ and $(0,0,1)$. We state our lemma for $g_{1}, g_{2}$ supported around  $(0,0,0)$ and $(0,1,1)$ to be consistent with the statement in Lemma \ref{projection to 2D}. This causes a loss of $K^{C}$, but   we emphasize again that any loss due to a power of $K$ would be irrelevant in the proof.
\end{rem}

We end this section by introducing some notation.

\medskip

Let $g_{1}, g_{2}$ be as in Lemma \ref{reduction1}, we define $A(\lambda, N_{1},N_{2})$ to be the best constant such that
\begin{equation}\label{notegd}
 \|Eg_{1} Eg_{2}\|_{L^2_{avg}(w_{\tilde{\Omega}})} \lesssim A(\lambda, N_{1}, N_{2}) \left(\sum_{|\tilde{\theta}|=\frac{1}{\lambda N_2}}\|Eg_{j,\tilde{\theta}}\|_{L^4_{avg}(w_{\hat{\Omega}})}^2\right)^{1/2}.
\end{equation}
Then we can restate Lemma \eqref{reduction1} .
\begin{lem}\label{tansferlemma}
For $j=1,2$,  we have 
\begin{equation}\label{ssd}
\begin{aligned}
& \|Ef_{s_j} Ef_{s'_j}\|_{L^2_{avg}(w_{B_{N_{1}^{2}}})}\\
  \lesssim 
 &K^{C}A(\lambda, N_{1}, N_{2})\left(\sum_{|\theta|=\frac{1}{\lambda N_1}}\|Ef_{s_j,\theta}\|_{L^4_{avg}(w_{B_{N_{1}^{2}}})}^2\right)^{1/2}\left(\sum_{|\theta|=\frac{1}{\lambda N_1}}\|Ef_{s'_j,\theta}\|_{L^4_{avg}(w_{B_{N_{1}^{2}}})}^2\right)^{1/2}.
 \end{aligned}
\end{equation}
\end{lem}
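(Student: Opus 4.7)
The plan is to view Lemma \ref{tansferlemma} as the quantitative payoff of the parabolic rescaling recorded in Claim \ref{ps}, combined with the very definition of $A(\lambda, N_1, N_2)$ in \eqref{notegd}. First I would set $h_1 := f_{s_j}$ and $h_2 := f_{s_j'}$ and introduce the rescaled functions on the parabola via $g_i(\eta_1,\eta_2) := h_i((N_2/N_1)\eta_1, (N_2/N_1)\eta_2)$; this sends the ambient $(N_2/N_1)$-cap to the unit ball, the $(N_2/N_1) \times (N_2/N_1)^2$ strip structure of $\tau_j$ to a $1 \times (N_2/N_1)$ strip inside the unit ball (i.e., the $N_2/N_1 \times 1$ strip appearing in Lemma \ref{reduction1}), and the $(1/(\lambda N_1))$-cap decomposition of $s_j, s_j'$ to a $(1/(\lambda N_2))$-cap decomposition on the rescaled side.

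Next I would transfer the physical-space estimate. The associated change of variables from Claim \ref{ps} sends $B_{N_1^2}$ to a set comparable to $\tilde{\Omega} = [0, N_2^2] \times [0, N_1 N_2]^2$, and under this map the $L^p_{avg}$ norms with their adapted weights are preserved up to absolute constants, because the Jacobian factor cancels cleanly against the measure normalization built into $\fint$. Applying this identity to both sides of \eqref{ssd}, the estimate to be proven becomes equivalent (modulo constants) to
\begin{equation*}
\|Eg_1 \cdot Eg_2\|_{L^2_{avg}(w_{\tilde{\Omega}})} \lesssim K^C A(\lambda, N_1, N_2) \prod_{i=1,2}\biggl(\sum_{|\tilde{\theta}|=1/(\lambda N_2)} \|Eg_{i,\tilde{\theta}}\|_{L^4_{avg}(w_{\tilde{\Omega}})}^2\biggr)^{1/2},
\end{equation*}
which is exactly what the optimal constant $A(\lambda, N_1, N_2)$ in \eqref{notegd} is designed to provide.

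The one real obstacle is that the rescaled $g_1, g_2$ are supported in strips around $(0,0,0)$ and $(0, C/K, C^2/K^2)$ respectively, whereas the definition \eqref{notegd} of $A(\lambda, N_1, N_2)$ is stated for supports centered at $(0,0,0)$ and $(0,1,1)$. To bridge this gap I would invoke Remark \ref{Ktran}: inspecting the proof of Lemma \ref{projection to 2D}, one sees that the transversality argument only uses that the $e_{d-1}$-components of the two supports are separated by at least $\Omega(1/K)$, and the resulting estimate degrades by at most a fixed power of $K$. This extra factor is absorbed into the $K^C$ appearing in \eqref{ssd}, and pulling the resulting bound back through the inverse change of variables completes the proof. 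I expect the main bookkeeping task — and the only genuine source of potential error — to be verifying that the weighted $L^p_{avg}$ norms transform cleanly on both sides and that no hidden $\lambda$ or $N_1$ factors arise from the anisotropy of the weight function $w$.
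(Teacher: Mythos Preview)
Your proposal is correct and follows precisely the approach taken in the paper: Lemma \ref{tansferlemma} is stated there as a direct restatement of Lemma \ref{reduction1} once the optimal constant $A(\lambda,N_1,N_2)$ from \eqref{notegd} is plugged in, and Lemma \ref{reduction1} in turn is nothing more than Claim \ref{ps} applied with $D=B_{N_1^2}$, $\tilde D=\tilde\Omega$, together with the $K^C$ accounting from Remark \ref{Ktran}. Your write-up simply unpacks these ingredients in the natural order, and the bookkeeping concern you flag (clean transformation of weighted $L^p_{avg}$ norms) is exactly the content of the parabolic-rescaling claim, so no hidden factors arise.
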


\subsection{The induction procedure}\label{sectioninductionpro}
\subsubsection{Before induction}
Now we are ready to start the induction for the proof of Lemma \ref{criticallll}.  We emphasize here the induction is on $N_{2}$, (though mixed with induction on $K$). Note we are now in dimension $d=2$.

We need to show that  for all $1\leq N_{2}\leq N_{1}$ and $\lambda \leq N_{1}$,  one  has $$K (\lambda,N_{1},N_{2} )\lesssim N_{2}^{\epsilon}(\frac{1}{\lambda}+\frac{N_{2}}{N_{1}})^{1/2}.$$
Note the base case $N_{2}=1$ is already established in Corollary \ref{base case}.  And with Lemma \ref{InductionN2}, we need only to perform induction  until $\lambda=N_{2}/N_{1}$. 
 
 We will work on $A(\lambda, N_{1}, N_{2})$ defined in \eqref{notegd} to explore the transversality between nonadjacent strips.  The induction process is two fold in some sense. We will induct on $N_{2}$ to better understand $K(\lambda, N_{1}, N_{2})$, and in turn we find more information about $A(\lambda, N_{1}, N_{2})$, which in turn gives a better understanding of $K(\lambda ,N_{1}, N_{2})$.

This is a final   summary before we start the induction. Recall, we have  \eqref{preinduction} and  \eqref{firred}, thus we have
\begin{equation}\label{finalinduce}
 \begin{aligned}
               &\int |Ef_1 Ef_2|^2 w_{B_{N_{1}^{2}}}\\
 \lesssim  &\int \sum_{s_{j}, s_{j}^{' }nonadj}\int |Ef_{s_{1}}Ef_{s_{1}'}Ef_{s_{2}}Ef_{s_{2}'}|w_{B_{N_{1}^{2}}}\\
 +& \int_{s_{1}, s_{2}}\int |Ef_{s_{1}}Ef_{s_{2}}|w_{ B_{N_{1}^{2}}}.
 \end{aligned}
 \end{equation}
 Also recall  that $s_{1}, s_{1}' ,s_{2}, s_{2}'$ are all  $(N_{2}/N_{1})^{2}\times N_{2}/KN_{1}$ strips.
 The second term can be  easily handled by direct induction, (which is not the main point of the induction procedure explained later). Indeed, if there were only the  second term in  \eqref{finalinduce}, since $s_{1}, s_{2}$ are both contained in caps of radius $(N_{2}/KN_{1})$, then \eqref{finalinduce} already reduces the decoupling problem for  $f_{i}$ supported in caps of size $N_{2}/N_{1}$ into the decoupling problem for $f_{i}$ supported in caps of size $N_{2}/KN_{1}$, which reduce $N_{2}$ to $N_{2}/K$.

We will focus on the first term of \eqref{finalinduce}. H\"older inequality gives
\begin{equation}\label{baseindcution}
\int |Ef_{s_1}Ef_{s_{1}'}Ef_{s_{2}}Ef_{s_{2}'}|w_{B_{N_{1}^{2}}}\leq \prod_{j=1}^{2}\left(\int  |Ef_{s_{j}}Ef_{s_{j}'}|^2w_{B_{N_{1}^{2}}}\right)^{1/2}.
\end{equation}
Estimate \eqref{baseindcution} is the start point of the analysis in the following Subsections.

We summarize in the lemma below how \eqref{baseindcution} and  \eqref{finalinduce} come together to highlight the relevance of $A(N_{1}, N_{2}, \lambda)$ in the induction procedure.
\begin{lem}\label{finalinductionlemma}
When $\lambda \leq N_{1}/N_{2}$ and $\lambda \leq N_{1}$, we have
\begin{equation}\label{fikey}
K(N_{1}, N_{2}, \lambda)\lesssim K^{C}\frac{1}{\lambda}A(N_{1}, N_{2},\lambda)+K(N_{1}, N_{2}/K, \lambda)
\end{equation}
\end{lem}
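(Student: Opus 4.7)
The plan is to start from \eqref{finalinduce} and treat its two sums separately: the transversal piece
\[
I := \sum_{\tau_{1},\tau_{2}}\sum_{(s_{1},s_{1}'),(s_{2},s_{2}')\text{ nonadj}}\int |Ef_{s_{1}}Ef_{s_{1}'}Ef_{s_{2}}Ef_{s_{2}'}|\, w_{B_{N_{1}^{2}}},
\]
which I expect to control by the rescaled constant $A(\lambda,N_{1},N_{2})$ through Lemma~\ref{tansferlemma}, and the near-diagonal piece
\[
II := \sum_{s_{1},s_{2}}\int |Ef_{s_{1}}Ef_{s_{2}}|^{2}\, w_{B_{N_{1}^{2}}},
\]
which I would bound by invoking the definition \eqref{best2} at the finer cap scale $N_{2}/K$.

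For $I$, I would first apply Cauchy--Schwarz inside the integral (as in \eqref{baseindcution}) to factor the four-fold product into $\prod_{j=1}^{2}\left(\int |Ef_{s_{j}}Ef_{s_{j}'}|^{2}\, w\right)^{1/2}$, and then invoke Lemma~\ref{tansferlemma} on each factor to insert the constant $K^{C}A(\lambda,N_{1},N_{2})$. I would then sum over the non-adjacent pairs $(s_{j},s_{j}')$ via Cauchy--Schwarz---using that there are only $O(K)$ sub-strips $s_{j}\subset \tau_{j}$, so $(\sum_{s_{j}} a_{s_{j}})^{2}\leq K\sum_{s_{j}}a_{s_{j}}^{2}$---and then over $\tau_{1},\tau_{2}$, collapsing $\sum_{s_{j}\subset\tau_{j}}\sum_{\theta\subset s_{j}}\|Ef_{s_{j},\theta}\|^{2}$ back to $\sum_{\theta\subset\tau_{j}}\|Ef_{j,\theta}\|^{2}$. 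This produces the overall bound
\[
I \lesssim m(B_{N_{1}^{2}})\,K^{2C'}A(\lambda,N_{1},N_{2})^{2}\prod_{j=1}^{2}\sum_{|\theta|=1/(\lambda N_{1})}\|Ef_{j,\theta}\|_{L^{4}_{avg}(w_{B_{N_{1}^{2}}})}^{2}.
\]
Comparing with the definition \eqref{best2} of $K(\lambda,N_{1},N_{2})$ and extracting $\lambda^{d/2}=\lambda$ (since $d=2$) gives a contribution $K^{C'}A(\lambda,N_{1},N_{2})/\lambda$ to $K(\lambda,N_{1},N_{2})$.

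For $II$, each summand couples functions supported in transversal sub-caps of common radius $\sim (N_{2}/K)/N_{1}$, so invoking the definition \eqref{best2} at scale $N_{2}/K$ would yield
\[
\int |Ef_{s_{1}}Ef_{s_{2}}|^{2} w \lesssim m(B_{N_{1}^{2}})\,\lambda^{d}\,K(\lambda,N_{1},N_{2}/K)^{2}\prod_{j=1}^{2}\sum_{|\theta|=1/(\lambda N_{1})}\|Ef_{s_{j},\theta}\|_{L^{4}_{avg}}^{2}.
\]
Summing over all $s_{1},s_{2}$ telescopes the right side to $\prod_{j}\sum_{\theta}\|Ef_{j,\theta}\|_{L^{4}_{avg}}^{2}$, so that $II$ contributes $K(\lambda,N_{1},N_{2}/K)$ to $K(\lambda,N_{1},N_{2})$. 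Combining the two contributions via $\sqrt{a^{2}+b^{2}}\leq a+b$ then produces \eqref{fikey}.

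The hard part will be the justification in step $II$: strictly, the definition \eqref{best2} of $K(\lambda,N_{1},N_{2}/K)$ requires $f_{s_{2}}$ to be supported in a cap centered near $|\xi|\sim (N_{2}/K)/N_{1}$, whereas $s_{2}\subset \tau_{2}$ can sit anywhere in $|\xi|\lesssim N_{2}/N_{1}$. To close the induction I would need to observe that $s_{1}$ and $s_{2}$ remain transversal (their centers are separated by $\Theta(1)$, which is much larger than the common radius $(N_{2}/K)/N_{1}$) and that the relevant bilinear decoupling constant really depends only on the cap radii and the magnitude of this separation---a reading implicit in the setup of Proposition~\ref{transversal} but one that must be verified carefully before the recursion may be closed.
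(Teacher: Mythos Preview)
Your argument is correct and follows essentially the same route as the paper: split \eqref{finalinduce} into the non-adjacent term $I$ and the near-diagonal term $II$, treat $I$ via Cauchy--Schwarz and Lemma~\ref{tansferlemma} to produce $K^{C}A(\lambda,N_{1},N_{2})/\lambda$, and handle $II$ by invoking the defining constant at scale $N_{2}/K$. The paper's own write-up of this step is rather terse (and the display \eqref{finalinduce2} already substitutes a particular value for $A$), so your version is in fact more explicit.

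Your caveat about $II$ is a legitimate technical point that the paper also glosses over: strictly, the constant $K(\lambda,N_{1},N_{2}/K)$ in \eqref{best2} is defined for $f_{2}$ supported in a cap of radius $(N_{2}/K)/N_{1}$ centered at $|\xi|\sim (N_{2}/K)/N_{1}$, whereas the sub-strip $s_{2}$ sits at some $|\xi|\lesssim N_{2}/N_{1}$. As you note, this is harmless because every estimate feeding into $K(\lambda,N_{1},N_{2})$---Lemma~\ref{projection to 2D}, Corollary~\ref{Tcor}, and the parabolic rescaling---depends only on the cap radii and on the separation $|\xi_{1}-\xi_{2}|\sim 1$ between the centers of the two caps, not on the precise location of the low-frequency cap (cf.\ Remark~\ref{Ktran}). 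One may therefore take $K(\lambda,N_{1},N_{2})$ to be the supremum of the best constant in \eqref{best2} over all such admissible placements, and the induction closes verbatim.
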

Note that the assumption of Lemma \ref{finalinductionlemma} always holds during the induction procedure to prove Lemma \ref{criticallll}.
\begin{proof}[Proof of Lemma \ref{finalinductionlemma}]
 Applying Lemma \ref{tansferlemma}, we have
 \begin{equation}\label{ssd2}
\begin{aligned}
& \|Ef_{s_j} Ef_{s'_j}\|_{L^2_{avg}(w_{B_{N_{1}^{2}}})}\\
  \lesssim 
 &K^{C}A(N_{1}, N_{2}, \lambda)\left(\sum_{|\theta|=\frac{1}{\lambda N_1}}\|Ef_{s_j,\theta}\|_{L^4_{avg}(w_{B_{N_{1}^{2}}})}^2\right)^{1/2}\left(\sum_{|\theta|=\frac{1}{\lambda N_1}}\|Ef_{s'_j,\theta}\|_{L^4_{avg}(w_{B_{N_{1}^{2}}})}^2\right)^{1/2}.
 \end{aligned}
\end{equation}

Plugging \eqref{ssd2} into  \eqref{baseindcution}, and then plugging into \eqref{finalinduce}, we derive
\begin{equation}\label{finalinduce2}
 \begin{aligned}
               &\|Ef_1 Ef_2|\|_{l^{2}(w_{B_{N_{1}^{2}}})}\\
 \lesssim  &K^{C}\lambda \left(\frac{1}{\lambda}\right)^{1/2}\prod_{i=1}^{2}\left(\sum_{|\theta|=\frac{1}{\lambda N_{1}}}
 \|Ef_{i,\theta}\|_{L^4_{avg}(w_{B_{N_{1}^{2}}})}^{2}\right)^{1/2}\\
 +& \left(\sum_{|\theta|=\frac{N_{2}}{\lambda KN_{1}}}
 \|Ef_{i,\theta}\|_{L^4_{avg}(w_{B_{N_{1}^{2}}})}^{2}\right)^{1/2}.
 \end{aligned}
 \end{equation}

 Thus we derive 
 \begin{equation}
 \lambda K(N_{1},N_{2},\lambda )\lesssim  K^{C}A(N_{1}, N_{2}, \lambda)+\lambda K(N_{1}, N_{2}/K, \lambda)
 \end{equation}
 Thus, Lemma \ref{finalinductionlemma} follows.

\end{proof}
Now we are ready to start with  the induction procedure on $N_{2}$. We emphasize again that by Lemma \ref{InductionN2}
we only need to consider the case  $\lambda \leq N_{1}/N_{2}$.

\subsubsection{First induction: Case $N_{2}^{2}\leq N_{1}$}\label{firstinductionsection}
It will become clear in the following  proof why we  choose the first splitting point at $N_{1}=N_{2}^{2}$.  We start with an estimate for $A(\lambda, N_{1}, N_{2})$.
We have 
\begin{lem}\label{lemmaA}
When $N_{2}\leq N_{1}^{2}$,  $\lambda \leq N_{1}$, $\lambda \leq N_{1}/N_{2}$,
\begin{equation}
 A(\lambda, N_{1}, N_{2})\lesssim \lambda^{1/2}\equiv \lambda \lambda^{-1/2}.
 \end{equation}
\end{lem}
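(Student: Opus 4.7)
The plan is to exploit the bilinear transversality between the supports of $g_1$ (around $(0,0,0)$) and $g_2$ (around $(0,1,1)$) --- exactly the setup of Lemma~\ref{projection to 2D} and Corollary~\ref{Tcor} --- combined with Cauchy--Schwarz and the parallel decoupling principle of Lemma~\ref{parallel}. The main gain beyond the naive bound should come from the fact that the supports are strips of dimensions $(N_2/N_1)\times 1$, not full unit caps.

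First, I would tile $\tilde\Omega=[0, N_2^2]\times[0, N_1 N_2]^2$ by balls $B_R$ of radius $R=N_2^2$, the smallest coordinate scale of $\tilde\Omega$. On each $B_R$, apply Corollary~\ref{Tcor} with $v=1$ and $\delta=1/(\lambda N_2)$; this is admissible provided $R\ge\delta^{-1}$, i.e.\ $\lambda\le N_2$, and the case $\lambda>N_2$ should be reduced to this regime by an initial parabolic rescaling in the spirit of Claim~\ref{ps}. This yields
\begin{equation*}
\int |Eg_1 Eg_2|^2 w_{B_R}\lesssim_{\epsilon}(\lambda N_2)^{d-1}N_2^{\epsilon}\sum_{|\theta_j|=1/(\lambda N_2)}\int |Eg_{1,\theta_1}Eg_{2,\theta_2}|^2 w_{B_R}.
\end{equation*}
Then Cauchy--Schwarz $\|Eg_{1,\theta_1}Eg_{2,\theta_2}\|_{L^2_{avg}}\le \|Eg_{1,\theta_1}\|_{L^4_{avg}}\|Eg_{2,\theta_2}\|_{L^4_{avg}}$ on each cap pair, summation over the pairs, and passage from $B_R$ to $\tilde\Omega$ via Lemma~\ref{parallel} produces the naive output $A(\lambda, N_1, N_2)\lesssim (\lambda N_2)^{1/2}N_2^{\epsilon}$ in dimension $d=2$, which is off from the target $\lambda^{1/2}$ by a factor of $N_2^{1/2}$.

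To remove the extra $N_2^{1/2}$, one must use that $\mathrm{supp}\,g_j$ is a strip of dimensions $(N_2/N_1)\times 1$: only $(\lambda N_2)\cdot(N_2/N_1)$ subcaps of radius $1/(\lambda N_2)$ are needed along the narrow direction, rather than the $\lambda N_2$ that would populate a full unit cap. An anisotropic variant of Corollary~\ref{Tcor}, obtained by rerunning the proof of Lemma~\ref{projection to 2D} with strip-shaped supports in place of circular caps (the argument only uses the transversality in the distinguished coordinate, which is unaffected by the shape of the supports), should save a factor of $N_2/N_1$ in the decoupling constant, giving
\begin{equation*}
A(\lambda, N_1, N_2)\lesssim \bigl(\lambda N_2 \cdot (N_2/N_1)\bigr)^{1/2}N_2^{\epsilon}=\lambda^{1/2}\,(N_2^2/N_1)^{1/2}N_2^{\epsilon}\lesssim\lambda^{1/2}N_2^{\epsilon}
\end{equation*}
under the hypothesis (read in its effective form $N_2^2\le N_1$).

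The main obstacle I anticipate is making the anisotropic refinement rigorous: in particular, correctly bookkeeping the number of subcaps in the multiplicity-controlled Cauchy--Schwarz step (the analog of Lemma~\ref{Testimate}) for strip-shaped rather than disk-shaped supports. A secondary difficulty is the reduction of the case $\lambda > N_2$ to $\lambda \le N_2$, since for $\lambda > N_2$ the ball-radius constraint $R\ge\delta^{-1}$ fails on balls tiling $\tilde\Omega$ in the time direction and a preliminary rescaling argument is needed.
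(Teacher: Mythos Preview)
Your proposal has the right idea for the range $\lambda\le N_2$: the strip shape of $\mathrm{supp}\,g_j$ does save a factor over the naive cap count in the Cauchy--Schwarz step, and your anisotropic refinement can be made rigorous---though not by literally taking $v=1$ in Corollary~\ref{Tcor}, since the iteration $v\to v^2$ in its proof degenerates there and the factor $|\log\delta/\log v|^C$ is infinite. The paper organizes this differently: it first runs the $L^4$-orthogonality argument behind Lemma~\ref{projection to 2D} directly on the strips to decouple the transversal direction losslessly down to scale $1/N_2$ (this is where the hypothesis $N_2^2\le N_1$ enters, since it forces the strip width $N_2/N_1\le 1/N_2$ so the pieces land in genuine $1/N_2$-caps), and then for each resulting cap pair it invokes the already-proven base estimates with new parameters $N_1'=N_2$, $N_2'=N_2^2/N_1\lesssim 1$. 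In the range $\lambda\le N_2=N_1'$ that second stage is exactly Lemma~\ref{base case}, yielding the factor $\lambda^{-1/2}$.

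The genuine gap is the range $\lambda>N_2$, which is non-vacuous here (the hypotheses allow $\lambda$ up to $N_1/N_2>N_2$). Parabolic rescaling will not close it: the obstruction is that the time-side $N_2^2$ of $\tilde\Omega$ is shorter than the target Heisenberg scale $\lambda N_2$, and any parabolic rescaling preserves this ratio---indeed, it is precisely the transformation that produced $\tilde\Omega$ in the first place. The paper handles this range by invoking Lemma~\ref{big lambda} (the $\tilde K$-estimate, valid for $\lambda\ge N_1'$) on the rectangular domain $[0,N_2^2]\times[0,\lambda N_2]^2$ and then tiling $\tilde\Omega$ by translates of that box via Lemma~\ref{parallel}. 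The crucial ingredient inside Lemma~\ref{big lambda} is not Corollary~\ref{Tcor} but the $L^2/L^\infty$ interpolation \eqref{min}, which lets one pass below the ball scale dictated by the uncertainty principle; it produces the bound $(N_2/N_1)^{1/2}$, and this is $\le\lambda^{-1/2}$ precisely because of the standing assumption $\lambda\le N_1/N_2$.
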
 
Assuming Lemma \ref{lemmaA} for the moment, let us finish the proof of Lemma \ref{criticallll}  when  $N_{1}\geq N_{2}^{2}$.
Applying Lemma  \ref{lemmaA} with Lemma  \ref{finalinductionlemma}, we derive
\begin{equation}
 K(N_{1},N_{2},\lambda )\lesssim K^{C}\lambda \left(\frac{1}{\lambda}\right)^{1/2}+K(N_{1}, N_{2}/K, \lambda)
 \end{equation}
 when $N_{1}\geq N_{2}^{2}$  and $\lambda\leq N_{1}/N_{2}$.
 Choosing $1\ll K\sim N_{2}^{\epsilon^{10}}$, performing  induction on $N_{2}$ again, and recalling that the case  $N_{2}\lesssim 1$ is covered by Lemma \ref{base case}, then Lemma \ref{criticallll} follows when $N_{1}\geq N^{2}_{2}$.

\medskip
Now, we turn to the proof of Lemma \ref{lemmaA}.
 \begin{proof}[Proof of Lemma \ref{lemmaA}]
 Since $N_{1}\leq N_{2}^{2}$, thus $\frac{N_{2}}{N_{1}}\leq \frac{1}{N_{2}}$.   (It is exactly because of this  that we decided our first splitting point $N_{1}\leq N_{2}^{2}$). Thus, the support of $g_{1}, g_{2}$ appearing  in \eqref{notegd} are (contained in) strips of size $\frac{1}{N_{2}}\times 1$. Thus, in a ball of radius $N_{2}^{2}$,  we have 
 \begin{equation}\label{projectto2dagain}
 \int |Eg_{1}Eg_{2}|w_{B_{N_{2}^{2}}}\lesssim \sum_{|\theta_{i}|=\frac{1}{N_{2}}, \theta_{i}\subset supp \,  g_{i}} \int |E_{g_{1},\theta_{1}}Eg_{2,\theta_{2}}|w_{B_{N_{2}^{2}}}.
 \end{equation}
 The proof of \eqref{projectto2dagain} is essentially the same as the proof of Lemma \ref{projection to 2D} and we leave it to reader.
 
 Note one can use balls $B_{N_{2}^{2}}$ to cover $\tilde{\Omega}:=[0,N_{2}^{2}]\times [0,N_{1}N_{2}]^{2}$, (since $N_{1}\geq N_{2}$) thus we extend \eqref{projectto2dagain} to
 \begin{equation}\label{ddd1}
  \int |Eg_{1}Eg_{2}|w_{{\tilde{\Omega}}}\lesssim \sum_{|\theta_{i}|=\frac{1}{N_{2}}, \theta_{i}\subset supp \, g_{i}} \int |E_{g_{1},\theta_{1}}Eg_{2,\theta_{2}}|w_{{\tilde{\Omega}}}.
 \end{equation}
 
We claim for any fixed $\theta_{1}, \theta_{2}$, one has 
\begin{equation}\label{ddd4}
 \|Eg_{1,\theta_{1}}Eg_{2, \theta_{2}}\|_{L^{2}(w_{\tilde{\Omega}})}\lesssim  \lambda \lambda^{-1/2}\prod_{i=1}^{2}
 \left(\sum_{\tilde{\theta_{i}}\subset \theta_{i}, \|\tilde{\theta_{i}}=\frac{1}{\lambda N_{2}}\|}\|Eg_{i, \tilde{\theta_{i}}}\|_{L^{4}(w_{\tilde{\Omega}})}\right)^{1/2}
\end{equation} 
Plugging \eqref{ddd4} into \eqref{ddd1}, we have
\begin{equation}
A(N_{1},N_{2}, \lambda)\lesssim \lambda (\frac{1}{\lambda})^{1/2}.
\end{equation}  
 and the Lemma \ref{lemmaA} follows.
 
 Now we are left with the proof of \eqref{ddd4}. Let $N_{1}'=N_{2}, N_{2}'=N_{2}^{2}/N_{1}\lesssim 1$.
When $ N_{1}'=N_{2}\leq \lambda$, recall the definition of  $\tilde{K}(\lambda ,N_{1}, N_{2})$ in \eqref{best3} and apply Lemma \ref{big lambda}, we have 
\begin{equation}\label{ddd3}
\begin{aligned}
K(N_{1}', N_{2}', \lambda)\lesssim (N'_{2})^{\epsilon}\left(\frac{N'_{2}}{N_{1}'}\right)^{1/2}\lesssim \left(\frac{1}{\lambda}+\frac{N_{2}}{N_{1}}\right)^{1/2}\lesssim \lambda^{-1/2}.
\end{aligned}
\end{equation} 
The last inequality in \eqref{ddd3} follows because we always have $\lambda\leq N_{1}/N_{2}$ in the whole induction process.
Note \eqref{ddd3} implies
\begin{eqnarray} \label{ddd2}
&& \|Eg_{1,\theta_{1}}Eg_{2, \theta_{2}}\|_{L^{2}(w_{[0,N_{2}^{2}]\times [0,\lambda N_{2}]^{2}})}\\\notag
&\lesssim&  \lambda \tilde{K}(N_{1}' ,N_{2}', \lambda) \prod_{i=1}^{2}
 \left(\sum_{\tilde{\theta_{i}}\subset \theta_{i}, \|\tilde{\theta_{i}}=\frac{1}{\lambda N_{2}}\|}\|Eg_{i, \tilde{\theta_{i}}}\|_{L^{4}(w_{[0,N_{2}^{2}]\times [0,\lambda N_{2}]^{2}})}\right)^{1/2}.
 \end{eqnarray}

Since $\lambda\leq N_{1}$, (which is also always the case during the induction process ), $\tilde{\Omega}$ can be covered by the translations of $[0,N_{2}^{2}]\times [0,\lambda N_{2}]$, thus \eqref{ddd2} implies \eqref{ddd4} by the parallel decoupling Lemma \ref{parallel}.

When $\lambda\leq N_{1}'$, since $N'_{2}\lesssim 1$, by Lemma \ref{base case}, we have 
\begin{equation}
K(\lambda, N'_{1}, N_{2}')\lesssim \lambda^{-1/2}.
\end{equation}
Thus,
\begin{equation} \label{ddd5}
 \|Eg_{1,\theta_{1}}Eg_{2, \theta_{2}}\|_{L^{2}(w_{B_{N_{2}^{2}}})}\lesssim  \lambda \lambda^{-1/2} \prod_{i=1}^{2}
 \left(\sum_{\tilde{\theta_{i}}\subset \theta_{i}, \|\tilde{\theta_{i}}=\frac{1}{\lambda N_{2}}\|}\|Eg_{i, \tilde{\theta_{i}}}\|_{L^{4}(w_{B_{N_{2}^{2}}})}\right)^{1/2}.
 \end{equation}
 Since one can use $B_{N_{2}^{2}}$ and its translations to cover $\tilde{\Omega}$, \eqref{ddd5} implies \eqref{ddd4} by the parallel decoupling Lemma \ref{parallel}.

 \end{proof}

\subsubsection{Second induction: Case $N_2^{3/2} \leq N_1\leq N_2^2$ }\label{secondind}
\begin{lem}\label{LemmaA2}
When $N_{2}^{3/2}\leq N_{1}\leq N_{2}^{2}$,  $\lambda \leq N_{1}$ and  $\lambda \leq N_{1}/N_{2}$, we have 
\begin{equation}\label{criticalestimate}
 A(\lambda, N_{1}, N_{2})\lesssim \lambda^{1/2}\equiv \lambda \lambda^{-1/2}.
 \end{equation}
\end{lem}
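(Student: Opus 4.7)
The approach I would take is to mirror the proof of Lemma~\ref{lemmaA}, but replace the use of Lemmas~\ref{big lambda}/\ref{base case} (which required $N_2^2/N_1\lesssim 1$) by the induction hypothesis on $N_2$. The key observation driving this plan is that in the current regime $N_2^{3/2}\leq N_1\leq N_2^2$, the reduced parameter $N_2':=N_2^2/N_1$ satisfies $1\leq N_2'\leq N_2^{1/2}<N_2$, so the $K$-estimate at parameters $(N_1'=N_2,\,N_2'=N_2^2/N_1)$ is available by induction on $N_2$ (Lemma~\ref{criticallll} for smaller $N_2$).

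First, I would decouple each strip $g_i$ (of size $\tfrac{N_2}{N_1}\times 1$) into $\sim N_1/N_2$ caps $\theta_i$ of diameter $\tfrac{N_2}{N_1}$, obtained by partitioning the length direction. An $L^2$-orthogonality argument in the spirit of Lemma~\ref{projection to 2D} applied with $v=N_2/N_1$ should give a (nearly) loss-free bilinear decoupling
\[
\int |Eg_1\,Eg_2|^2 w_{\tilde\Omega}\lesssim \sum_{\theta_1,\theta_2}\int |Eg_{1,\theta_1}Eg_{2,\theta_2}|^2 w_{\tilde\Omega},
\]
because the constraints arising from Fourier-support matching on the paraboloid (analogous to \eqref{t1}) force $|\xi_{1,2}-\xi_{3,2}|\lesssim (N_2/N_1)^2$, strictly smaller than the grid spacing $N_2/N_1$, so distinct pairs cannot interact.

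Next, for each fixed pair $(\theta_1,\theta_2)$, after a Galilean translation the bilinear norm is in the configuration controlled by $K(\lambda,N_2,N_2^2/N_1)$: the cap radius $\tfrac{N_2^2/N_1}{N_2}=\tfrac{N_2}{N_1}$ matches the size of $\theta_i$, the sub-cap scale $\tfrac{1}{\lambda N_2}$ matches that of $\tilde\theta$, and $B_{N_2^2}$ may be used to cover $\tilde\Omega$ via parallel decoupling (Lemma~\ref{parallel}). Applying the induction hypothesis and using $\lambda\leq N_1/N_2$ to absorb $\tfrac{N_2}{N_1}\leq \tfrac{1}{\lambda}$ yields
\[
K(\lambda,N_2,N_2^2/N_1)\lesssim (N_2')^{\epsilon}\Bigl(\tfrac{1}{\lambda}+\tfrac{N_2}{N_1}\Bigr)^{1/2}\lesssim N_2^{\epsilon}\lambda^{-1/2}.
\]
Combining with the $\lambda^{d/2}=\lambda$ prefactor in the $K$-definition, each pair contributes $\lambda\cdot K\lesssim N_2^{\epsilon}\lambda^{1/2}$, and summing through the no-loss decoupling gives $A(\lambda,N_1,N_2)\lesssim N_2^{\epsilon}\lambda^{1/2}$, as required by \eqref{criticalestimate}.

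The main obstacle I expect is justifying the (essentially) loss-free $L^2$-orthogonality in the first step uniformly. The denominator $(\xi_{1,2}+\xi_{3,2})-(\xi_{2,2}+\xi_{4,2})$ appearing in the constraint derivation is generically of order $1$, but it degenerates near the interface $\xi_2\approx 1$ where the two strips touch; one must either restrict to a well-separated buffer region or accept a logarithmic loss absorbable into the $N_2^{\epsilon}$ factor. A secondary bookkeeping issue is aligning the domains $B_{N_2^2}$ and $\tilde\Omega=[0,N_2^2]\times[0,N_1 N_2]^2$, which is handled by parallel decoupling since $N_1 N_2\geq N_2^2$ in our regime.
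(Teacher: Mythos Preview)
Your proposal is correct and follows essentially the same route as the paper: decouple the $\tfrac{N_2}{N_1}\times 1$ strips into caps of radius $\tfrac{N_2}{N_1}$ via the transversality argument of Lemma~\ref{projection to 2D}, then apply the previously established $K$-estimate at the reduced parameters $(N_1',N_2')=(N_2,\,N_2^2/N_1)$, noting that $N_1'\geq (N_2')^2$ precisely when $N_1\geq N_2^{3/2}$ so that the first-induction case of Lemma~\ref{criticallll} applies. Your worry about the denominator $(\xi_{1,2}+\xi_{3,2})-(\xi_{2,2}+\xi_{4,2})$ degenerating is not a real obstacle: the supports of $g_1,g_2$ are separated in the $e_2$-direction by $\sim 1/K$ (they are centered near $0$ and $C/K$ with $C\geq 10$), so the loss is only $K^C$, which is absorbed as in Remark~\ref{Ktran}; also note that since $N_1\leq N_2^2$ forces $\lambda\leq N_1/N_2\leq N_2=N_1'$, the large-$\lambda$ branch (Lemma~\ref{big lambda}) that the paper writes out is in fact not needed in this regime.
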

Clearly, using  Lemma \ref{finalinductionlemma} and arguing  as   in Section \ref{firstinductionsection}, Lemma \ref{criticallll} follows from Lemma \ref{LemmaA2} when $N_{2}^{3/2}\leq N_{1}\leq N_{2}^{2}$.

Now we are left with proof of Lemma \ref{LemmaA2},  i.e.  the estimate \eqref{criticalestimate}.
We will prove  that estimate \eqref{criticalestimate}, in case $N_{2}^{3/2}\leq N_{1}\leq N_{2}^{2}$,  follows from the fact that Lemma \ref{criticallll} holds when $N_{2}^{2}\geq N_{1} $, (given Lemma \ref{big lambda}).

\begin{proof}[Proof of Lemma \ref{LemmaA2}]
The proof starts similarly as the proof of Lemma \ref{lemmaA}, note now we have $N_{2}/N_{1}\geq 1/N_{2}$. As we derived \eqref{projectto2dagain}, we have in a ball of radius $N_{1}^{2}/N^{2}_{2}$,
\begin{equation}
 \int |Eg_{1}Eg_{2}|w_{B_{(\frac{N_{1}}{N_{2}})^{2}}}\lesssim \sum_{|\theta_{i}|=\frac{N_{2}}{N_{1}}, \theta_{i}\subset supp \, g_{i}} \int |E_{g_{1},\theta_{1}}Eg_{2,\theta_{2}}|w_{B_{(N_{1}/N_{2})^{2}}}.
\end{equation}
Note one can use $B_{(\frac{N_{1}}{N_{2}})^{2}}$ and its translations to cover $\tilde{\Omega}$, thus we have
\begin{equation}\label{eee1}
  \int |Eg_{1}Eg_{2}|w_{{\tilde{\Omega}}}\lesssim \sum_{|\theta_{i}|=\frac{N_{2}}{N_{1}}, \theta_{i}\subset supp \, g_{i}} \int |E_{g_{1},\theta_{1}}Eg_{2,\theta_{2}}|w_{{\tilde{\Omega}}}.
 \end{equation}
 
 The following procedure is essentially the same as in the first induction. Note that to prove \eqref{criticalestimate} we only need to further show that for fix $\theta_{1}, \theta_{2}$,
 \begin{equation}\label{eee2}
 \|Eg_{1,\theta_{1}}Eg_{2, \theta_{2}}\|_{L^{2}(w_{\tilde{\Omega}})}\lesssim  \lambda \lambda^{-1/2}\prod_{i=1}^{2}
 \left(\sum_{\tilde{\theta_{i}}\subset \theta_{i}, |\tilde{\theta_{i}}|=\frac{1}{\lambda N_{2}}}\|Eg_{i, \tilde{\theta_{i}}}\|_{L^{4}(w_{\tilde{\Omega}})}\right)^{1/2},
\end{equation} 
where  now $|\theta_{i}|=\frac{N_{2}}{N_{1}}$.

Let $N_{1}'=N_{2}$, $N_{2}'=\frac{N_{2}^{2}}{N_{1}}$, note we have $N_{1}'\geq (N'_{2})^{2}$ since $N_{1}\geq N_{2}^{3/2}$. When $\lambda \geq N'_{1}$,
we have by Lemma \ref{big lambda} 
\begin{equation}\label{eee3}
\begin{aligned}
 \|Eg_{1,\theta_{1}}Eg_{2, \theta_{2}}\|_{L^{2}(w_{[0,N_{2}^{2}]\times [0,\lambda N_{2}]^{2}})}\lesssim  \lambda \left(\frac{N_{2}'}{N_{1}'}\right)^{-1/2}\prod_{i=1}^{2}
 \left(\sum_{\tilde{\theta_{i}}\subset \theta_{i}, |\tilde{\theta_{i}}|=\frac{1}{\lambda N_{2}}}\|Eg_{i, \tilde{\theta_{i}}}\|_{L^{4}(w_{[0,N_{2}]\times [0,\lambda N_{2}]^{2}})}\right)^{1/2}.
 \end{aligned}
\end{equation}
Since one can use $[0,N_{2}^{2}]\times [0,\lambda N_{2}]^{2}$ to cover $\tilde{\Omega}$, \eqref{eee2} follows from \eqref{eee3},
(note $\frac{N_{2}'}{N_{1}'}=\frac{N_{2}}{N_{1}}\leq \lambda^{-1}$).

When $\lambda\leq N_{1}'$, since one can use $B_{N_{2}^{2}}$ to cover  $\tilde{\Omega}$, to prove \eqref{eee2}, we need only to show
\begin{equation}\label{eee4}
\begin{aligned}
 \|Eg_{1,\theta_{1}}Eg_{2, \theta_{2}}\|_{L^{2}(w_{B_{N_{2}^{2}}})}\lesssim  \lambda \lambda^{-1/2}\prod_{i=1}^{2}
 \left(\sum_{\tilde{\theta_{i}}\subset \theta_{i}, \|\tilde{\theta_{i}}=\frac{1}{\lambda N_{2}}\|}\|Eg_{i, \tilde{\theta_{i}}}\|_{L^{4}(w_{B_{N_{2}^{2}}})}\right)^{1/2},
 \end{aligned}
\end{equation}
which is equivalent to $K(N_{1}', N_{2}', \lambda)\leq \frac{1}{\lambda}$. But recall that $ N_{1}'\geq (N'_{2})^{2}$, thus this is exactly what we proved in first induction, i.e. Lemma \ref{criticallll} holds when $N_{1}\geq N_{2}^{2}$.
\end{proof}

\subsubsection{Later inductions and the conclusion of the induction process} 
Recall that the {\it first induction} covers the case $N_{1}\geq N_{2}^{2}$ and the {\it second inductions} covers the case $N_{2}^{\alpha}\leq N_{1}\leq N_{2}^{2} , \alpha=3/2$. The goal now  is to use  induction to cover the case $N_{2}^{\alpha}\leq N_{1}$, all the way to $  \alpha=1$.
The arguments here are similar to those for  the {\it second induction} presented  in Section \ref{secondind}.  Let $N'_{1}=N_{2}, N_{2}'=N_{2}^{2}/N_{1}$, then $N_{1}'\geq (N'_{2})^{\alpha}$ is equivalent to $N_{1}\geq N_{2}^{\frac{2\alpha-1}{\alpha}}$.
Once we show that  Lemma \ref{criticallll} holds when $N_{2}^{\alpha}\leq  N_{1}\leq N^{2}_{2}$, we would be able to extend Lemma \ref{LemmaA2}  to the case when $N_{2}^{\frac{2\alpha-1}{\alpha}}\leq N_{1}$, which in turn proves that  Lemma \ref{criticallll} holds when $N_{2}^{\frac{2\alpha-1}{\alpha}}\leq N_{1}\leq N_{2}^{2}$.
The induction would not end until $\alpha=1$. 
We finally point out, that only an  induction with finite steps is  involved.

To show Lemma \ref{criticallll} for a fixed  $\epsilon_{0}$, we may  pick an $\tilde{\epsilon}\ll \epsilon_{0}$, then we perform the induction for $\tilde{\epsilon}$ as above. 

After we prove Lemma \ref{criticallll} for $N_{1}\geq N_{2}^{1+\tilde{\epsilon}}$, we are left with the case $N_{1}\leq N_{2}^{1+\tilde{\epsilon}}$ .  We  first  use H\"older inequality to shrink the size of the cap from $N_{2}/N_{1}$ to $N_{2}^{1-2\tilde{\epsilon}}/N_{1}$, which only gives a loss of  $N_{2}^{C\tilde{\epsilon}}\ll N_{2}^{\epsilon_{0}}$. Then we use Lemma \eqref{criticallll} in the case $N_{1}\geq N_{2}^{1+\tilde{\epsilon}}$ again.

Thus, Lemma \ref{criticallll}  holds for all the case for our fixed $\epsilon_{0}$.

\subsection{The high dimension case}
To handle the case $d\geq 3$, we are left with the proof of Lemma \ref{base case high}. The proof is indeed similar to previous  arguments in  this section and easier. The proof relies on the linear decoupling estimate in \cite{bourgain2014proof}.

As aforementioned, applying  Lemma \ref{projection to 2D}, taking $v=N_{2}/N_{1}$ and  $R=N^{2}_{1}$, we can  decouple the $\frac{N_2}{N_1}$ caps into $(\frac{N_2}{N_1},\frac{N_2^2}{N_1^2})$ plates without any loss, i.e. \eqref{preinduction}. However, since we are in the case $\lambda\leq N_{1}/N_{2}^{2}$, indeed $N_{2}^{2}/N_{1}^{2}\lesssim \frac{1}{\lambda N_{1}}$, we only need a weaker version of \eqref{preinduction}, i.e. we only want to decouple the $\frac{N_2}{N_1}$ caps into $(\frac{N_2}{N_1},\frac{1}{\lambda N_{1}})$ plates:
\begin{equation}\label{preinduction2}
\int |Ef_1 Ef_2|^2 w_{B_{N_{1}^{2}}}  \lesssim \sum_{\tau_1, \tau_2}\int |Ef_{1,\tau_1} Ef_{2,\tau_2}|^2 w_{B_{N_{1}}^{2}} 
\end{equation}
Here $\tau_{i}$ are $(\frac{N_{2}}{N_{1}}, \frac{1}{\lambda N_1})$ plates as described in Lemma \ref{projection to 2D}. Note \eqref{preinduction2} follows from \eqref{preinduction}.

Now, for each $\tau_{i}$ fixed, we further decouple $\tau_{i}$ into $(\frac{1}{N_{1}}, \frac{1}{\lambda N_{1}})$ plates via linear decoupling in \cite{bourgain2014proof}, here recalled in \eqref{decouple}. Note direct application of linear decoupling in dimension $d$ gives us 
\begin{equation}\label{lineardecoupleaux}
\|Ef_{\tau_{i}}\|_{L^{4}(w_{B_{N_{1}^{2}}})}\lesssim N_{2}^{\epsilon}(N_{2}^{2})^{\frac{d}{4}-\frac{d+2}{8}}\left(\sum_{v_{i}\subset \tau_{i}} \|Ef_{v_{i}}\|_{L^{4}(w_{B_{N_1^2}})}^{2}\right)^{1/2},
\end{equation}  

However,   we are  able to use \eqref{decouple} when the  dimension is $d-1$ rather than $d$, because our plates are so thin (of scale $\frac{1}{\lambda N_{1}}\leq \frac{1}{N_{1}}$), which reduce the dimension by 1.
{
Indeed, Linear decoupling  \eqref{decouple} not only work for those functions which are exactly supported in parabola $P$ but also those which are supported in a $N_{1}^{-2}$ neighborhood of $P$. This is consistent in uncertainty principle, since in physical space we of scale $N_{1}^{2}$, in frequency space any scale of $N_{1}^{-2}$ cannot be differentiated. Since our plate are so thin, of scale $\frac{1}{\lambda N_{1}}\leq N^{-2}_{1}$, one  could indeed view it as a $N_{1}^{-2}$ neighborhood of some $d-1$ dimensional parabola. To be more specific and use $\tau_{2}$ as example, since $\tau_2$ is supported at the origin. Let $\pi_{t}^{-1}(\tau_2)$ be the pull back image of $\tau_2$ to the paraboloid.  The fourier inverse transform of $Ef_{\tau_2}$ is supported on $ \pi_{t}^{-1}(\tau_2)$.  One can see that if we project along $x_1$--axis, the projection image of $\pi_{t}^{-1}(\tau_2)$ is the $(\frac{1}{\lambda N_1})^2$--neighborhood of a $(d-1)$--dimensional paraboloid (a piece of length $\frac{N_2}{N_1}$).
}

Now, apply $d-1$ dimensional linear decoupling, we improve \eqref{lineardecoupleaux} into
\begin{equation}\label{lineardecouple}
\|Ef_{\tau_{i}}\|_{L^{4}(w_{B_{N_{1}^{2}}})}\lesssim N_{2}^{\epsilon}(N_{2}^{2})^{\frac{d-1}{4}-\frac{d+1}{8}}\left(\sum_{v_{i}\subset \tau_{i}} \|Ef_{v_{i}}\|_{L^{4}(w_{B_{N_1^2}})}^{2}\right)^{1/2},
\end{equation}
where  $v_{i}$ are $(\frac{1}{N_{1}}, \frac{1}{\lambda N_{1}})$ plates.

Finally, similarly to the derivation of \eqref{min},  we decouple $v_{i}$ into caps of radius $\frac{1}{\lambda N_{1}}$,
\begin{equation}\label{direct}
\|Ef_{v_{i}}\|^{4}_{L^{4}(w_{B_{N_{1}^{2}}})}\lesssim \lambda^{(d-1)} \left(\sum_{\theta_{i}\subset v_{i}}\|Ef_{\theta_{i}}\|_{L^{4}(w_{B_{N_{1}^{2}}})}^{2}\right)^{2}.
\end{equation}
We remark that   each $v_{i}$ can  be coved by $\lambda ^{d-1}$ rather than $\lambda^{d}$ caps of radius $\frac{1}{\lambda N_{1}}$.
Plugging \eqref{direct} into \eqref{lineardecouple}, then plugging it  into \eqref{preinduction2}, we derive
\begin{equation}
\|Ef_{1} Ef_{2}\|_{L^2_{avg}(w_{B_{N_{1}^{2}}})} \leq \lambda^{d-1/2}N_{2}^{\frac{d-3}{2}} \prod_{j=1}^2\left(\sum_{|\theta|=\frac{1
 }{\lambda N_1}}\|Ef_{j,\theta}\|_{L^4_{avg}(w_{B_{N_{1}^{2}}})}^2\right)^{1/2}.
\end{equation}
Thus, the desired estimate for $K(\lambda, N_{1}, N_{2})$ follows.

\appendix

\section{Sharpness of Theorem~\ref{bilinear decoupling} and Theorem~\ref{Main Theorem} }\label{sharpness}

The sharpness (up to $N_2^{\epsilon}$) of Theorem~\ref{bilinear decoupling}  is provided by the following examples.  One can also re-scale those example to show the sharpness of Theorem \ref{Main Theorem}.

We take $Ef_1 =\underset{\xi \in \Lambda_{\lambda N_1}, |\xi|\leq \frac{N_2}{N_1}}{\sum} e^{2\pi i (\xi \cdot x +|\xi|^2t)}$ and $f_2 =f_1 (\cdot - (1,0,\dots, 0))$.  Then $|Ef_1|$ is about $(\lambda N_2)^d$ at $B(0,\frac{N_1}{N_2})$ in $\RRR^{d+1}$. Note that it follows from uncertainty principle, it is locally constant in  any ball of size $\frac{N_{1}}{N_{2}}$ and one can easily compute $|Ef_{1}(0)|\sim (\lambda N_{2})^{d}$ . Also note $|Ef_1|$ has periodicity around $\lambda N_1$ in all components of $x$, (not necessarily in $t$).  The same is true for $|Ef_2|$.  Thus,
\begin{align*}
\|Ef_1Ef_2\|_{L^2(w_{\Omega})}^2 &\gtrsim (\lambda N_2)^{4d} |B(0, \frac{N_1}{N_2})| (\lambda N_1)^d\\
&\gtrsim \lambda^{5d} N_1^{2d+1} N_2^{3d-1}
\end{align*}
Each cap $\theta_j$ of radius $\frac{1}{\lambda N_1}$ contains at most one point $\xi \in \Lambda_{\lambda N_1}$. Hence $\|Ef_{j, \theta_j}\|_{L^{4}(w_{\Omega})}^4\lesssim |\Omega| = N_1^2 (\lambda N_1)^{2d}$. 
\begin{align*}
\Pi_{j=1}^2 (\sum_{|\theta_j|=\frac{1}{\lambda N_1}} \|Ef_{j,\theta_j}\|_{L^4(w_{\Omega})}^2) & \lesssim (\lambda N_2)^{2d}  N_1^2 (\lambda N_1)^{2d} \\
&\lesssim \lambda^{4d} N_1^{2d+2} N_2^{2d}
\end{align*}
This example shows that the term with $\frac{N_2^{d-1}}{N_1}$ is sharp for both $d=2$ and $d\geq 3$. 

When $d=2$, we consider the example when 
\begin{align*}Ef_1&=\underset{\xi\in \Lambda_{\lambda N_1}, \xi_1=1, |\xi_2|\leq \frac{1}{N_1}}{\sum} e^{2\pi i (\xi \cdot x+|\xi|^2 t)}\\
Ef_2&=\underset{\xi\in \Lambda_{\lambda N_1}, \xi_1=0, |\xi_2|\leq \frac{1}{N_1}}{\sum} e^{2\pi i (\xi \cdot x+|\xi|^2 t)}.
\end{align*}

$|Ef_1|$ is about $\lambda$ in the box of height $N_1^2$ (i.e. the $t$ direction), width $N_1$, (i.e the $x_{2}$ direction) and length $(\lambda N_1)^2$, (i.e. the $x_{1}$ direction) centered at origin. $|Ef_2|$ is the same size in the same box. Moreover, $Ef_1$ and $Ef_2$ both have periodicity around $\lambda N_1$ in $x_2$. 
\begin{align*}
\|Ef_1Ef_2\|_{L^2(w_{\Omega})}^2 &\gtrsim \lambda^4 N_1^2 \cdot N_1 \cdot (\lambda N_1)^2  \cdot \lambda N_1\\ 
&\gtrsim \lambda^7 N_1^6
\end{align*}

As calculated previously, $\|Ef_{j,\theta_j}\|_{L^4(w_{\Omega})}^4 =|\Omega|$. 
\begin{align*}
\Pi_{j=1}^2 (\sum_{|\theta_j|=\frac{1}{\lambda N_1}}\|Ef_{j, \theta_j}\|_{L^{4}(w_{\Omega})}^2) &\lesssim \lambda^2\cdot |\Omega| \\
&\lesssim \lambda^6 N_1^6.
\end{align*}
This example shows that when $d=2$, the term with $\frac{1}{\lambda}$ is sharp. 

When $d\geq 3$, we consider the example when 
\begin{align*}Ef_1&=\underset{\xi\in \Lambda_{\lambda N_1}, \xi_1=1, |(\xi_2,\dots, \xi_d)|\leq \frac{N_2}{N_1}}{\sum} e^{2\pi i (\xi \cdot x+|\xi|^2 t)}\\
Ef_2&=\underset{\xi\in \Lambda_{\lambda N_1}, \xi_1=0, |(\xi_2,\dots, \xi_d)|\leq \frac{N_2}{N_1}}{\sum} e^{2\pi i (\xi \cdot x+|\xi|^2 t)}.
\end{align*}

Notice that we construct the example in $d\geq 3$ differently, the support of $f_j$ is in a thin plate of radius $\frac{N_2}{N_1}$ instead of the $\frac{1}{N_1}$ as in $2$--dimensional example. 

$|Ef_1|$ is about $(\lambda N_2)^{d-1}$ in a box  of size $(\frac{N_1}{N_2}) \times\cdots \times \frac{N_1}{N_2} \times (\frac{N_1}{N_2})^2 \times (\lambda N_1)^2$.  $|Ef_2|$ is about $(\lambda N_2)^{d-1}$ in the same  box. Both $Ef_1$ and $Ef_2$ has periodicity around $\lambda N_1$ in $x_2, \dots ,x_d$--directions. 

\begin{align*}
\|Ef_1Ef_2\|_{L^2(w_{\Omega})}^2 &\gtrsim (\lambda N_1)^{4(d-1)} (\frac{N_1}{N_2} )^{d+1} (\lambda N_1)^2 (\lambda N_1)^{d-1} \\
&\gtrsim \lambda^{5d-3} N_1^{2d+2} N_2^{3d-5} 
\end{align*}
\begin{align*}
\Pi_{j=1}^2 (\sum_{|\theta_j|=\frac{1}{\lambda N_1}}\|Ef_{j, \theta_j}\|_{L^{4}(w_{\Omega})}^2) &\lesssim (\lambda N_2)^{2(d-1)}\cdot |\Omega| \\
&\lesssim \lambda^{4d-2} N_1^{2d+2} N_2^{2d-2}
\end{align*}
This example shows that when $d\geq 3$, the term with $\frac{N_2^{d-3}}{\lambda}$ is sharp.

\bibliographystyle{abbrv}
\bibliography{BG}

\begin{thebibliography}{10}

\bibitem{bourgain1993fourier}
J.~Bourgain.
\newblock Fourier transform restriction phenomena for certain lattice subsets
  and applications to nonlinear evolution equations.
\newblock {\em Geometric and Functional Analysis}, 3(3):209--262, 1993.

\bibitem{bourgain1998refinements}
J.~Bourgain.
\newblock Refinements of {S}trichartz inequality and applications to {2D-NLS}
  with critical nonlinearity.
\newblock {\em International Mathematics Research Notices}, 1998(5):253--283,
  1998.

\bibitem{bourgain2013moment}
J.~Bourgain.
\newblock Moment inequalities for trigonometric polynomials with spectrum in
  curved hypersurfaces.
\newblock {\em Israel Journal of Mathematics}, 193(1):441--458, 2013.

\bibitem{bourgain2014proof}
J.~Bourgain and C.~Demeter.
\newblock The proof of the {$L^{2}$} decoupling conjecture.
\newblock {\em Annals of mathematics}, 182(1):351--389, 2015.

\bibitem{bourgain2016study}
J.~Bourgain and C.~Demeter.
\newblock A study guide for the {$ L^ 2$} decoupling theorem.
\newblock {\em arXiv preprint arXiv:1604.06032}, 2016.

\bibitem{colliander2002almost}
J.~Colliander, M.~Keel, G.~Staffilani, H.~Takaoka, and T.~Tao.
\newblock Almost conservation laws and global rough solutions to a nonlinear
  {S}chr{\"o}dinger equation.
\newblock {\em Mathematical Research Letters}, 9(5), 2002.

\bibitem{de2006global}
D.~De~Silva, N.~Pavlovic, G.~Staffilani, and N.~Tzirakis.
\newblock Global well-posedness for a periodic nonlinear schrodinger equation
  in 1d and 2d.
\newblock {\em DYNAMICAL SYSTEMS}, 19(1):37--65, 2007.

\bibitem{Deng2016growth}
Y.~Deng and P.~Germain.
\newblock Growth of solution of {NLS} on irrational tori.
\newblock {\em Private communication}.

\bibitem{Deng2016str}
Y.~Deng, P.~Germain, and L.~Guth.
\newblock Strichartz estimates on irrational tori.
\newblock {\em Private communication}.

\bibitem{guo2014strichartz}
Z.~Guo, T.~Oh, and Y.~Wang.
\newblock Strichartz estimates for {S}chr{\"o}dinger equations on irrational
  tori.
\newblock {\em Proceedings of the London Mathematical Society}, page pdu025,
  2014.

\bibitem{hani2012bilinear}
Z.~Hani.
\newblock A bilinear oscillatory integral estimate and bilinear refinements to
  {S}trichartz estimates on closed manifolds.
\newblock {\em Analysis {\&} PDE}, 5(2):339--363, 2012.

\bibitem{herr2011global}
S.~Herr, D.~Tataru, N.~Tzvetkov, et~al.
\newblock Global well-posedness of the energy-critical nonlinear
  {S}chr{\"o}dinger equation with small initial data in
  {$H^{1}(\mathbb{T}^{3})$}.
\newblock {\em Duke Mathematical Journal}, 159(2):329--349, 2011.

\bibitem{ionescu2012energy}
A.~D. Ionescu, B.~Pausader, et~al.
\newblock The energy-critical defocusing {NLS} on {$\mathbb{T}^{3}$}.
\newblock {\em Duke Mathematical Journal}, 161(8):1581--1612, 2012.

\bibitem{killip2016scale}
R.~Killip and M.~Visan.
\newblock Scale-invariant strichartz estimates on tori and applications.
\newblock {\em Mathematical Research Letters}, 23(2):445--472, 2016.

\bibitem{ramos2016trilinear}
J.~Ramos.
\newblock The trilinear restriction estimate with sharp dependence on the
  transversality.
\newblock {\em arXiv preprint arXiv:1601.05750}, 2016.

\end{thebibliography}
\end{document}